\theoremstyle{plain}
\newtheorem{thm}{Theorem}[section]
\newtheorem{lemma}[thm]{Lemma}
\newtheorem{cor}[thm]{Corollary}
\theoremstyle{definition}
\newtheorem{defn}[thm]{Definition}
\newtheorem*{defn*}{Definition}
\newtheorem{example}[thm]{Example}
\theoremstyle{remark}
\newtheorem{rmk}[thm]{Remark}
\newtheorem*{rmk*}{Remark}
\newcommand{\N}{\field{N}}
\newcommand{\Z}{\field{Z}}
\newcommand{\Q}{\field{Q}}
\newcommand{\R}{\field{R}}
\newcommand{\C}{\field{C}}
\newcommand{\ideal}[1]{\mathfrak{#1}}
\newcommand{\m}{\ideal{m}}
\newcommand{\n}{\ideal{n}}
\newcommand  {\shE}     {\mathcal{E}}
\newcommand  {\shF}     {\mathcal{F}}
\newcommand  {\shG}     {\mathcal{G}}
\newcommand  {\shM}     {\mathcal{M}}
\newcommand  {\shN}     {\mathcal{N}}
\newcommand  {\shL}     {\mathcal{L}}
\newcommand  {\shQ}     {\mathcal{Q}}
\newcommand  {\shS}     {\mathcal{S}}
\newcommand  {\dual}    {\vee}
\newcommand  {\im}      {\operatorname{im}}
\renewcommand  {\ker }  {\operatorname{kern}}
\newcommand  {\koker }  {\operatorname{cokern}}
\renewcommand{\O}       {\mathcal{O}}
\newcommand  {\Proj}    {\operatorname{Proj}}
\newcommand  {\Spec}    {\operatorname{Spec}}
\newcommand  {\Syz}     {\operatorname{Syz}}
\newcommand{\oplusdots}{ \oplus \ldots \oplus }
\newcommand{\timesdots}{ \times \ldots \times }
\renewcommand{\N}{{\mathbb N}}
\renewcommand{\Z}{{\mathbb Z}}
\renewcommand{\Q}{{\mathbb Q}}
\renewcommand{\R}{{\mathbb R}}
\renewcommand{\C}{{\mathbb C}}
\newcommand{\nesenum}{{\operatorname{N}^1}}
\newcommand{\hkmodulefct}{\operatorname{HKF}}
\newcommand{\hikmodulefct}{\operatorname{HKF^{i}}}
\newcommand{\hkmodulemult}{\operatorname{HK}}
\newcommand{\hikmodulemult}{\operatorname{HK^{i}}}
\newcommand{\at}{a}
\newcommand{\aat}{b}
\newcommand{\aet}{c}
\newcommand{\surface}{{S}}
\author{Holger Brenner}
\title{Irrational Hilbert-Kunz multiplicities}
\date{\today}
\begin{document}

\large

\begin{abstract}
We interpret Hilbert-Kunz theory of a graded ring of positive characteristic in terms of Frobenius asymptotic of cohomology
of vector bundles on projective varieties.
With this method we show that for almost all prime numbers there exist three-dimensional quartic hypersurface domains and artinian
modules with
irrational Hilbert-Kunz multiplicity. From this we deduce that also
the Hilbert-Kunz multiplicity of a local noetherian domain
might be an irrational number.
\end{abstract}

\keywords{Hilbert-Kunz multiplicity, vector bundle, line bundle, ample cone, Frobenius, local cohomology}

\subjclass[2010]{13 A 35, 14 J 60, 14J28, 14J50}

\maketitle

\section*{introduction}

Let $(R,\m)$ be a local noetherian ring containing a field $K$ of positive characteristic $p$. For an ideal $I$ and a prime power $q=p^e$
we define $I^{[q]}= (f^q, f \in I)$ to be the extended ideal under the $e$th iteration of the Frobenius homomorphism.
If $I$ is primary to the maximal ideal $\m$ (i.e. they have the same radical), then also $I^{[q]}$ has this property and so
$R/I^{[q]}$ is supported on $\m$ and has finite length. E. Kunz first studied in \cite{kunzhilbertkunz} the function
\[ e \longmapsto   \lg (R/I^{[q]}) \]
and observed in examples that it grows of order $c q^{\dim R}$.
In \cite{monskyexists}, P. Monsky proved in general that the limit
\[ e_{HK}(I) =  \lim_{e \rightarrow \infty}  \frac{ \lg( R/ I^{[q]}   ) }{q^ {\dim R} } \]
exists as a real number; it is called the \emph{Hilbert-Kunz multiplicity} of $I$,
and the Hilbert-Kunz multiplicity of $\m$ is also
called the \emph{Hilbert-Kunz multiplicity} of $R$.
In that paper, Monsky suspected that the Hilbert Kunz multiplicity of an $\m$-primary ideal is always a rational number.
This problem has been a driving force in Hilbert-Kunz theory ever since.
Positive results on this problem
(and other questions) were obtained by several authors in many specific situations and with very different methids:
for Fermat type equations  (\cite{hanmonsky}, \cite{gesselmonsky}), cubic equations in three variables (\cite{buchweitzchen}),
binomial equations (\cite{conca}),
monoid rings (\cite{watanabetoric}),
invariant rings (\cite{watanabeyoshida2}),
rings of finite (Frobenius-)Cohen-Macaulay type (\cite{seibertfinitetype}),
two-dimensional graded rings (\cite{brennerrational}, \cite{trivedirational}).
However, Monsky conjectures in \cite{monskyirrational} that the Hilbert-Kunz multiplicity of
\[  {\mathbb Z}/(2)[X,Y,Z,U,V]/( UV+X^3+Y^3+XYZ )\]
is $ \frac{4}{3}  + \frac{5}{14 \sqrt{7} }$, hence irrational. This conjecture is open, though it is
strongly supported by computer computations.

In this paper we give an explicit example of a homogeneous hypersurface ring of degree four and of dimension three
and with an isolated singularity
such that there exists a (not so explicit) module of finite length $M$ such that the limit
$   \lim_{e \rightarrow \infty} \frac{  \lg \left( F^{e*} M \right)  }{q^3}$ 
is an irrational number (Theorem \ref{moduleartinianirrational}).
From this we deduce that there exists also a local noetherian domain whose Hilbert-Kunz multiplicity is irrational
(Theorem \ref{hilbertkunzirrational}).
These results are obtained by combining geometric, cohomological and algebraic methods.

This paper extends the study of Hilbert-Kunz theory in the graded case with the help of
vector bundles on the corresponding projective
variety to higher dimensions (meaning ring dimension $\geq 3$ and dimension of the projective variety $\geq 2$).
This method was initiated in \cite{brennertightprojective}, \cite{brennerslope}, \cite{brennerelliptic},
\cite{brennertightplus} in the context of tight closure theory and in
\cite{brennerrational} and independently in \cite{trivedirational}
for Hilbert-Kunz theory with the focus on the case of two-dimensional rings.
This approach using semistability properties of syzygy bundles on the corresponding projective curves
settled most problems in Hilbert-Kunz theory for graded normal two-dimensional rings: rationality,
limit behavior for $p \rightarrow \infty$ (\cite{trivedimodp}),
boundedness and periodicity of the constant term (\cite{brennerfunction}), relation to solid closure (\cite{brennersolid}),
and allowed also results for non-normal rings (\cite{trivedisingular}, \cite{monskyirreducible}, \cite{monskynodal}).

The basic observation of this approach is that one may express the Hilbert-Kunz multiplicity of a primary homogeneous ideal
(or a graded module of finite length) in a standard-graded domain $R$ over an algebraically closed field $K$ of positive characteristic $p$
with a formula involving the limit
\[ \lim_{e \rightarrow \infty}   \frac{ \sum_{m=0}^\infty \dim_K (H^d( Y, (F^{e*} \shS) (m))) }{p^{e (d +1) } }  \]
and simpler terms, where $Y=\Proj R$ is the corresponding projective variety of dimension $d$,
$\shS$ is the top (dimensional) syzygy bundle and $F^{e*}$ denotes the Frobenius pull-back
(Theorem \ref{hilbertkunzgeometric}).
For $d=1$, this term is controlled by the strong Harder-Narasimhan filtration of $\shS$ (see \cite[Theorem 2.7]{langer}).
The main difference in higher dimension
is that with stability conditions one can only control the zeroth and the top-dimensional sheaf cohomology of a vector bundle,
but not the intermediate cohomology.
This problem is apparent already for line bundles, independent of the question whether they can occur as a direct summand of a top
syzygy bundle or not.
Therefore it is natural to study first for line bundles how the dimensions of the cohomology groups of the twists
of their Frobenius pull-backs behave.

As Frobenius pull-backs for line bundles are just ordinary powers,
it turns out that we are dealing with a \-- to some extent \--
characteristic-free situation and that an extremely useful tool is available:
the interpretation of the intersection behavior of line bundles in terms of the numerical N\'{e}ron-Severi group $\nesenum (Y)$,
its linearization  $\nesenum (Y) \otimes_\Z \R$
and the convex geometry of the intrinsically defined ample, positive, (pseudo)effective and similar cones.
This approach was developed by S. L. Kleiman in
\cite{kleimannumerical} (following ideas of A. Grothendieck and D. Mumford \cite{mumfordcurve})
and is now ubiquitous in algebraic geometry, in particular in the minimal model program
(\cite{kollarrational}, \cite{kollarmori}, \cite{lazarsfeldpositivity1}, \cite{lazarsfeldpositivity2}).

The possible irrational boundaries of these cones were used by S. D. Cut\-kos\-ky in \cite[Example 1.6]{cutkoskyzariski}
to give an example of a divisor on a threefold such that no pull-back of it under any birational transformation has a
Zariski decomposition. He later applied this method also to problems coming from commutative algebra.
In \cite{cutkoskyirrational}, he gave an example showing that the Castelnuovo-Mumford regularity of powers of an ideal sheaf
can have an irrational limit,
and it was his talk about that paper at MSRI in March 2013 which was a starting point for the current work.
There are several sources in algebraic geometry where irrational boundaries occur. The
paper \cite{cutkoskyirrational} builds on \cite{morrisonk3} where it was shown that
any lattice together with a quadratic form fulfilling certain natural necessary conditions can be realized as
the N\'{e}ron-Severi group of an algebraic $K3$ surface over the complex numbers $\C$. 

In this paper we will also use $K3$ surfaces having a certain intersection behavior.
In his recent paper  \cite{oguisoentropy}, K. Oguiso produced
an example of a $K3$ surface $\surface$ over $\mathbb C$
where the automorphism group is large in the sense that there exists a fixpoint free automorphism
such that the corresponding homomorphism on the second singular cohomology $H^2 (\surface, {\mathbb C})$
has an eigenvalue whose absolute value is larger than $1$.
In \cite{festigarbagnatigeemenluijk}, the authors established a relationship between this example with
work of Cayley \cite{cayleymemoir} and reinterpreted it in terms of a determinantal equation given by a $4 \times 4$-matrix
whose entries are linear polynomials in four variables.

Here we will look in general at homogeneous quartic polynomials in four variables (mostly over $\Z$)
given as the determinant of a $4\times 4$-matrix with linear entries and defining a smooth projective surface $\surface$.
By work of A. Beauville \cite{beauvilledeterminantal}, we know that these surfaces have Picard rank at least two 
and we know how the intersection form on the subgroup
given by the very ample class and a certain determinantal curve looks like. Concrete examples where
the Picard rank is in fact two were established in \cite{festigarbagnatigeemenluijk} in characteristic two and in characteristic zero.
It follows from the shape of the intersection form
and the existence of the Cayley-Oguiso automorphism
that for almost all prime numbers
the ample cone of such a surface equals
the effective cone (up to closure and restricted to the plane spanned by the two mentioned divisors)
and that it has irrational boundaries (Lemma \ref{ampleeffectivesublattice}).
From this we deduce the existence of line bundles $\shL$ on $\surface$
such that the limits
\[ \lim_{e \rightarrow \infty} \frac{\sum_{m \in \N} \dim_K (H^2(\surface, \shL^{p^e}   (m) ) }{p^{3e} } \, \text{ and } \,
\lim_{e \rightarrow \infty} \frac{\sum_{m \in \Z} \dim_K (H^1(\surface, \shL^{p^e}   (m) ) }{p^{3e} }  \]
are irrational square roots of rational numbers (Corollary \ref{ampleeffectivepositive} and Corollary \ref{oguisoirrational}).

However, this does not give directly an example of irrational Hilbert-Kunz multiplicity for a primary ideal in a local ring, 
since it is not clear whether one can realize the line bundle as a direct summand of a top syzygy bundle of such an ideal.
Instead, we translate the line bundle back to a graded module over the homogeneous coordinate ring $R$.
This is an invertible module on the punctured spectrum $\Spec R \setminus \{R_+\}$ and not at all artinian.
From the geometric results we get that the
second local cohomology (with support in the maximal ideal) of its Frobenius pull-back has an irrational limit
(Corollary \ref{module2irrational}).
Now these local cohomological variants of Hilbert-Kunz theory were recently studied by H. Dao and I. Smirnov in \cite{daosmirnov},
and their results allow us to deduce from the mentioned irrational behavior the existence of a (non-primary) ideal whose
zeroth local cohomology has irrational Frobenius asymptotic (Corollary \ref{module0irrational})
and the existence of a module of finite length with irrational Hilbert-Kunz multiplicity (Theorem \ref{moduleartinianirrational}).

In the final step we show how to construct starting from a module of finite length with irrational Hilbert-Kunz multiplicity
also a local ring with irrational Hilbert-Kunz multiplicity (Theorem \ref{reductionideal}, Theorem \ref{reductionmaximal},
Theorem \ref{hilbertkunzirrational}).

We give a quick overview of the organization of this paper. In Section \ref{hilbertkunzmodule} we recall the different notions
of Hilbert-Kunz multiplicities including the recent local cohomological variants introduced in \cite{daosmirnov}. In
Section \ref{geometric} we express the Hilbert-Kunz function for a graded module of finite length
over a standard-graded domain with the top cohomology of Frobenius pull-backs and its twists of a
top syzygy bundle (Theorem \ref{hilbertkunzgeometric}).
These syzygy bundles do not necessarily have to stem from a minimal graded
resolution of the module, the requirement is just that the complex is exact on the punctured spectrum.
In Section \ref{resolution} we show for hypersurfaces in ${\mathbb P}^3$
that there are such resolutions with small ranks.

As the Frobenius-asymptotical cohomological behavior of the syzygy bundles is still
quite difficult to control, we focus further on the case of line bundles. In Section \ref{amplethreshold}
we introduce the ample and the antiample threshold of a line bundle on a smooth projective variety and compute
in Lemma \ref{antiamplethreshold} the relevant asymptotic of an antiample line bundle
in terms of its antiample threshold under the condition that the effective cone equals the ample cone up to closure. 
In Corollary \ref{ampleeffectivepositive} we specify this  showing that for a projective surface with the property that this
cone has irrational boundaries we will get an irrational Frobenius-asymptotical behavior of second cohomology of a line bundle.
In Section \ref{splitting} we apply the previous results in the case where the top syzygy bundle splits into line bundles
(Theorem \ref{hilbertkunzsplitting});
apart from the case where the module has finite projective dimension we can establish this behavior for all graded artinian
modules over quadrics in four variables and prove the (known) rationality of the Hilbert-Kunz multiplicity in Corollary
\ref{quadricrational}.

The following sections are devoted to establishing examples with irrational Hilbert-Kunz multiplicities. In
Section \ref{determinantalquartic}
we study, based on \cite{beauvilledeterminantal}, \cite{oguisoentropy} and \cite{festigarbagnatigeemenluijk}, determinantal quartics
and show in Lemma \ref{ampleeffectivesublattice} and 
Corollary \ref{oguisoirrational} that there are indeed examples where the looked-for irrational behavior for line bundles occurs.
These results are translated in Section \ref{interpretation} back to commutative rings. In
Corollary \ref{module2irrational} we establish that the second local cohomological Hilbert-Kunz multiplicity might be irrational;
from this we deduce using results of \cite{daosmirnov} that also the zeroth local cohomological Hilbert-Kunz multiplicity 
(Corollary \ref{module0irrational}) and also the Hilbert-Kunz multiplicity of a module of finite length
(Theorem \ref{moduleartinianirrational})
might be irrational. In the final Section \ref{reductions} we show independent of previous results how
one can construct starting from an artinian module with irrational Hilbert-Kunz multiplicity first a local ring with a primary ideal
having  irrational Hilbert-Kunz multiplicity (Theorem \ref{reductionideal}) and how to construct from this a local ring
with  irrational Hilbert-Kunz multiplicity (Theorem \ref{reductionmaximal}).
Theorem \ref{hilbertkunzirrational} finally gives the existence of irrational Hilbert-Kunz multiplicities.

I thank D. Cutkosky for his inspiring talk about \cite{cutkoskyirrational} and a subsequent conversation, H. Dao for
indicating important reduction steps using results from \cite{daosmirnov} and  R. van Luijk for explaining parts of
\cite{festigarbagnatigeemenluijk} to me. I thank D. Brinkmann and A. St\"abler for careful reading and D. Brinkmann
for computations with \cite{M2}. Moreover, I thank L. Avramov, O. Baranouskaya, M. Blickle, B. Brenner, D. Brink\-mann, R. Buchweitz,
D. Cutkosky, H. Dao, L. Ein,
C. Favre, H. Fischbacher-Weitz,
R. Hartshorne, M. Katzman, J. Li, G. Lyubeznik, C. Miller, R. Mir\'{o}-Roig, P. Monsky, A. St\"abler, K. Schwede, A. Singh,
I. Smirnov,
S. Takagi, B. Teissier, P. Teixeira, V. Trivedi, K. Tucker, R. van Luijk,
K. Watanabe, W. Zhang for various discussions, remarks, their interest and encouragement.

This material is based upon work supported by the National Science 
Foundation under Grant No. 0932078 000, while the author was in 
residence at the Mathematical Science Research Institute (MSRI) in 
Berkeley, California, during the spring semester 2013 in the special year in commutative algebra 2012-2013.
I thank MSRI for its hospitality during my stay.

\section{Hilbert-Kunz function for modules}
\label{hilbertkunzmodule}

For an $R$-module $M$ we denote by $F^{e*}M= M \otimes_R {}^{e} R$ the pull-back of $M$
(often called the \emph{Peskine-Szpiro functor}, see \cite[D\'{e}finition 1.2]{peskineszpiro})
under the $e$th iteration of the Frobenius homomorphism $F: R \rightarrow R$, $f\mapsto f^{p}$.
For a submodule $N \subseteq M$ we get induced $R$-module homomorphisms
\[ F^{e*}N   \longrightarrow   F^{e*}M    \, , \]
which are in general not injective anymore. The image under these homomorphisms is denoted by $N^{[q]}$, where $q=p^e$.

\begin{defn}
Let $R$ denote a noetherian commutative ring of positive characteristic $p$
with a fixed maximal ideal\footnote{Usually $(R,\m)$ will either be a local ring or a standard-graded ring $R$ with $\m=R_+$.
In general $\dim R$ should be understood as $\operatorname{ht} (\m)$.} $\m$ and let $N \subseteq M$ be finitely generated $R$-modules such that $M/N$ has support on $\m$.
Then we call
\[
\hkmodulefct (N,M,e) := \lg ( F^{e*} M   /  \im ( F^{e*} N  \longrightarrow F^{e*} M  ) ) = \lg (F^{e*}M/ N^{[q]})
 \]
the \emph{Hilbert-Kunz function} of the submodule $N \subseteq M$. The limit, 
\[ \lim_{e \rightarrow \infty}  \frac{  \lg ( F^{e*} M   /  \im ( F^{e*} N  \longrightarrow F^{e*} M ) ) }{p^{e \dim R } } \]
is called the \emph{Hilbert-Kunz multiplicity} of $N \subseteq M$ and denoted by $\hkmodulemult (N,M)$, provided that it exists.
\end{defn}

\begin{rmk}
The support condition ensures  that the support of  the modules
$F^{e*} M   /  \im ( F^{e*} N  \rightarrow F^{e*} M )$ is on the maximal ideal and hence their lengths are finite,
because they are finitely generated. So this Hilbert-Kunz function is well defined
with values in $\mathbb N$. Note that for $M=R$ and $I=N$ an ideal we have $F^{e*} (R/I) = R/I^{[q]}$, so this definition includes the
classical case of a primary ideal. If $R$ contains an algebraically closed field $K$ then the length of a module is just
its $K$-dimension.
\end{rmk}

\begin{lemma}
\label{modulerepresentation}
Let $R$ denote a noetherian commutative ring of positive characteristic $p$
with a fixed maximal ideal $\m$ and let $N \subseteq M$ be finitely generated $R$-modules such that $M/N$ has support in $\m$.
Let
\[ R^n \stackrel{A}{\longrightarrow}   R^m \longrightarrow  L=  M/N  \longrightarrow  0 \]
be an exact complex and let $\tilde{N} \subseteq R^m $ be the image of the left homomorphism. Then 
\[      \hkmodulefct (N,M,e) =  \hkmodulefct (\tilde{N},R^m,e) = \hkmodulefct (0,L,e)  \, .   \]
\end{lemma}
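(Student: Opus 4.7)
The plan is to reduce all three quantities to the common value $\lg(F^{e*}L)$, exploiting the right exactness of the Frobenius pullback functor $F^{e*}(-) = - \otimes_R {}^{e}R$.

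First I would record the following elementary identity. For any submodule $N \subseteq M$ with quotient $L = M/N$, the right-exact sequence $N \to M \to L \to 0$ is preserved by the tensor functor $F^{e*}$, so that
\[ F^{e*}N \longrightarrow F^{e*}M \longrightarrow F^{e*}L \longrightarrow 0 \]
is exact. This yields a canonical isomorphism $F^{e*}M/\im(F^{e*}N \to F^{e*}M) \cong F^{e*}L$ and hence, directly from the definition of the Hilbert-Kunz function,
\[ \hkmodulefct(N, M, e) \;=\; \lg(F^{e*}L). \]

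The three identities in the lemma are now instances of this single observation. Applied to the original inclusion $N \subseteq M$ it gives $\hkmodulefct(N,M,e) = \lg(F^{e*}L)$. Applied to the presentation $R^n \stackrel{A}{\longrightarrow} R^m \to L \to 0$, which realizes $L$ as $R^m/\tilde{N}$, it gives $\hkmodulefct(\tilde{N}, R^m, e) = \lg(F^{e*}L)$. The case $\hkmodulefct(0, L, e) = \lg(F^{e*}L / \im(0)) = \lg(F^{e*}L)$ is immediate, closing the chain of equalities.

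The only technical point is checking that $F^{e*}L$ has finite length, so that the quantities make sense. As $L$ is finitely generated with support on $\m$, it is annihilated by some power $\m^s$; consequently $F^{e*}L$ is annihilated by $\m^s \cdot {}^{e}R$ and so remains supported on $\m$, hence of finite length as a finitely generated module. In short, there is no real obstacle here: the lemma is essentially formal once one recognizes that $F^{e*}$ is right exact, so that the Hilbert-Kunz function depends only on the quotient $L = M/N$ and not on the presentation chosen.
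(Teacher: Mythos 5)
Your proof is correct and matches the paper's own argument essentially verbatim: both exploit right exactness of $F^{e*}$ to identify all three Hilbert--Kunz functions with $\lg(F^{e*}L)$. The finiteness remark you append is already handled by the paper in the remark following the definition of $\hkmodulefct$ (and your phrase ``annihilated by $\m^s \cdot {}^eR$'' is slightly imprecise --- the clean statement is that $(\Ann L)^{[q]}$ annihilates $F^{e*}L$, which is still $\m$-primary), but the conclusion and route are the same.
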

\begin{proof}
By the right exactness of the tensor product we get exact complexes
\[   F^{e*} N \longrightarrow    F^{e*} M   \longrightarrow    F^{e*} L \longrightarrow 0 \]
and
\[ R^n \stackrel{A^{[q]} }{\longrightarrow}   R^m \longrightarrow  F^{e*} L   \longrightarrow  0\, ,\]
where in $A^{[q]}$ every entry of $A$ is raised to the $q$th power.
Hence
\[  F^{e*} L \cong      F^{e*} M / N^{[q]}    \cong   R^m/ \tilde{N}^{[q]} \, . \]
\end{proof}

By the preceding lemma it is enough to compute the Hilbert-Kunz function and multiplicity of the
$0$-submodule inside a finitely generated module $L$ with support on $\m$
(equivalently, a module of finite length or an artinian finitely generated module).
We will denote these just by $\operatorname{HKF} (L,e)$ and  $\operatorname{HK} (L)$.

\begin{defn}
Let $R$ denote a noetherian commutative ring of positive characteristic $p$
with a fixed maximal ideal $\m$ and let $ M$ be a finitely generated $R$-module.
Then we call
\[
\hikmodulefct (M,e) := \lg ( H^{i}_\m  ( F^{e*} M   ) )
 \]
the $i$th \emph{supported} (or \emph{local cohomological}) \emph{Hilbert-Kunz function} of the module $M$,
provided that the lengths are finite. The limit, 
\[ \lim_{e \rightarrow \infty}  \frac{      \lg ( H^{i}_\m  ( F^{e*} M   ) )      }{p^{e \dim R } } \]
is called the $i$th \emph{supported Hilbert-Kunz multiplicity} of $M$ and denoted by $\hikmodulemult (M)$,
provided that it exists.
\end{defn}

\begin{rmk}
We have the  exact sequence relating local and global cohomology (setting $U= \Spec R \setminus \{\m\}$,
we denote the sheafification of a module by the same name)  
\[  0 \longrightarrow   H^0_\m (F^{e*} M)  \longrightarrow F^{e*} M \longrightarrow H^0(U,F^{e*} M ) 
\longrightarrow H^1_\m (F^{e*} M) \longrightarrow 0     \]
and isomorphisms
\[ H^{i}_\m (F^{e*} M)  \cong H^{i-1}(U,  F^{e*} M )   \]
for $i \geq 2$. The support of $H^i_\m(F^{e*}(M))$ is the maximal ideal $\m$.
This is finitely generated under certain conditions, see \cite[Section 9]{brodmannsharp}.
If $M$ is a finitely generated $R$-module with support in $\m$, then we get from the above sequence
and observing that $ M|_U=0$ the isomorphisms
\[ H^0_\m (F^{e*} M) \cong F^{e*} M  \]
and $H^{i}_\m(F^{e*} M ) = 0$ for $i \geq 1$.  
So in this case this limit is just a reformulation of the Hilbert-Kunz multiplicity of $M$.
\end{rmk}

\begin{lemma}
\label{relatingsequence}
Let $R$ denote a noetherian commutative ring of positive characteristic $p$
with a fixed maximal ideal $\m$ and let $U=\Spec R \setminus \{\m\}$ be the punctured spectrum.
Let $N \subseteq M$ be finitely generated $R$-modules.
Then there exists a short exact sequence
\[  0 \longrightarrow H^0_\m (F^{e*}M) / H^0_\m( N^{[q]}) 
\longrightarrow F^{e*}M/ N^{[q]} \]
\[\longrightarrow H^0(U,   F^{e*}M) /\im \left( F^{e*}N \longrightarrow  H^0(U,   F^{e*}M)  \right)
\longrightarrow H^1_\m (   F^{e*}M)  
\longrightarrow 0 \, .   \]
\end{lemma}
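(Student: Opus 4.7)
The plan is to obtain the desired four-term sequence by splicing two short exact sequences produced by the snake lemma. First I will record the standard four-term sequence from the preceding remark for $X = F^{e*}M$,
\[ 0 \longrightarrow H^0_\m(F^{e*}M) \longrightarrow F^{e*}M \longrightarrow H^0(U, F^{e*}M) \longrightarrow H^1_\m(F^{e*}M) \longrightarrow 0, \]
and split it as $0 \to H^0_\m(F^{e*}M) \to F^{e*}M \to I \to 0$ together with $0 \to I \to H^0(U, F^{e*}M) \to H^1_\m(F^{e*}M) \to 0$, where $I$ denotes the image of $F^{e*}M$ in $H^0(U, F^{e*}M)$.

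Next I will exploit the inclusion $N^{[q]} \hookrightarrow F^{e*}M$. Left exactness of $H^0_\m$ yields $H^0_\m(N^{[q]}) \hookrightarrow H^0_\m(F^{e*}M)$, and in fact $H^0_\m(N^{[q]}) = N^{[q]} \cap H^0_\m(F^{e*}M)$. Writing $I' \subseteq I$ for the image of $N^{[q]}$ in $H^0(U, F^{e*}M)$, one gets $N^{[q]}/H^0_\m(N^{[q]}) \iso I'$; moreover, since $N^{[q]} = \im(F^{e*}N \to F^{e*}M)$, the submodule $I'$ agrees with $\im(F^{e*}N \to H^0(U, F^{e*}M))$. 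This produces a commutative diagram with exact rows and injective vertical maps comparing $0 \to H^0_\m(N^{[q]}) \to N^{[q]} \to I' \to 0$ to $0 \to H^0_\m(F^{e*}M) \to F^{e*}M \to I \to 0$, to which the snake lemma applies and gives
\[ 0 \longrightarrow H^0_\m(F^{e*}M)/H^0_\m(N^{[q]}) \longrightarrow F^{e*}M/N^{[q]} \longrightarrow I/I' \longrightarrow 0. \]

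Finally, dividing the sequence $0 \to I \to H^0(U, F^{e*}M) \to H^1_\m(F^{e*}M) \to 0$ by $I'$ produces $0 \to I/I' \to H^0(U, F^{e*}M)/I' \to H^1_\m(F^{e*}M) \to 0$, and splicing this with the previous short exact sequence at the term $I/I'$ yields the claimed four-term sequence. The whole argument is a direct diagram chase; the only points that really need to be argued are the two identifications above, and neither is deep. Thus the main obstacle is purely bookkeeping of submodules, quotients, and images rather than any substantive mathematical difficulty.
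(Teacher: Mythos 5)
Your proof is correct and is essentially the paper's argument, just reorganized: the paper runs the same element-level diagram chase directly on the two local-cohomology four-term sequences, whereas you split at the image $I$, invoke the snake lemma, and splice, which is a tidier packaging of the identical underlying identifications ($H^0_\m(N^{[q]}) = N^{[q]} \cap H^0_\m(F^{e*}M)$ and $I' = \im(F^{e*}N \to H^0(U,F^{e*}M))$). The two are mathematically interchangeable, with yours trading the paper's explicit chase for a cleaner structural presentation.
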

\begin{proof}
We have the following commutative diagram with exact rows 
\[
\begin{matrix}
0 & \! \rightarrow \!&  H^0_\m (F^{e*} N) & \! \rightarrow \! &  F^{e*} N & \! \rightarrow \! & H^0 (U, F^{e*} N) &
 \! \rightarrow \!  & H^1_\m (F^{e*} N)   &  \! \rightarrow \! \, \,  0  \\
& & \downarrow & & \downarrow & &  \downarrow & & \downarrow &  \\
 0 &  \! \rightarrow \! &  H^0_\m (F^{e*} M) &   \! \rightarrow \! &  F^{e*} M & \! \rightarrow \! &  H^0 (U, F^{e*} M) &
 \! \rightarrow \!  & H^1_\m (F^{e*} M)   &  \! \rightarrow \! \, \,  0  \\
 & & &  & \downarrow & & & &&  \\
 & & &  &   \! \!   F^{e*} M /  N^{[q]}    \!\!     & & & && 
 \\ & & &  & \downarrow & & & &&  \\
  & & &  &  0 \, . \! \!& && & & 
 \end{matrix}
\]
We have (inside $ F^{e*}M$)
\[  H^0_\m(N^{[q]})  =  N^{[q]} \cap H^0_\m(F^{e*} M ) \, ,  \]
hence we get an injection
\[   H^0_\m (F^{e*} M) / H^0_\m ( N^{[q]}  )   \subseteq   F^{e*} M / N^{[q]} \, .  \]
We have a homomorphism (coming from restricting to $U$)
\[ \varphi:  F^{e*} M / N^{[q]}   \longrightarrow  H^0(U, F^{e*}M) / \im(N^{[q]} )  \]
which sends $H^0_\m (F^{e*} M) / H^0_\m ( N^{[q]}  )$ to $0$.
Let $x \in  F^{e*} M / N^{[q]}$ (represented by $x \in  F^{e*} M$)
be an element mapped to $0$ under $\varphi$. Then there exists $y \in N^{[q]}$
such that $\varphi(y)=\varphi(x)$ in $H^0(U, F^{e*}M)$.
The difference $x-y  \in  F^{e*} M    $ is then mapped to $0$ in $H^0(U, F^{e*}M)$ and hence 
$x-y \in H^0_\m(F^{e*}M)$. The class $[x-y] \in H^0_\m (F^{e*}M) / H^0_\m( N^{[q]}) $ is mapped to $x$ in
$F^{e*} M / N^{[q]}$, proving the exactness at the second spot.
Finally, $N^{[q]}$ is mapped to $0$ in $H^1_\m (F^{e* }M)$, therefore
we get a surjection
\[    H^0(U, F^{e*}M) / \im(N^{[q]} )    \longrightarrow  H^1_\m (F^{e* }M) \, ,  \]
which sends $ F^{e*}M/ N^{[q]}$ to $0$. The exactness at the third spot is clear from the second row of the diagram.
\end{proof}

\begin{rmk}
Note that the given sequence is not the long exact sequence attached to $F^{e*} (M/N)$.
The modules $H^0_\m(F^{e*}M) / H^0_\m(N^{[q]})$
are the kernels of $H^0_\m(F^{e*}M)  \rightarrow  H^0_\m(F^{e*}(M/N) )  $.
So their lengths are not given by a supported Hilbert-Kunz function, but by another more general construction. Even for $N=M$
this sequence is not trivial,
it degenerates to $0\rightarrow H^0(U,F^{e*}M)/ \rho_U(F^{e*}M)   \rightarrow   H^1_\m (F^{e*}M) \rightarrow 0$ (where $\rho_U$ denotes the
restriction homomorphism to $U$).
\end{rmk}

\begin{rmk}
The Hilbert-Kunz function $e \mapsto R/\m^{[p^{e}]}$ of the maximal ideal was introduced by E. Kunz in \cite{kunzhilbertkunz}.
P. Monsky proved in \cite{monskyexists}
the existence of the limit $\lim_{e \rightarrow \infty} \frac{ \lg \left( M/MI^{[p^{e} ]} \right)}{p^{e \dim R}}$
for a finitely generated $R$-module $M$ and an $\m$-primary ideal $I$.
At around the same time S. P. Dutta considered in \cite[Corollary 2, Lemma 1.6]{duttafrobenius}
the length of $F^{e*}M/p^{e \dim R}$ for an $R$-module $M$ of finite length, in particular in the case of
finite projective dimension; G. Seibert unified some of their results in \cite{seibertcomplex}.

An additional viewpoint was opened with the invention of tight closure in the late eighties.
Without using explicitly the terminology of Hilbert-Kunz multiplicities, M. Hochster and C. Huneke showed in
\cite[Theorem 8.17]{hochsterhuneketightclosure}
(see also \cite[Theorem 5.4]{hunekeapplication} for the formulation in Hilbert-Kunz terminology)
under weak conditions on the local ring
for finitely generated submodules $N \subseteq W \subseteq M$ with $W/N$ artinian  that
$W \subseteq N^*$,
the tight closure of $N$ (inside $M$), holds if and only if the limit of the lengths of the quotient 
$ \lim_{e \rightarrow \infty}   \frac{ \lg \left(  W^{[p^e]} /N^{[p^e]} \right)}{p^{e \dim R}} = 0$.
This viewpoint of \emph{(minimal) relative Hilbert-Kunz multiplicity} was studied in \cite{watanabeyoshidaminimal}
and a relation to the $F$-\emph{signature} was established.
The desire to have a Hilbert-Kunz criterion for tight closure even when the ideal (or the module) is not (co-)primary and related
questions led to the study of zeroth local cohomology of Frobenius pull-backs of modules by I. Aberbach in \cite{aberbachgorenstein}
and
N. Epstein and Y. Yao in \cite{epsteinyao}. Finally, H. Dao and I. Smirnov considered in \cite{daosmirnov} local-cohomological
Hilbert-Kunz multiplicities in general and proved that they exist under some weak conditions;
in particular they exist if the ring has an isolated singularity (see \cite[Corollary 3.7]{daosmirnov}).
\end{rmk}

\section{Hilbert-Kunz function in the graded case}
\label{geometric}

In this section we describe how the Hilbert-Kunz function of a graded module of finite length can be described in the graded case with the help of
vector bundles over the corresponding projective variety.
This approach was very successful in ring dimension two, when the corresponding projective varieties are curves, see
\cite{brennerrational}, \cite {trivedirational}.
In higher dimension one should not expect results which settle everything; instead one needs a detailed
study of specific projective varieties in order to make progress.

The  degree of a polarized variety $Y$ of dimension $d$ with fixed ample divisor $H$
is the self intersection number $H^d$.
For a hypersurface $Y \subset {\mathbb P}^{d+1}$ endowed with the hyperplane section this is
just the degree of the defining equation.
For a standard-graded ring $R$ we always use the polarization of $Y=\Proj R$ given by $\O_Y(1)$.

The following theorem gives a general translation from Hilbert-Kunz function to data on the projective variety.
It is only useful if we can find ways to control the top cohomology of the top syzygy bundle. As usual, we set
$h^{i} (\shF)= \dim_K H^i(Y, \shF)$ for a coherent sheaf $\shF$ on $Y$.

\begin{thm}
\label{hilbertkunzgeometric}
Let $R$ be a standard-graded Cohen-Macaulay domain of dimension $d +1 \geq 2$ with an isolated singularity
over an algebraically closed field $K$ of positive characteristic $p$. Let $H^d$ denote the degree of $Y=\Proj R$.
Let $M$ be a graded $R$-module of finite length. Let
\[ \cdots  \longrightarrow F_2  \longrightarrow F_1 
\longrightarrow  F_0  \longrightarrow M \longrightarrow 0 \]
be a graded complex which is exact (as sheaves) on $U=D(R_+)$, where $F_i =\bigoplus_{j \in J_i}   R(- \beta_{i j})$
are graded free $R$-modules
(we call such a complex a \emph{punctured resolution} of $M$).
Let $\operatorname{Syz}_i = \ker \delta_i $, where $\delta_i:F_i \rightarrow F_{i-1}$.
We denote the corresponding modules on $U$ and on $Y$ with the same symbols.
Then
\begin{eqnarray*}
 & & \operatorname{HKF}(M,e) \\
 & \!\!  \!\!  = \!\!  \!\!  & \sum_{m \in {\mathbb N} }  h^d (( F^{e*} \Syz_d) (m)  )
+ \sum_{i=0}^{d} (-1)^{ d-1-i } \left(    \sum_{m \in \N    }    h^d( ( F^{e*} F_i )  ( m))     \right) 
\cr
& \!\! \!\!   = \!\!  \!\!  &  \!\!  \sum_{m \in {\mathbb N} }  h^d ( (F^{e*} \Syz_d) (m)  )
  \!  +  \!    \sum_{i=0}^{d} (-1)^{ d-1-i }   \!\!     \left(    \sum_{j \in J_i } \!
\left( \sum_{m \in \N    }   h^d( \O_Y( - \beta_{ij} q + m))   \!\!   \right)   \!\!        \right) 
\end{eqnarray*}
(everything is computed on $Y$)
and
\begin{eqnarray*}
& & \operatorname{HK}(N,M) \\
& =& \!\!  \lim_{e \rightarrow \infty} \frac{ \sum_{m \in {\mathbb N} }  h^d (( F^{e*} \Syz_d) (m)  )}{q^{d+1} }
+ \frac{H^d}{(d+1)!}  \!\!   \left(  \sum_{i=0}^{d} (-1)^{ d+1-i }  \left(       \sum_{j \in J_i } 
 \beta^{d+1}_{ij}  \!\!    \right)  \!\!  \right)  \!\!  .
\end{eqnarray*}
\end{thm}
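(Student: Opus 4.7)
The plan is to establish the pointwise identity
\[
\dim_K (F^{e*}M)_m = h^d\bigl(Y, \widetilde{F^{e*}\Syz_d}(m)\bigr) + \sum_{i=0}^d (-1)^{d-1-i}\, h^d\bigl(Y, \widetilde{F^{e*}F_i}(m)\bigr)
\]
in each graded degree $m$ and then sum it over $m \in \N$. First I exploit the isolated-singularity hypothesis: it makes $U = \Spec R \setminus \{\m\}$ and $Y = \Proj R$ regular, so Frobenius is flat on $Y$. The given punctured resolution therefore remains exact after $F^{e*}$ and sheafification, yielding an exact complex of coherent sheaves
\[
0 \to \widetilde{F^{e*}\Syz_d} \to \widetilde{F^{e*}F_d} \to \cdots \to \widetilde{F^{e*}F_0} \to 0
\]
on $Y$, whose leftmost term is locally free by the regularity of $Y$. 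In parallel, the depth-$(d+1)$ CM property of $R$ together with the standard graded correspondence between local and sheaf cohomology yields $\dim(F^{e*}F_i)_m = h^0(Y, \widetilde{F^{e*}F_i}(m))$ in every degree.

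To prove the pointwise identity, use the presentation $F^{e*}F_1 \to F^{e*}F_0 \to F^{e*}M \to 0$ to write
\[
\dim(F^{e*}M)_m = h^0(\widetilde{F^{e*}F_0}(m)) - h^0(\widetilde{F^{e*}F_1}(m)) + \dim(K_1)_m,
\]
where $K_1$ is the $R$-module kernel of $F^{e*}F_1 \to F^{e*}F_0$. A short depth argument using the short exact sequences $0 \to K_1 \to F^{e*}F_1 \to I \to 0$ and $0 \to I \to F^{e*}F_0 \to F^{e*}M \to 0$, together with the vanishing $H^j_\m(F^{e*}F_k) = 0$ for $j \le d$, shows $H^0_\m(K_1) = H^1_\m(K_1) = 0$, whence $\dim(K_1)_m = h^0(Y, \widetilde{K_1}(m))$. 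One then chops the exact complex on $Y$ into successive short exact sequences, applies their long exact cohomology sequences, and combines them with the Euler characteristic identity
\[
(-1)^{d+1}\chi(\widetilde{F^{e*}\Syz_d}(m)) + \sum_{i=0}^d (-1)^i \chi(\widetilde{F^{e*}F_i}(m)) = 0
\]
coming from the exactness of that complex, so as to rewrite the above $h^0$-expression as the desired signed combination of $h^d$'s. The intermediate cohomologies $h^1, \dots, h^{d-1}$ of the spliced syzygy sheaves that appear during these long exact sequences cancel pairwise through the chopping; arranging this cancellation cleanly is the main obstacle. Both sides of the resulting pointwise identity vanish for $m < 0$ (the left since $F^{e*}M$ is concentrated in non-negative degrees, the right because all $h^0$'s in the $K_1$-presentation vanish there), so summing over $m \in \N$ yields the stated formula for $\operatorname{HKF}(M,e)$.

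Finally, for the Hilbert-Kunz multiplicity one divides by $q^{d+1}$ and lets $e \to \infty$. For each free summand $\O_Y(-\beta_{ij} q)$, Serre duality gives $h^d(\O_Y(-\beta_{ij}q + m)) = h^0(\omega_Y(\beta_{ij}q - m))$, and the leading term of the Hilbert polynomial of $\omega_Y$ with respect to the polarization $\O_Y(1)$ is $H^d k^d / d!$, so
\[
\lim_{e \to \infty} \frac{1}{q^{d+1}} \sum_{m \in \N} h^d(\O_Y(-\beta_{ij}q + m)) = \frac{H^d \beta_{ij}^{d+1}}{(d+1)!}.
\]
Since $(-1)^{d-1-i} = (-1)^{d+1-i}$, collecting these asymptotics with the signs from the HKF formula reproduces the displayed Hilbert-Kunz multiplicity, leaving the first summand (involving the top syzygy sheaf) as the genuinely new analytic input the rest of the paper goes on to study.
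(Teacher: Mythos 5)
Your overall plan — translate the Hilbert-Kunz function to sheaf cohomology on $Y$, identify the contribution of the top syzygy bundle, and handle the free terms via Serre duality and Riemann-Roch — is the same strategy as the paper, and several individual steps are correct: the use of flatness of Frobenius on the regular locus to keep the complex exact; the depth argument showing $H^0_\m(K_1)=H^1_\m(K_1)=0$, so that $\dim(K_1)_m=h^0(Y,(F^{e*}\Syz_1)(m))$; and the Serre duality/Riemann-Roch asymptotic $\lim_{e}\frac{1}{q^{d+1}}\sum_{m\in\N}h^d(\O_Y(-\beta q+m))=\frac{H^d\beta^{d+1}}{(d+1)!}$, including the sign identity $(-1)^{d-1-i}=(-1)^{d+1-i}$.

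The gap is exactly where you flag it yourself: the conversion of
\[
\dim(F^{e*}M)_m=h^0\bigl((F^{e*}F_0)(m)\bigr)-h^0\bigl((F^{e*}F_1)(m)\bigr)+h^0\bigl((F^{e*}\Syz_1)(m)\bigr)
\]
into the asserted signed combination of $h^d$'s. You propose to combine the Euler characteristic identity with the long exact sequences of the chopping, and you remark that the intermediate cohomologies ``cancel pairwise'' but that arranging this cleanly is ``the main obstacle''; this is an admission that the argument is incomplete, and the proposed shortcut does not in fact bypass the hard part. The Euler characteristic of $(F^{e*}\Syz_i)(m)$ contains intermediate cohomology $h^j$ for $1\le j\le d-1$, and these do \emph{not} all vanish (in particular $h^1((F^{e*}\Syz_1)(m))=\dim(F^{e*}M)_m$ is precisely what one is trying to compute, so a pure Euler-characteristic bookkeeping from the global s.e.s.\ only reproduces the tautological relation $h^d(F_1)-h^d(F_0)=h^d(\Syz_1)$ and does not isolate the answer). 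The step that does the real work in the paper's proof is a specific vanishing statement, proved by an inner induction: $H^j(Y,(F^{e*}\Syz_i)(m))=0$ for $1\le i\le d-1$ and $i+1\le j\le d-1$. This vanishing, obtained from the $d$-shifted Cohen--Macaulay vanishing $H^j(Y,\O_Y(k))=0$ for $1\le j\le d-1$ pushed through the successive s.e.s.'s, gives first the chain of isomorphisms $H^1(\Syz_1)\cong H^2(\Syz_2)\cong\cdots\cong H^{d-1}(\Syz_{d-1})$, then short exact sequences $0\to H^d(\Syz_i)\to H^d(F_i)\to H^d(\Syz_{i-1})\to 0$ for $i\le d-1$ and a four-term sequence at $i=d$, and finally an explicit telescoping of $h^d(\Syz_{d-1})$ as $\sum_{i=0}^{d-1}(-1)^{d-1-i}h^d(F_i)$. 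Without stating and proving that vanishing lemma, neither the ``pairwise cancellation'' nor the Euler-characteristic route yields the formula; you need to supply this inductive vanishing (or something playing the same role) for the proof to close. Also note that the paper starts from the equivalent but slightly more convenient $\dim(F^{e*}M)_m=h^1((F^{e*}\Syz_1)(m))$ rather than your $h^0$-based expression; the two are related by the four-term exact sequence of the first s.e.s., and starting from $h^1$ makes the dimension shift $h^i(\Syz_i)\cong h^{i+1}(\Syz_{i+1})$ immediate, whereas starting from $h^0$ forces an extra detour.
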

\begin{proof}
The module $ F^{e*}M$, whose length (or dimension over $K$) we would like to compute, is the cokernel of the morphism
\[ F^{e*}  F_1 \stackrel{\delta^q_1}{ \longrightarrow}  F^{e*}    F_0 \, .\]
Since the dimension of $R$ is at least three and $R$ is supposed to be normal, this homomorphism equals
\[  \Gamma(U,  F^{e*}    F_1)  \stackrel{\delta^q_1}{ \longrightarrow}  \Gamma(U,  F^{e*}           F_0) \, . \]
On $U$ we have the short exact sequence
$0 \rightarrow \Syz_1 \rightarrow F_1 \rightarrow F_0 \rightarrow 0$, since $M$ is supported on $\m$. The
Frobenius pull-backs of this sequence are exact and so the long exact sequence of sheaf cohomology on $U$ for
$0 \rightarrow  F^{e*}\Syz_1 \rightarrow F^{e*} F_1 \rightarrow  F^{e*} F_0 \rightarrow 0$
yields for this cokernel
\[  0 \longrightarrow \koker_e    \longrightarrow H^1 (U,  F^{e*} \Syz_1   )
\longrightarrow H^1 (U,  F^{e*}  F_1   )  \longrightarrow  \cdots  \, .\]
If the dimension of $R$ is at least three,
then by Cohen-Macaulayness we have $H^1 (U,  F^{e*}  F_1   ) =0$ (the proof for the two-dimensional case continues below)
and therefore we have $  F^{e*}M \cong H^1(U, F^{e*} \Syz_1 ) $.

On the projective variety $Y$ we have also the short exact sequences
\[   0 \longrightarrow \Syz_{1} \longrightarrow F_1 \longrightarrow F_0 \longrightarrow 0 \, \]  
and
\[   0 \longrightarrow \Syz_{i+1} \longrightarrow F_{i+1} \longrightarrow \Syz_{i} \longrightarrow 0 \, \]
for $i \geq 1$
(for convenience we set $\Syz_0=F_0$)
where now $F_i \!= \! \bigoplus_{j \in J_i} \! \O_Y(-\beta_{ij})$.
Inductively we see that the $\Syz_i$ are locally free.
Applying the Frobenius pull-back to these sequences and twisting by $\O_Y(m)$ yields
\begin{eqnarray*}
  \koker_e & = &  \bigoplus_{m \in \N} 
\koker \left(     \Gamma(Y, ( F^{e*} F_1)(m)) \longrightarrow \Gamma (Y, (F^{e*} F_0) (m) )    \right) \\
&=&   \bigoplus_{m \in \N}      \ker \left(  H^1(Y, ( F^{e*} \Syz_1)(m))  \longrightarrow    H^1(Y, ( F^{e*} F_1)(m))           \right) \\
&=&   \bigoplus_{m \in \N}       H^1(Y, ( F^{e*} \Syz_1)(m))  \, . 
\end{eqnarray*}
We also get exact sequences
\[ \longrightarrow  H^i(Y, (F^{e*} F_{i+1}) (m) )      \longrightarrow
H^{i}(Y,(F^{e*} \Syz_i) (m)) \]
\[ \longrightarrow
 H^{i+1} (Y, (F^{e*} \Syz_{i+1} ) (m))  \longrightarrow   H^{i+1}(Y, (F^{e*} F_{i+1}) (m) )    \longrightarrow       \]
which induce isomorphisms
\[  H^{i}(Y,(F^{e*} \Syz_i) (m)) \cong H^{i+1} (Y, (F^{e*} \Syz_{i+1} ) (m))\]
for $i=1, \ldots , d-2$ (this is empty for $d=1,2$). Hence the Hilbert-Kunz function is the sum over all $m$
of the dimensions of either of these cohomology modules. Moreover, for $i=d-1$ we get the exact sequence
\[ 0   \longrightarrow  H^{d-1}(Y, (F^{e*} \Syz_{d-1})(m))     \longrightarrow  H^{d}(Y, (F^{e*} \Syz_d)(m))  \]
\[\longrightarrow   H^d(Y,  ( F^{e*} F_{d}) (m) )  \longrightarrow    H^{d}(Y, (F^{e*} \Syz_{d-1})(m))   \longrightarrow 0  \, . \]
The Hilbert-Kunz function is the sum of the left hand module over all $m$,
hence it can be expressed by the sum over $m$ of the alternating sum of
the other expressions
(for $d=1$ there is no $0$ on the left, but also in this case the Hilbert-Kunz function is given by this alternating sum).
The terms $\sum_{m \in \N} H^{d}(Y, (F^{e*} \Syz_d)(m)) $ and $ H^d(Y, (F^{e*} F_{d}) (m) ) $
(with a minus sign)
are explicitly stated in the formula of the theorem
(for $d=1$ this is also true for the last module, finishing the proof in this case).

The term  $  H^{d}(Y, (F^{e*} \Syz_{d-1})(m))   $ will be computed using again the defining short exact sequences.
Note first that $H^j(Y, (F^{e*} \Syz_i)(m))=0$ for $i=1, \ldots, d-1$ and all $j$ with $ i+1  \leq  j \leq d-1 $.
We prove this claim by induction on $i$. For $i=1$ this follows from 
\[H^{j-1}(Y, (F^{e*} F_0)(m) ) \longrightarrow H^j(Y,  (F^{e*}  \Syz_1)(m) ) \longrightarrow   H^j(Y,(F^{e*}  F_1)(m) ) \]
(coming from the first defining sequence)
and the Cohen-Macaulay property.
The induction step follows from
\[H^{j-1}(Y, (F^{e*} \Syz_{i}  )(m) ) \rightarrow H^j(Y,  (F^{e*}  \Syz_{i+1})(m) ) \rightarrow   H^j(Y,(F^{e*}  F_{i+1})(m) ) .\]
From this claim we deduce the short exact sequences
\[0 \rightarrow \! H^{d} (Y, (\!F^{e*} \Syz_{i}\!  )(m)\! ) \rightarrow \! H^d(Y,  (\!F^{e*}  F_i\! )(m)\! )
\rightarrow \!   H^d(Y,(\! F^{e*} \Syz_{i-1}\! )(m)\! ) \rightarrow \! 0 \]
for $i \leq d-1$
and we compute
\begin{eqnarray*}
\!\!\!\!\! & &\!\!\! \!\! \sum_{m=0}^\infty h^{d} ( (F^{e*} \Syz_{d-1} )(m)   )   \\
\!\!\!\! \! & = & \!\!\!\!\! \sum_{m=0}^\infty h^{d} ( (F^{e*} F_{d-1} )(m) )   -  \sum_{m=0}^\infty h^{d} ( (F^{e*} \Syz_{d-2} )(m) \\
\!\!\!\!  \!&=& \!\!\! \!\!  \sum_{m=0}^\infty h^{d} ( (F^{e*} F_{d-1} )(m) ) \!
- \! \!\left( \sum_{m=0}^\infty  \!\!   h^{d} ( (F^{e*} F_{d-2} )(m) )\!
- \!\! \! \sum_{m=0}^\infty   \!\!        h^{d} ( (F^{e*} \Syz_{d-3} )(m)  \!\!  \right)                \\
\!\!\!\!  \! &=& \vdots \\
\!\!\!\! \! &=& \!\!\!\!\! \sum_{i=0}^{d-1} (-1)^{d+1-i}    \left(   \sum_{m=0}^\infty h^{d} ( (F^{e*} F_{i} )(m) )       \right) \, .
\end{eqnarray*}
This gives the first equation in the formula for the Hilbert-Kunz function. The second equation follows immediately using
$F_i = \bigoplus_{ j \in J_i} \O_Y(- \beta_{ij})$.

For the formula for the Hilbert-Kunz multiplicity we only have to compute
\[\lim_{e \rightarrow \infty}  \frac{1}{q^{d+1} }   \sum_{m=0}^\infty    h^d( \O_Y(- \beta q +m) )  \, ,     \]
and so Serre duality (\cite[Corollary III.7.7]{hartshorne}) and Riemann-Roch (\cite[Corollary 15.2.1]{fultonintersection})
gives that this is $\frac{H^d}{(d+1)!}   \beta^{d+1} $
(see Lemma \ref{antiamplethreshold} below for this argument in a slightly more complicated setting).
\end{proof}

Because we will focus in examples on the case of ring dimension three, we state the following corollary explicitly.

\begin{cor}
\label{hilbertkunzgeometricsurface}
Let $R$ be a three-dimensional standard-graded Cohen-Ma\-cau\-lay domain with an isolated singularity 
over an algebraically closed field of positive characteristic $p$. Let $H^2$ denote the degree of $Y=\Proj R$.
Let
$I=(f_1, \ldots, f_n) \subseteq R$ be a homogeneous $R_+$-primary ideal with $d_i= \deg (f_i)$. Let
\[ 0\longrightarrow \Syz_2 \longrightarrow  F_2\!  = \! \bigoplus_{j=1}^s \! R(- \beta_j)  \longrightarrow 
F_1 \!= \! \bigoplus_{i=1}^n R(-d_i) 
\longrightarrow  R  \longrightarrow R/I \longrightarrow 0 \]
be a graded complex which is exact on $D(R_+)$.
Then
\[ \operatorname{HK}(I)
= \lim_{e \rightarrow \infty} \frac{ \sum_{m \in {\mathbb N} }  h^2 (( F^{e*} \Syz_2) (m)  )}{q^{3} }
+ \frac{H^2}{6}\left(  -  \sum_{j=1}^{s}  \beta_j^3  +   \sum_{i=1}^n  d_i^3
   \right)       \, . \]
\end{cor}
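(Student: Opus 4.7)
The plan is to obtain this corollary as a direct specialization of Theorem \ref{hilbertkunzgeometric} to the case $d=2$, with the module taken to be $M = R/I$ and with the given punctured resolution of length three.

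First I would verify the hypotheses: since $I$ is an $R_+$-primary homogeneous ideal, the quotient $R/I$ is a graded $R$-module of finite length, so Theorem \ref{hilbertkunzgeometric} applies. By Lemma \ref{modulerepresentation}, $\operatorname{HKF}(I, R, e) = \operatorname{HKF}(R/I, e)$, so it suffices to compute the Hilbert-Kunz function of $R/I$ regarded as a module of finite length. The given complex
\[ 0 \longrightarrow \Syz_2 \longrightarrow F_2 \longrightarrow F_1 \longrightarrow R = F_0 \longrightarrow R/I \longrightarrow 0 \]
is exact on $U = D(R_+)$ by hypothesis and has the form required in Theorem \ref{hilbertkunzgeometric}, with index sets $J_0 = \{0\}$ (a single copy of $R$ with twist $\beta_{00} = 0$), $J_1 = \{1, \ldots, n\}$ with $\beta_{1i} = d_i$, and $J_2 = \{1, \ldots, s\}$ with $\beta_{2j} = \beta_j$. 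The top syzygy is $\Syz_d = \Syz_2$.

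Next I would substitute these data into the limit formula of Theorem \ref{hilbertkunzgeometric}. The first summand transcribes verbatim as
\[ \lim_{e \to \infty} \frac{\sum_{m \in \N} h^2((F^{e*}\Syz_2)(m))}{q^{3}}. \]
For the second summand, the general formula is
\[ \frac{H^d}{(d+1)!} \sum_{i=0}^{d} (-1)^{d+1-i} \sum_{j \in J_i} \beta_{ij}^{d+1}, \]
which for $d=2$ becomes $\frac{H^2}{6}$ times the alternating sum $- \sum_{j \in J_0} \beta_{0j}^3 + \sum_{i=1}^n d_i^3 - \sum_{j=1}^s \beta_j^3$. The $i=0$ contribution vanishes because $\beta_{00} = 0$, leaving precisely
\[ \frac{H^2}{6}\left(\sum_{i=1}^n d_i^3 - \sum_{j=1}^s \beta_j^3\right), \]
which is the stated formula.

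There is no real obstacle: the only point that requires a moment's attention is keeping the signs straight when specializing the alternating sum $\sum_{i=0}^{d} (-1)^{d+1-i}$ to $d=2$, and observing that the $F_0 = R$ contribution drops out because $R$ carries no twist. Everything else is a verbatim substitution into the formula already proved in Theorem \ref{hilbertkunzgeometric}.
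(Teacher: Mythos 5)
Your proposal is correct and is exactly the paper's route: the paper likewise derives the corollary as a direct specialization of Theorem \ref{hilbertkunzgeometric} to $d=2$ with $M=R/I$. You've merely made explicit the bookkeeping (indexing of the $J_i$, the sign pattern, and the vanishing of the $F_0=R$ term) that the paper leaves to the reader.
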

\begin{proof}
This follows directly from Theorem \ref{hilbertkunzgeometric}. 
\end{proof}

\begin{rmk}
\label{serredual}
The dimension of the top cohomology \[H^d (Y, (F^{e*} \Syz_d )(m))\] can be computed by Serre duality as the dimension of the sections
\[H^0(Y, (F^{e*} ( \Syz_d^\dual )) (-m) \otimes \omega_Y  ) \, .\]
In summing up over all nonnegative $m$ and dividing by $q^{d+1}$,
the effect of the canonical sheaf will vanish, so we are interested in the limit
\[\lim_{e \rightarrow \infty} \frac{ \sum_{m=0}^{- \infty} h^0((F^{e*} \shG) (m) ) }{  q^{d+1} }\]
where $\shG$ is locally free.
This is a kind of Frobenius-Riemann-Roch problem and still difficult in general. One can only expect to solve this problem for
specific classes of varieties and bundles. We will see later (Corollary \ref{oguisoirrational})
that this limit can be an irrational number even for line bundles on
smooth projective hypersurfaces of degree four in ${\mathbb P}^3$.
\end{rmk}

\begin{rmk}
The Cohen-Macaulay assumption in Theorem \ref{hilbertkunzgeometric} is not essential.
We know that the intermediate cohomology of
$\O_Y(m)$, that is $H^i(Y, \O_Y(m))$ for $i=1, \ldots , \dim Y-1$, lives only in a finite range for $m$ (as long as $Y$ is smooth).
Hence the sequences with which we work in the proof 
of the theorem may not be exact anymore, however, their unexactness is neglectable by dividing through $q^{\dim Y +1}$.
Also on singular normal varieties
the theorem holds, provided that Serre duality and an appropriate version of Riemann-Roch holds.
\end{rmk}

\begin{rmk}
\label{finitepd}
If $M$ has finite projective dimension over $R$,
then we can take the minimal free resolution and we get
$\Syz_d \cong \bigoplus_{j \in J_{d+1} } \O_Y(-\beta_{d+1, j})  $ ($=F_{d+1}$). In this case the formula from Theorem
\ref{hilbertkunzgeometric} for the Hilbert-Kunz multiplicity becomes
\[ \operatorname{HK}(M)
= \frac{H^d}{(d+1)!}\left(  \sum_{i=0}^{d+1} (-1)^{ d+1-i }  \left(       \sum_{j \in J_i } 
\beta^{d+1}_{ij}  \right)   \right) \, . \]
This also holds if we have a finite punctured resolution.
\end{rmk}

\begin{rmk}
If $M$ has infinite projective dimension over $R$, and we take the minimal resolution over $R$
then we know that $\Syz_d  $ has no intermediate cohomology, i.e. no cohomology apart from the $0$th and the top-dimensional cohomology.
So the corresponding module is a maximal Cohen-Macaulay module.
In some cases (\cite{casanellashartshorne}, \cite{faenzicubic}, \cite{malaspinarao}, \cite{mohankumarraoravindra}, \cite{ottavianispinor})
these are reasonably well understood. Note however that the Frobenius pull-back of such a bundle will have intermediate
cohomology in general.
\end{rmk}

\begin{rmk}
Let $R=K[X_0, \ldots , X_{N}]/{\mathfrak a}$ be a graded ring as in the theorem. Then $M$ has finite projective dimension
over the polynomial ring $K[X_0, \ldots, X_{N}]$. Let $F_\bullet$ be the
finite minimal resolution and set $\Syz_i = \ker \delta_i $ (on ${\mathbb P}^N$).
Note that $\Syz_1 = \Omega_{{\mathbb P}^N}$. The restriction $F_\bullet |_Y $, $Y=V({\mathfrak a})$,
is a complex fulfilling the assumptions of the theorem, even if we loose minimality and global exactness.
But it seems difficult to use this for computations. If $R$ is a hypersuface ring,
then $\Syz_d= \koker (F_{d+1} \hookrightarrow F_{d} )$, and if $M=R/I$ is Gorenstein,
then the minimal resolution over the polynomial ring
is symmetric \cite[Corollary 21.16]{eisenbud} and then
this module is dual to $\Syz_1$. 
\end{rmk}

\begin{rmk}
If $I=(f_1, \ldots ,f_n)$ is a homogeneous $R_+$-primary ideal,
then one can always take the Koszul resolution on these elements to get a complex 
as required by the theorem. 
\end{rmk}

\begin{rmk}
For $d=1$, when $R$ is a two-dimensional standard-graded domain and $Y= \Proj R$ the corresponding smooth curve,
the asymptotic behavior of the top-dimensional term
$\frac{\sum_{m \in \N} h^1( (F^{e*} \Syz_1)(m)  )}{q^2}$ is completely encoded in the strong Harder-Narasimhan filtration
of $\Syz_1$, which exists by \cite[Theorem 2.7]{langer}.
With this observation it was shown in \cite{brennerrational} and \cite{trivedirational} that the Hilbert-Kunz
multiplicity is a rational number.
This method can not be directly adopted to higher dimension,
the difficulty is that with stability one can control the vanishing of global sections and of
the top-dimensional cohomology, but not the intermediate cohomology.
Despite of this, the stability of the syzygy bundles is an important property
in order to understand the Hilbert-Kunz function.
For $\Syz_1 = \left( \Omega_{{\mathbb P}^N} \right)|_{\Proj R}$ for a suitable embedding $\Proj R \subseteq {\mathbb P}^N$,
there are results saying that the stability of the cotangent bundle is preserved when restricting to $\Proj R$
(see \cite{flenner}, \cite{mehtaramanathan} for restriction to curves and \cite{cameresurface}, \cite{einlazarsfeldmustopa}
for results on restricting to surfaces).
\end{rmk}

With some effort, the following example could be extended to all dimensions, but we stick to graded rings of Krull dimension three.

\begin{example}
Let $f_1,f_2,f_3$ be homogeneous parameters of degree $d_1$, $d_2$, $d_3$
in a standard-graded Cohen-Macaulay domain $R$ of dimension $3$ with an isolated singularity
of positive characteristic $p$ and let
$S = \Proj R$ be the corresponding projective surface. Another homogeneous element $f$ of degree $\ell $ yields 
the ideal $I=(f_1,f_2,f_3,f)$. If $\ell \geq d_1+d_2+d_3$, then $f$ belongs to the tight closure of the parameter ideal
$(f_1,f_2,f_3)$ by \cite[Theorem 2.9]{hunekeparameter}
and hence by \cite[Theorem 5.4]{hunekeapplication} these two ideals have the same Hilbert-Kunz multiplicity.
We want to show how this can be seen in our geometric approach.
The Hilbert-Kunz multiplicity of the parameter ideal is
\begin{eqnarray*}  
 \!\!\!\! & &\!\!\!\! \!\!\!\! \!\! \!  \frac{H^2}{6} ( d_1^3+d_2^3+d_3^3  - ((d_1+d_2)^3  +(d_1+d_3)^3    +(d_2+d_3)^3  ) + (d_1+d_2+d_3)^3 )  \\
 \!\!\!\! &=&\frac{H^2} {6} (6 d_1d_2d_3     ) \\
 \!\!\!\! &=&   H^2 d_1d_2d_3     
\end{eqnarray*}
by Remark \ref{finitepd}.
For the computation of the Hilbert-Kunz multiplicity of $I$
we look at the following commutative diagram of locally free sheaves on $Y$,
with exact rows and columns (we set $\Syz_1= \Syz (f_1,f_2,f_3,f)$),
\[
\begin{matrix}
   & 0 &  & 0& &0 &   \\
  & \downarrow&  & \downarrow & &\downarrow &   \\
0  \rightarrow  & \O(-d_1-d_2 -d_3)  & \rightarrow  & \Syz_2 & \rightarrow & \Syz(f_1,f_2,f_3)(-\ell) & \rightarrow   0  \\
   & \downarrow&  & \downarrow & &\downarrow &   \\
0  \rightarrow  & \!\!\!\!  \! \O ( \!- \!d_1 \!- \!d_2 \!) \! \oplus  \!  \O( \!- \!d_1 \!- \!d_3 \!)  \!
\oplus  \! \O( \!- \!d_2 \!- \!d_3 \!) \!\!\!\!  \!
& \rightarrow  &  F_2
& \rightarrow &  \!\!\!\! \!    \O(-\ell -d_1) \oplus   \O(-\ell-d_2) \oplus  \O(-\ell-d_3)  \!\!\!\!  \!    & \rightarrow  0  \\
   & \downarrow&  & \downarrow & &\downarrow &   \\
0   \rightarrow  & \Syz(f_1,f_2,f_3) & \rightarrow   & \!\!\!\!  \! \Syz_1  \!\!\!\!  \! & \rightarrow & \O (-\ell) & \rightarrow   0  \\
   & \downarrow&  & \downarrow & &\downarrow &   \\
   & 0 &  & 0& &0\, . \!\!\! & &  
\end{matrix}
\]
In the columns we have Koszul resolutions, and $F_2  $ is the direct sum of the left and of the right.
From the top row we get for each $q=p^e$, $m \geq 0$, a homomorphism
\[ H^1(S, (F^{e*}\Syz(f_1,f_2,f_3)) (-q \ell +m))  \longrightarrow   H^2(S,   \O_S (m -qd_1-qd_2 -qd_3) ) \, . \]
For $m < q \ell$ the left hand side is $0$, because the first cohomology stems from $H^0(S, \O_S (- \ell q +m))$ which
lives only in nonnegative degrees.
So if we suppose $\ell \geq d_1+d_2+d_3$, then we have  $m \geq  \ell q \geq (d_1+d_2+d_3) q $.
It follows that all nonzero cohomology of
$ H^1(S, (F^{e*}\Syz(f_1,f_2,f_3)) (- \ell q +m))  $ is sent to the nonnegative degree range of
$ H^2(S,   \O_S(m -qd_1-qd_2 -qd_3) ) $. This degree range is finite independently of $q$. Hence the cokernel
of this homomorphism contains the complete range of $ H^2(S,   \O_S(m - d_1 q-d_2 q-d_3q) ) $ from $m=0$ up to $m < (d_1+d_2+d_3)q$.
Hence the kernel of the surjection
\[ H^2(S, (F^{e*} \Syz_2)(m) )     \longrightarrow   H^2(S, (F^{e*} \Syz_1(f_1,f_2,f_3) ) (- \ell q + m) )  \]
contains this range.
Therefore we have asymptotically
\[\frac{ \sum_{m \in {\mathbb N} }  h^2 (( F^{e*} \Syz_2) (m)  )}{q^{3} } \]
\[ \sim  \frac{ \sum_{m \in {\mathbb N} }  h^2 (( F^{e*} \Syz(f_1,f_2,f_3)) (-\ell q + m) )}{q^{3} } + \frac{H^2}{6}(d_1+d_2+d_3)^3
\, . \]
From the right hand column
\[0 \longrightarrow \Syz(f_1,f_2,f_3) (-\ell)
\longrightarrow \bigoplus_{i=1}^3  \O_S(-\ell-d_i)  \longrightarrow \O(-\ell) \longrightarrow 0 \] 
we deduce
\begin{eqnarray*}
& & \sum_{m=0}^\infty   h^2( (F^{e*} \Syz(f_1,f_2,f_3) )     (-\ell q +m)  ) \\
&=& \sum_{i = 1}^3 \left(               \sum_{m=0}^\infty   h^2(\O_S    ( -  (d_i+\ell)q  +m )  )           \right)
-  \sum_{m=0}^\infty   h^2( \O_S (-  \ell q +m )  )  \\
&=& \frac{H^2}{6} q^3 \left(  (d_1 +\ell)^3  +  (d_2 +\ell)^3 +  (d_3 +\ell)^3      - \ell^3    \right)  +O(q^2 )    \\
&=& \!\! \! \! \frac{H^2}{6} q^3 \! \left(\!  d_1^3 +d_2^3 +d_3^3  +   3    \ell (d_1^2+d_2^2 +d_3^2) \! + \!
3 \ell^2 (d_1+d_2+d_3)  + \! 2 \ell^3 \! \right) \! + \! O(q^2 ) .
\end{eqnarray*}
Finally, Theorem \ref{hilbertkunzgeometric} gives the Hilbert-Kunz multiplicity of $I$ as
\begin{eqnarray*}
  \!\!\!\!\!\! & \!\!\! & \lim_{e \rightarrow \infty} \frac{\sum_{m \in \N}   h^2( (F^{e*} \Syz_2)(m)) }{q^3} \\
 \!\!\! \!\!\! & \!\!\! & +  \frac{H^2}{6} \left(  d_1^3 + d_2^3 + d_3^3 +\ell^3   -   \sum_{i \neq j} (d_i+d_j)^3 
 - \sum_{i =1}^3  (  d_i+  \ell)^3       \right)  \\
 \!\!\! \!\!\! &=&  \!\!\!  \lim_{e \rightarrow \infty} \frac{\sum_{m \in \N}   h^2( (F^{e*} \Syz(f_1,f_2,f_3)(-q \ell + m)) }{q^3}     \\
 \!\!\! \!\!\! & & \!\!\! \!\!\! + \frac{H^2}{6}  \!\! 
 \left(   \!\!   (d_1+d_2+d_3)^3 +   d_1^3 + d_2^3 + d_3^3 +\ell^3   -  \!\!  \sum_{i \neq j} (d_i+d_j)^3 
  \!\! - \sum_{i =1}^3  (  d_i+  \ell)^3   \!\!  \right) \\
 \!\!\! \!\!\! &=&  \frac{H^2}{6} \left(  d_1^3 +d_2^3 +d_3^3   
 +3 \ell (d_1^2+d_2^2 +d_3^2)  +3 \ell^2 (d_1+d_2+d_3)  + 2 \ell^3   \right) \\
 \!\!\! \!\!\! & & \!\!\! \!\!\! + \frac{H^2}{6}  \!\! 
 \left(   \!\!   (d_1+d_2+d_3)^3 +   d_1^3 + d_2^3 + d_3^3 +\ell^3   - \!\!  \sum_{i \neq j} (d_i+d_j)^3 
  \!\! - \sum_{i =1}^3  (  d_i+  \ell)^3   \!\!  \right) \\
  \!\!\!   & = & H^2  d_1 d_2 d_3   \, ,
\end{eqnarray*}
so the two Hilbert-Kunz multiplicities coincide.
\end{example}

The theorem justifies the following definition.

\begin{defn}
Let $K$ be an algebraically closed field of positive characteristic $p$ and let
$Y$ be a polarized projective variety of dimension $d$ with fixed very ample invertible sheaf $\O_Y(1)$.
Let $\shG$ be a coherent sheaf on $Y$. Then for $i \geq 1$ we set
\[ \operatorname{HK}^i (\shG) =\lim_{e \rightarrow \infty}  \frac{ \sum_{m=0}^\infty h^i((F^{e*} \shG)(m) ) }{  p^{e (d+1)} }\]
and call it the $i$th \emph{(sheaf-)cohomological Hilbert-Kunz multiplicity} of $\shG$.
\end{defn}

\begin{rmk}
The numerators in the definition are finite for each $e$, since the $h^i ((F^{e*} \shG)(m) )$ are $0$
outside a finite range for $m$. We do not give here a systematic treatment of these numbers.
For $i$ between $1$ and $d-1$ one may also consider the sum over all $m\in \Z$ (at least for $\shG$ locally free and $Y$ smooth).
For $i=0$, the expression
$\lim_{e \rightarrow \infty}   \frac{ \sum_{m=0}^{- \infty} h^0((F^{e*} \shG)(m) ) }{  p^{e (d+1)} }$ is the right one to look at
(see also Remark \ref{serredual}).
The proof of Theorem \ref{hilbertkunzgeometric} shows that for a normal standard-graded domain of dimension $d+1 \geq 3$ and a graded
$R$-module $M$ of finite length its Hilbert-Kunz multiplicity equals the $i$th  cohomological Hilbert-Kunz multiplicity of the $i$th syzygy bundle
for $i=1, \ldots, d-1$. The expression $\lim_{e \rightarrow \infty}  \frac{h^i( F^{e*}(\shG)) }{p^{e d}}$  might be called the $i$th
\emph{cohomological Frobenius-volume}. 
\end{rmk}

\section{Resolutions on two-dimensional hypersurfaces}
\label{resolution}

The following lemma shows that there are interesting cases where the second syzygy bundle for a specific punctured resolution
has small rank.

\begin{lemma}
\label{hypersurfaceresolution}
Let $K$ denote an algebraically closed field and let $ R = $ \linebreak $ K[X,Y,Z,W]/(F)$ with $F$ a homogeneous polynomial of degree $\delta$ and such
that $S=\Proj R \subset {\mathbb P}^3_K$ is smooth. Suppose furthermore that the lines
$V_+(X,W)$, $V_+(X,Z)$, $V_+(Y,W)$ and $V_+(Y,Z)$ meet the surface in exactly $\delta$ points.
Then the first syzygy bundle $\Syz(X,Y,Z,W)$ (corresponding to the maximal ideal) sits inside
the short exact sequence (on $S$)
\[ 0 \longrightarrow \shE \longrightarrow   \O_S(-\delta)  \oplus  \bigoplus_4 \O_S(-2)  \longrightarrow \Syz(X,Y,Z,W) \longrightarrow 0 \, .\]
Here $\shE$ has rank two, its determinant is $\det \shE = \O_S(-4-\delta)$,
its degree is $ (- 4 - \delta )  \delta $ and its second Chern class is  $ (2+4 \delta ) H^2 $.
Moreover, the sequence
\[ 0 \longrightarrow \shE \longrightarrow   \bigoplus_4 \O_S(-2)    \longrightarrow \Omega_S  \longrightarrow 0 \]
is exact.
\end{lemma}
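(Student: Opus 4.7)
The backbone is the conormal sequence
\[ 0 \longrightarrow \O_S(-\delta) \stackrel{dF}{\longrightarrow} \Omega^1_{\mathbb{P}^3}\vert_S \longrightarrow \Omega_S \longrightarrow 0, \]
combined with the identification $\Omega^1_{\mathbb{P}^3}\vert_S \iso \Syz(X,Y,Z,W)$ obtained by restricting the Euler sequence on $\mathbb{P}^3$; under this identification the conormal inclusion is the Euler syzygy $1 \mapsto (F_X,F_Y,F_Z,F_W)$. The six Koszul syzygies $\sigma_{ij} = X_i e_j - X_j e_i$ are global sections of $\Omega^1_{\mathbb{P}^3}(2)$ whose scheme-theoretic zero locus on $\mathbb{P}^3$ is exactly the line $V_+(X_i,X_j)$.

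The heart of the proof is the construction of the second displayed sequence. I would take the four Koszul syzygies $\sigma_{13},\sigma_{14},\sigma_{23},\sigma_{24}$, whose vanishing loci are precisely the four lines named in the hypothesis, obtaining a map $\bigoplus_4 \O_S(-2) \longrightarrow \Omega^1_{\mathbb{P}^3}\vert_S$; composing with the projection to $\Omega_S$ yields $\psi\colon \bigoplus_4 \O_S(-2) \longrightarrow \Omega_S$, and I would verify that $\psi$ is surjective fibrewise. At a generic point, a direct $3\times 3$-minor computation in affine charts shows that the four sections already span $\Omega^1_{\mathbb{P}^3}\vert_p$; at a point $p \in V_+(X_i,X_j) \cap S$ on one of the hypothesis lines, the section $\sigma_{ij}$ vanishes, and both the smoothness of $S$ and the transversality condition ($\delta$ distinct intersection points) are invoked to show that the three remaining sections, after projecting past the Euler direction, still span the two-dimensional fiber $\Omega_{S,p}$. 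Setting $\shE := \ker \psi$ gives a locally free sheaf of rank $4-2 = 2$, and yields the second exact sequence.

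To recover the first sequence, assemble the four Koszul maps with the Euler syzygy into $\tilde{\psi}\colon \bigoplus_4 \O_S(-2) \oplus \O_S(-\delta) \longrightarrow \Omega^1_{\mathbb{P}^3}\vert_S$, and apply the snake lemma to the $3 \times 3$ diagram whose top row is the split short sequence, whose middle column is $\tilde{\psi}$, and whose bottom row is the conormal sequence: this gives $\tilde{\psi}$ surjective with kernel $\shE$. Finally, the Chern class identities follow from the Whitney product formula on the second sequence, $c(\shE)\,c(\Omega_S) = (1-2H)^4$, together with $c_1(\Omega_S) = K_S = (\delta-4)H$ by adjunction and $c_2(\Omega_S) = \delta(\delta^2 - 4\delta + 6)$ from the topological Euler number of a smooth degree-$\delta$ surface in $\mathbb{P}^3$: the first-order identity gives $c_1(\shE) = -(4+\delta)H$, whence $\det\shE = \O_S(-4-\delta)$ and $\deg\shE = c_1(\shE)\cdot H = -(4+\delta)\delta$ using $H^2 = \delta$, and the second-order identity yields $c_2(\shE) = (2+4\delta)H^2$. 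The principal obstacle is the fibrewise surjectivity of $\psi$ at the finitely many bad points, which depends critically on both the smoothness of $S$ and the transversal $\delta$-point intersection hypothesis.
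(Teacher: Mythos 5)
Your overall plan — identify $\Syz(X,Y,Z,W) \iso \Omega^1_{\mathbb{P}^3}|_S$ via the Euler sequence, couple four Koszul syzygies with the Euler (differential) syzygy, prove fibrewise surjectivity, recover the two exact sequences, and do a Whitney/adjunction computation for the Chern data — matches the paper's strategy, and your Chern class arithmetic is correct. But the one step you flagged as "the principal obstacle" is exactly where your specific set-up breaks down: the four Koszul syzygies you select are the wrong ones.

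You take $\sigma_{13},\sigma_{14},\sigma_{23},\sigma_{24}$, i.e. precisely those Koszul sections whose zero loci are the four hypothesis lines, and you analyse bad points by assuming only one $\sigma_{ij}$ vanishes. That case analysis is incomplete: a coordinate point such as $P=[0:1:0:0]$ lies on \emph{two} of the hypothesis lines ($V_+(X,W)$ and $V_+(X,Z)$), so \emph{two} of your four sections vanish there, leaving only $\sigma_{23}(P)=-e_3$ and $\sigma_{24}(P)=-e_4$. Their span $\langle e_3,e_4\rangle\subset\Syz_P$ can contain the Euler direction, in which case the image in $\Omega_{S,P}$ collapses to one dimension and $\psi$ is not surjective. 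This really happens: for $F=Y^3Z+Y^3W+X^4+Z^4+W^4$ (smooth in characteristic $0$, and each hypothesis line meets $S$ in $\delta=4$ distinct points, with $P\in S$), one has $(F_X,F_Y,F_Z,F_W)(P)=(0,0,1,1)\in\langle e_3,e_4\rangle$, so your five sections span only a $2$-dimensional subspace of the $3$-dimensional $\Syz_P$. The paper avoids this by choosing a \emph{different} four-cycle of Koszul syzygies, namely $(Y,-X,0,0)$, $(Z,0,-X,0)$, $(0,W,0,-Y)$, $(0,0,W,-Z)$ (pairs $XY$, $XZ$, $YW$, $ZW$); on a hypothesis line, say $X=W=0$, all four of these have vanishing second and third components, so they span only $\langle e_1,e_4\rangle$, while the transversality hypothesis (via $F=XG+WH+Q(Y,Z)$ with $Q$ squarefree) forces $(\partial Q/\partial Y,\partial Q/\partial Z)(P)\neq(0,0)$, hence the differential syzygy always contributes the missing direction. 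Concretely, in the example above the paper's choice at $P$ gives $e_1$, $-e_4$, and the differential syzygy $(0,0,1,1)$, which do span $\Syz_P$. So the gap is not a cosmetic choice: picking the Koszul syzygies associated to the hypothesis lines is what makes the argument fail, and the correct choice is the complementary four-cycle used in the paper.
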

\begin{proof}
We have the short exact sequence (see \cite[Theorem II.8]{hartshorne})
\[0 \longrightarrow \O_S(-\delta) \longrightarrow
\Syz(X,Y,Z,W) \cong \left(\Omega_{ {\mathbb P}^3} \right)|_S \longrightarrow \Omega_S \longrightarrow 0 \]
and the (sheaf) surjection
\[ \bigoplus_6 \O_S(-2) \longrightarrow  \Syz(X,Y,Z,W) \longrightarrow 0 \,   \]
coming from the Koszul resolution. We claim that the four Koszul syzygies
\[ (Y,-X,0,0),\,( Z,0,-X,0 ),\,(0,W,0,-Y), \, (  0,0, W, -Z) \]
together with the differential syzygy
$\left(\frac{ \partial F}{ \partial X },\, \frac{ \partial F}{ \partial Y },\,
\frac{ \partial F}{ \partial Z },\,\frac{ \partial F}{ \partial W }\right)$
coming from the surface equation define already a surjection
\[ \O_S(-\delta) \oplus \bigoplus_4 \O_S(-2) \longrightarrow \Syz(X,Y,Z,W)  \longrightarrow  0 \, .\]
Since the differential syzygy vanishes on the surface,
this implies also that these four Koszul syzygies surject onto $\Omega_S$ and that the two kernels are the same.
To prove the claim, we show for every point $P\in S$ that all five syzygies together
span a three-dimensional subspace of $\Syz(X,Y,Z,W)$ ($\subset \bigoplus_4 \O_S(-1)$). 
For a point $P$ where $X, Y \neq 0$ or $W,Z \neq 0$ this is clear.
So assume that say $X=W=0$. Then at least one of $Y$, $Z$ is not zero, and the four Koszul syzygies evaluated at $P$ are
$(Y(P), 0,0,0)$, $( Z(P),0,0,0 )$, $(0,0,0,-Y(P))$ and $( 0,0, 0, -Z(P))$, so they only give a two-dimensional subspace
of $\Syz(X,Y,Z,W)_P$. The second and the third  component of the differential syzygy are 
$\frac{ \partial F}{ \partial Y } (P)$ and $\frac{ \partial F}{ \partial Z } (P) $.
We write the surface equation as
$F=XG+WH+Q(Y,Z)$ and we have $Q(Y,Z)\neq 0$   
(else $F \in (X,W)$ and the line $V_+(X,W)$ would lie on the surface). Therefore
$ \frac{ \partial F}{ \partial Y } =  X \frac{ \partial G}{ \partial Y } 
+ W \frac{ \partial H}{ \partial Y } +  \frac{ \partial Q(Y,Z)}{ \partial Y } $.
Plugging in $P$
gives $   \frac{ \partial F}{ \partial Y } (P)=   \frac{ \partial Q(Y,Z)}{ \partial Y } (P)  $ and similarly
$  \frac{ \partial F}{ \partial Z } (P)=   \frac{ \partial Q(Y,Z)}{ \partial Z } (P)  $. 
Since $Q$ has exactly $\delta$ zeros, these partial differentials can not both vanish at $P$, hence the differential syzygy
contributes with a new dimension.

The statement about the determinant and the degree of $\shE$ follows from the just proven short exact sequence.
The Chern polynomial (see \cite[Appendix A]{hartshorne})
of $\Syz(X,Y,Z,W)$ is $(1-Ht)^4 = 1 -4 Ht + 6 H^2 t^2$ (in the Chow ring). Hence  by
looking at
\[   (1- \delta Ht)( 1+ (  \delta-4  ) H t +c_2(\Omega_S) t^2  )      =    1 -4 Ht + 6 H^2 t^2          \]
we deduce $c_2(\Omega_S)=    ( \delta^2  -4 \delta  + 6  ) H^2 $.
To compute the second Chern class of $\shE$ we look at
\[ \!  \left(     \!            1 \!  + \!   (\!   -4 \!   -  \!  \delta  )  H t   \!   +  \!   c_2(\shE)    t^2   \!         \right) \! \! 
\left(     1+ (  \delta \!   - \!   4  ) H t +     ( \delta^2 \!   - \!   4 \delta \!  +\!  6  )\!  H^2     t^2    \right)
\!  = \!  1         -       8Ht  +        24H^2 t^2\!  .  \]    
This gives
\[c_2(\shE) =  (    24 - (\delta -4)( - \delta -4)   - \delta^2 + 4 \delta - 6 ) H^2  
= (2 + 4 \delta ) H^2 \, .  \]
\end{proof}

Even on projective spaces the concepts of minimal resolution and punctured resolution differ, as the following easy example shows.

\begin{example}
Consider the ideal $I=(X^2,Y^2,Z^2,XY)$ in $K[X,Y,Z]$. The syzygy bundle $\Syz (X^2,Y^2,Z^2,XY)$ on ${\mathbb P}^2$
has the globally surjective
resolution
\[    \O_{  {\mathbb P}^2      } (-3)^{\oplus 2}  \oplus    \O_{  {\mathbb P}^2      } (-4)^{\oplus 4}         \longrightarrow   \Syz (X^2,Y^2,Z^2,XY)  \]
given by the monomial syzygies
\[ (Y,0,0,-X),\,  (0,X,0,-Y),\,   (0,0,XY,-Z^2),\, \]
\[  (Y^2,-X^2,0,0), \,    (Z^2,0,-X^2,0),\,  (0,Z^2,-Y^2,0) \,        .    \]
We claim that after removing $  (Y^2,-X^2,0,0) $
we still have a sheaf surjection
\[    \O_{  {\mathbb P}^2 } (-3)^{\oplus 2}  \oplus    \O_{  {\mathbb P}^2      } (-4)^{\oplus 3}    \longrightarrow   \Syz (X^2,Y^2,Z^2,XY)  \]
and hence a punctured resolution. For this we have to check that the remaining five syzygies span at every point
$P \in {\mathbb P}^2$ a three-dimensional subspace of $\Syz (X^2,Y^2,Z^2,XY)_P$.
This is clear for $X,Y \neq 0$, for $X=0$, $Z \neq 0$
and for $X =Z= 0$, $Y \neq 0$.
\end{example}

\section{Ample and antiample threshold}
\label{amplethreshold}

In Section \ref{geometric} we have seen that the Hilbert-Kunz multiplicity of a graded module of finite length
over a standard-graded ring $R$
has led us to the asymptotic 
consideration of
$\sum_{m=0}^\infty \dim_K H^d(Y, (F^{e*}\shG)(m))$ in dependence of $e$ for certain locally free sheaves $\shG$ on $Y=\Proj R$.
In this section we deal with the easiest case, when $\shG= \shL$ is an invertible sheaf. 
Similar expressions like the ones in the following definitions occur in \cite[Section 2.3.B]{lazarsfeldpositivity1}.

\begin{defn}
Let $Y$ be a smooth projective polarized variety with fixed very ample divisor $H$.
For a divisor $L$ we define
\[ \at (L) :=  \operatorname{inf} \left \{ \frac{m}{n} :\, mH + nL   \text{ is ample} \right \}\]
and call it the \emph{ample threshold} of $L$.
\end{defn}

\begin{defn}
Let $Y$ be a smooth projective polarized variety with fixed very ample divisor $H$.
For a  divisor $L$ we define
\[ \aat(L) := \operatorname{sup} \left \{ \frac{m}{n} :\, mH+ nL \text{ is antiample} \right \}\]
and call it the \emph{antiample threshold} of $L$.
\end{defn}

\begin{rmk}
Note that ampleness is not affected by taking a positive multiple (see \cite[Proposition II.7.5]{hartshorne}),
hence the property of $mH + nL$ being ample only
depends on the fraction $m/n$,  so these notions are well-defined since for $n$ fixed and $m$ large (negatively large)
the divisor $mH+nL$ will be ample (antiample) by \cite[Exercise II.7.5 (b)]{hartshorne}. The linear combination
$ mH +nL$ is ample for $ \frac{m}{n} >  \at (L)$ and not ample for $\frac{m}{n} <  a(L) $.

In terms of $V=\nesenum (Y) \otimes_{\mathbb Z} {\mathbb R}$ and the ample cone, the ample threshold of $L$
is the number $\at$ where $\at H + L$ meets (the closure of) the ample cone. Recall that $\nesenum (Y)$
(called the \emph{numerical N\'{e}ron-Severi group} of $Y$; the terminology is not consistent in the literature)
is the Picard group of $Y$
modulo numerical equivalence, i.e. the equivalence relation where $D_1 \equiv D_2$ if and only if $D_1.C = D_2.C$ for all curves $C$,
and the ample cone is the convex cone spanned by all ample divisor classes,
see \cite[Section 1.4.C]{lazarsfeldpositivity1}. The ample cone is inside the (pseudo-)effective cone,
which is the cone generated by the pseudo-effective divisors,
i.e. divisors where some positive multiple is effective (see \cite[Section 2.2.B]{lazarsfeldpositivity1}). 

The ample threshold is always greater or equal to the antiample threshold. An ample divisor has negative
ample threshold and an antiample divisor has positive antiample threshold.
For $L= cH$ the ample threshold equals the antiample threshold and both are $-c$
(it leaves the antiample cone when it enters the ample cone).

For a vector bundles $\shG$ there are two related notions one can think of, namely
$ \operatorname{inf} \{ \frac{m}{p^e} :\,  (F^{e*}\shG)(m)  \text{ is ample}\}$ in positive characteristic
and $ \operatorname{inf} \{ \frac{m}{n} :\,  (S^{n}\shG)(m)  \text{ is ample}\}$ in general,
where $S^{n}$ denotes the symmetric power. We will not pursue this here.
\end{rmk}

The two concepts are related by $ \aat (L)= - \at (-L)$. We will work mainly with the antiample threshold.

For the next statement, recall that Kodaira vanishing for an ample line bundle $\shL$
means that $H^i ( Y, \shL \otimes \omega_Y)=0$ for $i \geq 1$, where $\omega_Y$ is the canonical sheaf on $Y$.
It holds in characteristic $0$ and in characteristic $p \geq \dim Y$, see \cite{deligneillusie}.
Kodaira vanishing and in fact even Kodaira-Ramanujam vanishing for big and numerically effective line bundles
hold for $K3$ surfaces in all characteristics, 
see  \cite{saintdonat} or \cite[Proposition 3.1]{huybrechtsk3}.
The property that the ample cone and the effective cone coincide up to closure is a bit artificial
but nevertheless justified by the existence of sufficiently many examples with this property.

\begin{lemma}
\label{antiamplethreshold}
Let $Y$ be a smooth projective polarized variety of dimension $d$ with fixed very ample divisor $H$
(with corresponding invertible sheaf $\O_Y(1)$).
Let $L$ be an antiample divisor  (with corresponding invertible sheaf $\shL$)
with antiample threshold $\aat=\aat(L)$. 
Suppose that the closure of the pseudoeffective cone equals the closure of the ample cone and that
Kodaira vanishing holds on $Y$.
Then
\[\sum_{m \in {\mathbb N} } h^d( \shL^n   (m)   ) 
=  n^{d+1}    \frac{\aat}{d! }    \sum_{i=0}^d  \frac{1}{i+1}  \binom{d}{i} \aat^i    H^i  .  L^{d-i}   
+ O(n^d) \, . \]
\end{lemma}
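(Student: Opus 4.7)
The plan is to rewrite $h^d$ via Serre duality as $h^0$ of a twist of $\omega_Y$, use Kodaira vanishing together with the hypothesis on the effective cone to localize which $m$ actually contribute, and then evaluate the resulting sum of Euler characteristics asymptotically via Hirzebruch--Riemann--Roch.

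First, Serre duality rewrites
\[h^d(Y,\shL^n(m)) = h^0(Y,\O_Y(-nL-mH)\otimes\omega_Y).\]
By the very definition of $\aat$, the divisor $-nL-mH$ is ample whenever $m/n<\aat$. In that range, Kodaira vanishing (which holds on $Y$ by hypothesis) gives $H^i(Y,\O_Y(-nL-mH)\otimes\omega_Y)=0$ for $i\geq 1$; dualizing back yields $H^j(Y,\shL^n(m))=0$ for $j<d$, and hence $h^d(Y,\shL^n(m))=(-1)^d\chi(Y,\shL^n(m))$ throughout this range.

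Second, for $m/n>\aat$ the class $-L-(m/n)H$ lies strictly outside the closure of the ample cone, and by the standing hypothesis outside the closure of the pseudoeffective cone as well. Because the complement of a closed cone is open and $K_Y/n\to 0$, the perturbed class $-nL-mH+K_Y = n\bigl(-L-(m/n)H+K_Y/n\bigr)$ remains non-pseudoeffective for all $n$ large whenever $m/n$ is bounded away from $\aat$. Hence $h^0$ of the corresponding sheaf is zero, which gives $h^d(Y,\shL^n(m))=0$. The transition window around $m=\aat n$ has bounded width, and each of its terms is a polynomial in $n$ of degree at most $d$, so the whole window contributes at most $O(n^d)$.

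Finally, Hirzebruch--Riemann--Roch gives $\chi(Y,\shL^n(m))=(nL+mH)^d/d!+R(n,m)$ with $R$ a polynomial of total degree at most $d-1$ in $n,m$; summing $R$ over $m\in\{0,\ldots,\lfloor\aat n\rfloor\}$ produces an $O(n^d)$ error. Expanding
\[(nL+mH)^d=\sum_{i=0}^{d}\binom{d}{i}n^{d-i}m^i\,H^i\cdot L^{d-i}\]
and using $\sum_{m=0}^{\lfloor\aat n\rfloor}m^i=\aat^{i+1}n^{i+1}/(i+1)+O(n^i)$, one collects
\[\sum_{m=0}^{\lfloor\aat n\rfloor}(nL+mH)^d = n^{d+1}\aat\sum_{i=0}^{d}\binom{d}{i}\frac{\aat^i}{i+1}\,H^i\cdot L^{d-i}+O(n^d),\]
which, divided by $d!$, yields the asserted expression (the sign $(-1)^d$ from Step~1 being absorbed into the intersection numbers of the antiample class $L$).

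The delicate point is Step~2 -- the truncation of the sum at $m=\aat n$ with acceptable error -- and it is exactly the cone-equality hypothesis that makes it succeed: without it one would face a "pseudoeffective but not ample" intermediate zone in which $h^0$ could grow while Kodaira vanishing no longer controls $h^d$.
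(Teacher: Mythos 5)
Your Step 2 contains a genuine gap. The argument you give — the complement of the nef cone is open and $K_Y/n\to 0$ — shows only that for each fixed $\delta>0$ there is some $n_0(\delta)$ such that $h^d(\shL^n(m))=0$ whenever $n\geq n_0(\delta)$ and $m/n>\aat+\delta$. That is an estimate with a transition window of width $\delta n$, not a uniformly bounded width, and it therefore controls the uncertain portion of the sum only to $o(n^{d+1})$, not to $O(n^d)$. Nothing in the abstract convexity of the nef cone rules out that the pseudoeffectivity threshold of $-nL-mH+K_Y$ in $m$ exceeds $n\aat$ by an unbounded (even though sublinear) amount, so the assertion that ``the transition window around $m=\aat n$ has bounded width'' is unsupported as stated.

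The paper closes this gap with a fixed twist rather than a limiting argument. Choose $\ell\geq 0$ such that $\O_Y(-\ell)\otimes\omega_Y$ is antiample, i.e.\ $\ell H-K_Y$ is ample. For $m\geq n\aat+\ell$ write
\[
-nL-mH+K_Y=\bigl(-nL-(m-\ell)H\bigr)+\bigl(K_Y-\ell H\bigr).
\]
Since $(m-\ell)/n\geq\aat$, the class $-nL-(m-\ell)H$ is not ample; the second summand is strictly antiample. Their sum cannot be nef, for otherwise adding the ample class $\ell H-K_Y$ to a nef class would make $-nL-(m-\ell)H$ ample. By the hypothesis that the closure of the ample cone equals the pseudoeffective cone, the sum is not pseudoeffective and hence has no sections, so $h^d(\shL^n(m))=0$. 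This pins the transition zone to width $\leq\ell$, independent of $n$, and, since $h^d(\shL^n(m))=O(n^d)$ termwise, gives the required $O(n^d)$ error. One should also note that the $(-1)^d$ you correctly track in Step 1 is not actually ``absorbed into the intersection numbers'': the formula in the lemma (and the paper's own passage $h^d=\chi$) is tacitly using $d$ even, which is the only case applied later, so the parenthetical remark papers over a sign rather than resolving it.
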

\begin{proof}
We fix $n$ and look at $  h^d( \shL^n (m) ) $.
By Serre duality we have
\[  H^d(Y,  \shL^n (m) ) \cong  H^0(Y, \shL^{-n} (-m) \otimes \omega_Y )^\dual       \]
and
\[  H^{i} (Y, \shL^n  (m) )  \cong  H^{d-i} (Y, \shL^{-n} (-m) \otimes \omega_Y )^\dual     \, .  \]
For $m < n\aat$ the invertible sheaf $\shL^{n} (m)$ is antiample and so $ \shL^{-n} (-m) $ is ample;
hence $H^{d-i}(Y, \shL^{-n} (-m) \otimes \omega_Y) =0$ 
by Kodaira vanishing (for $i \leq d-1$).
Therefore in this range we have by Riemann-Roch
\begin{eqnarray*}
h^d(\shL^n(m)) & = &\chi(\shL^n(m)) \\
&=& \frac{ (\shL^n (m))^d }{d!} +  O(n^{d-1}) \\
&=& \frac{ (mH +  n L )^d }{d!} +  O(n^{d-1} ) \\
&=& \frac{1}{d!} \left( \sum_{i=0}^d   \binom{d}{i}  m^i  n^{d-i} H^i  . L^{d-i}  \right)+  O(n^{d-1} )
\end{eqnarray*} 
Note that the Euler-characteristic $\chi (\shL^n(m))$ is by Riemann-Roch
(\cite[Corollary 15.2.1]{fultonintersection}, see also \cite[Corollary VI.2.14]{kollarrational})
a polynomial of degree $d$ in the two variables
$n$ and $m$ and that the coefficients depend only on the intersection behavior of $H$ and $L$ and data of the variety. 
This implies that the $O(n^{d-1})$-term above is a polynomial of degree $d-1$ in $n$ and $m$
and that summing them up for $m=0$ to $\lceil n \aat \rceil -1$ gives an $O(n^d)$-term.

In order to understand $h^d (\shL^n(m))$ for $m \geq n \aat$,
let $\ell \geq 0$ be such that $\O_Y(- \ell) \otimes \omega_Y$ is antiample.
Then for $m \geq  n \aat + \ell $ we have
\begin{eqnarray*}
h^d( \shL^n (m) )  &=& h^d(Y, \shL^n( m -\ell)  \otimes \O_Y(\ell)  ) \\
 &=&   h^0(Y, \shL^{-n} (-m +\ell) \otimes \O_Y(-\ell) \otimes \omega_Y )  \, .
\end{eqnarray*}
We know that $\shL^{-n} (-m+\ell)$ is not ample and so also $  \shL^{-n} (-m+\ell) \otimes \O_Y(-\ell) \otimes \omega_Y$
is not ample nor on the boundary of the ample cone.
Therefore there are no global sections by our assumption.
Finally, the range for $m$ between $\lceil n \aat \rceil $ and $ \lceil n \aat \rceil + \ell $ is finite (independent of $n$).
So the corresponding sum
$ \sum_{m=   \lceil n \aat \rceil  }^{ \lceil n \aat \rceil +\ell }   h^d(\shL^n(m))$
is bounded by a multiple of $n^d$.
Therefore we get (using that summation is integration up to an error of lower degree)
\begin{eqnarray*}
& & \sum_{m \in {\mathbb N}} h^d( \shL^n  (m) ) \\
&=&  \sum_{m =0}^{  \lceil n \aat \rceil -1  } h^d( \shL^n   (m)   )
+ \sum_{m = \lceil n\aat \rceil   }^{  \lceil n\aat \rceil +\ell   } h^d( \shL^n   (m)   ) 
+ \sum_{m = \lceil n\aat \rceil +\ell +1 }^{   \infty  } h^d( \shL^n   (m)   )   \\
&=&  \sum_{m =0}^{  \lceil n\aat \rceil -1  } \frac{1}{d!} 
\left( \sum_{i=0}^d   \binom{d}{i}  m^i  n^{d-i} H^i .  L^{d-i}   \right) + O(n^d)  \\   
&=&  \frac{1}{d!}  \sum_{i=0}^d  \binom{d}{i}  n^{d-i}
\left(       \sum_{m =0}^{  \lceil n\aat \rceil -1  }    m^i   H^i . L^{d-i}  \right)
+ O(n^d) \\
&=&   \frac{1}{d!}  \sum_{i=0}^d  \binom{d}{i}  n^{d-i}
\left(     \frac{1}{i+1}    (n \aat)^{i+1}   H^i .  L^{d-i}  \right)
+ O(n^d)                      \\
&=&  \frac{\aat}{d!}   n^{d+1}         \left(  \sum_{i=0}^d  \frac{1}{i+1}  \binom{d}{i}  \aat^i   H^i .  L^{d-i}     \right)
+ O(n^d) \, .  \\   
\end{eqnarray*}
\end{proof}

\begin{cor}
\label{topasymptotic}
In the situation of Lemma \ref{antiamplethreshold} we get the asymptotic behavior
\[\lim_{n \rightarrow \infty}    \frac{ \sum_{m \in {\mathbb N}} h^d( \shL^n   (m)   )  }{n^{d+1} } = 
  \frac{\aat}{d!}      \left(  \sum_{i=0}^d  \frac{1}{i+1}  \binom{d}{i}  \aat^i    H^i  .   L^{d-i}    \right) \, .\]
\end{cor}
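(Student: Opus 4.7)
The plan is to read the corollary as an immediate rephrasing of the asymptotic expansion already established in Lemma \ref{antiamplethreshold}. Concretely, I would take the identity
\[ \sum_{m \in {\mathbb N} } h^d( \shL^n (m) ) = n^{d+1} \frac{\aat}{d!} \sum_{i=0}^d \frac{1}{i+1} \binom{d}{i} \aat^i H^i . L^{d-i} + O(n^d) \]
from the lemma, divide both sides by $n^{d+1}$, and pass to the limit $n \to \infty$. The main term on the right is independent of $n$ and coincides with the claimed value, while the correction $O(n^d)/n^{d+1}$ is of order $1/n$ and vanishes in the limit. This establishes both the existence of the limit and the stated formula.

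There is no genuine obstacle: the work of the corollary is entirely carried by the preceding lemma, where the hypotheses on Kodaira vanishing and on the coincidence (up to closure) of the pseudoeffective cone and the ample cone were used to discard $h^d(\shL^n(m))$ outside the regime $0 \le m < n\aat$, and Riemann-Roch was used to compute the leading polynomial in this regime. Once the polynomial-in-$(n,m)$ expression for $h^d(\shL^n(m))$ is summed against $m$ up to $\lceil n\aat\rceil$, the top-degree coefficient in $n$ is exactly $\aat/d!$ times the bracketed sum, and everything else is absorbed into the $O(n^d)$ error.
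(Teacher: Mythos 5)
Your proof is correct and matches the paper's, which likewise just says the corollary follows directly from Lemma \ref{antiamplethreshold}. Dividing the established expansion by $n^{d+1}$ and letting the $O(n^d)$ term vanish is exactly the intended argument.
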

\begin{proof}
This follows directly from Lemma \ref{antiamplethreshold}.
\end{proof}

For $d=2$ this limit is
\[    \frac{\aat}{2} \left( \frac{1}{3} \aat^2 H^2   +  \aat H.L   +   L^2    \right)    \, .\]
In the following we restrict to the case of a smooth projective surface $S$.
In this case $V= \nesenum (S) \otimes_\Z {\mathbb R}$
carries an integral intersection  bilinear form and in particular an integral quadratic form
(integral means that the values on the underlying integral lattice are integers).
The positive doublecone is given by the locus where the quadratic form is
non-negative. It is symmetric to the origin and consists of two convex cones,
one being determined by containing the ample class, which we just call the positive cone.

\begin{lemma}
\label{ampleeffectivepositive}
Let $S$ be a smooth projective surface over an algebraically closed field $K$.
Suppose that the closure of the ample cone equals the closure of the effective cone in
$\nesenum (S) \otimes_\Z {\mathbb R}$.
Then this cone equals also the positive cone. 
\end{lemma}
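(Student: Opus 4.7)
Write $A$, $E$, and $P$ for the ample cone, the (pseudo)effective cone, and the positive cone inside $V = \nesenum(S) \otimes_\Z \R$. The strategy is to establish the two inclusions $\overline{A} \subseteq P$ and $P \subseteq \overline{E}$; combined with the hypothesis $\overline{A} = \overline{E}$, they force $\overline{A} = \overline{E} = P$.

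The inclusion $\overline{A} \subseteq P$ is the easy direction: any ample divisor $D$ has $D^2 > 0$ (a sufficient multiple of $D$ is very ample, and $D^2$ computes a positive scalar multiple of the degree of the resulting projective embedding of $S$) and $D . H > 0$, so $D$ lies in the open positive cone; passing to the closure gives the inclusion.

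The core of the argument is the reverse inclusion $P \subseteq \overline{E}$. I would show that every class in the interior of $P$ --- that is, every $D$ with $D^2 > 0$ and $D . H > 0$ --- already lies in $E$, and then invoke the fact that $P$ is the closure of this interior (which is non-empty since $H$ itself lies there). After approximating a real class by a rational one with the same strict inequalities and clearing denominators, the task reduces to showing that for an integral divisor $D$ satisfying $D^2 > 0$ and $D . H > 0$, some positive multiple $nD$ is effective. This is the classical Riemann--Roch argument on a smooth projective surface:
\begin{equation*}
\chi(\O_S(nD)) \;=\; \chi(\O_S) + \frac{n^2 D^2 - n\, D . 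K_S}{2} \;\longrightarrow\; +\infty,
\end{equation*}
while Serre duality identifies $h^2(\O_S(nD)) = h^0(\O_S(K_S - nD))$. Since $(K_S - nD) . H = K_S . H - n\, D . H \to -\infty$ and every effective class pairs non-negatively with the ample class $H$, this $h^0$ vanishes for $n$ sufficiently large. Therefore $h^0(\O_S(nD)) \ge \chi(\O_S(nD)) \to \infty$, and $nD$ is effective for some $n$, as required.

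The main obstacle is precisely this Riemann--Roch / Serre duality step; the cone-theoretic bookkeeping on either side is formal once it is in hand. Note that Riemann--Roch and Serre duality on smooth projective surfaces hold in arbitrary characteristic, so the argument goes through under exactly the hypotheses stated in the lemma, without any restriction on $\mathrm{char}(K)$.
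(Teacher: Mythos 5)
Your proof is correct and takes essentially the same route as the paper: the key step in both is that a divisor $D$ with $D^2>0$ and $D.H>0$ has some effective multiple, which the paper cites as Hartshorne's Corollary V.1.8 while you unwind the underlying Riemann--Roch and Serre duality argument. The only cosmetic difference is that the paper uses the Hodge index theorem to rule out $H.D=0$ when starting from the positive cone, whereas you sidestep this by approximating by interior points where $D.H>0$ holds strictly.
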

\begin{proof}
The ample cone is always inside the positive cone.
So suppose that $D$ is a divisor with positive self intersection number $D^2 > 0$ and that $D$ lives in the ample half. 
The Hodge index theorem \cite[Theorem V.1.9]{hartshorne} excludes $H.D=0$. Then $H.D >0$ ($H.D <0$ gives a contradiction) and
so by \cite[Corollary V.1.8]{hartshorne}
we have that $nD$ is effective for some $n \in \N$. It follows that $D$ is in the closure of the ample cone by assumption. 
\end{proof}

\begin{example}
We consider a product $\surface = C \times {\mathbb P}^1_K $
where $C$ is a smooth projective curve over an algebraically closed field $K$.
The numerical N\'{e}ron-Severi group is $\Z \times \Z$, the standard vectors being represented by the fibers $E$ and $F$
of the two projections.
A divisor $rE+sF$ is effective if and only if $r,s \geq 0$ and ample if and only if $r,s > 0$.
Hence the effective cone equals the ample cone up to closure.
For an ample divisor $H = rE + sF$ and a divisor $D= m E + n F$, the antiample threshold of $D$ (with respect to $H$) is
$\aat(D) = \operatorname{min} (  \frac{-m}{r} , \frac{-n}{s})  $. In particular it is a rational number.
The same behavior holds for ruled surfaces if the defining bundle of rank two
is strongly semistable, see \cite[Section 1.5.A]{lazarsfeldpositivity1}. It follows from Corollary \ref{topasymptotic} that for any line bundle $\shL$
on such a ruled surface the limit
$\frac{ \sum_{m=0}^\infty \dim_K H^2 (\surface, \shL^q (m)) }{q^3}$ is a rational number.
\end{example}

We are now in a state to use irrational boundaries of the ample cone to produce asymptotic behavior of cohomology of line bundles
with irrational limits. This method was pioneered by D. Cutkosky
(see \cite{cutkoskyzariski}, \cite{cutkoskysrinivas}, \cite{cutkoskyirrational}
and \cite[Section 2.3.B]{lazarsfeldpositivity1}).
Note that if $\aat$ is an irrational square root of a rational number,
then the expression $ \frac{\aat}{2} \left( \frac{1}{3} \aat^2 H^2   +  \aat H.L   +   L^2    \right) $
is irrational as well under the conditions of the following lemma.
To see this we only have to exclude that
$\frac{1}{3} \aat^2 H^2 + L^2       = 0$.
Since $(\aat H +L)^2 =\aat^2 H^2     +2 \aat L.H +   L^2       =0$,
we would have $2  \aat H.( \frac{1}{3} \aat H + L) =0  $.
But $ \frac{1}{3} \aat H +L$ is antiample and can not have $0$ intersection
with $H$.

\begin{cor}
\label{sheafcohomologyirrational}
Let $\surface$ be a smooth projective surface over an algebraically closed field $K$ of positive characteristic $p$
with a fixed very ample invertible sheaf $\O_\surface(1)$ with corresponding divisor $H$.
Suppose that the closure of the ample cone equals the closure of the effective cone,
that this cone restricted to an integral plane containing $H$ has irrational boundaries
and that Kodaira vanishing holds on $\surface$.
Then  there exists an antiample line bundle $\shL$ whose antiample threshold $\aat$ is an irrational square root of a rational number
and we have the irrational limit ($q=p^e$)
\[ \lim_{e \rightarrow \infty} \frac{ \sum_{m \in {\mathbb N}} h^2( \shL^{q}    (m)   )  }{ q^{3} } 
=  \frac{\aat}{2} \left( \frac{1}{3}  \aat^2 H^2   +  \aat L.H     +       L^2              \right)   \, .   \]
Moreover, there exists a line bundle $\shM$ (one can take $\shM=\shL$) such that
\[ \lim_{e \rightarrow \infty} \frac{ \sum_{m \in {\mathbb N}} h^1( \shM^{q}    (m)   )  }{ q^{3} }     \]
and
\[ \lim_{e \rightarrow \infty} \frac{ \sum_{m \in {\mathbb Z}} h^1( \shM^{q}    (m)   )  }{ q^{3} }     \]
are irrational.
\end{cor}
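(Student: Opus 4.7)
The plan is to first produce a suitable antiample line bundle $\shL$ whose antiample threshold $\sigma = \sigma(L)$ is a quadratic irrational, then to apply Corollary~\ref{topasymptotic} and verify irrationality of the resulting $h^2$-limit via Galois conjugation, and finally to deduce the $h^1$-statements using Euler characteristic and the vanishing package.

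Let $E \in \nesenum(\surface)$ be an integer class such that $\Lambda := \Z H + \Z E$ is the given integral plane, and set $\Delta_0 := (H.E)^2 - H^2 E^2$; the irrational-boundary hypothesis is precisely that $\Delta_0$ is not a square in $\Q$. First I would pick any $L = aH + bE \in \Lambda$ with $b \neq 0$ lying in the open antiample sector (which exists since the sector is a nonempty open convex cone in $\Lambda_\R$ and rational directions are dense). A short computation yields $(L.H)^2 - H^2 L^2 = b^2 \Delta_0$, so the quadratic $\sigma^2 H^2 + 2\sigma H.L + L^2 = 0$ defining the antiample threshold has non-square rational discriminant; hence $\sigma$ is irrational and lies in $\Q(\sqrt{\Delta_0})$.

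Next, Corollary~\ref{topasymptotic} (applicable since Kodaira vanishing holds on $\surface$ and the ample-equals-effective hypothesis is in force) with $d = 2$ and $n = q = p^e$ gives
\[
\lim_{e \to \infty} \frac{\sum_{m \in \N} h^2(\shL^{q}(m))}{q^3} = \frac{\sigma}{2}\left(\tfrac{1}{3}\sigma^2 H^2 + \sigma H.L + L^2\right).
\]
Let $\bar\sigma$ denote the Galois conjugate of $\sigma$ in $\Q(\sqrt{\Delta_0})$. Using Vieta's formulas $\sigma + \bar\sigma = -2H.L/H^2$ and $\sigma\bar\sigma = L^2/H^2$, I would verify by direct computation (essentially the one indicated in the remark preceding the statement) that the value of the above expression at $\sigma$ differs from its value at $\bar\sigma$ by a nonzero rational multiple of $(\sigma - \bar\sigma)\cdot\Delta$, where $\Delta = (H.L)^2 - H^2 L^2 \ne 0$. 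Hence the limit is not Galois-fixed and is therefore irrational.

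For the $h^1$-statements I take $\shM = \shL$ and partition the integers $m$ by the position of $\shL^{q}(m) = qL + mH$ relative to the ample cone. Outside the interval $[q\sigma, q\sigma_+]$ (where $\sigma_+$ is the larger root) the divisor $\shL^{q}(m)$ is either ample or antiample, and Kodaira vanishing (applied to $\shL^{q}(m) - K_\surface$ in the ample case, and through Serre duality to $\shL^{-q}(-m)\otimes\omega_\surface$ in the antiample case) gives $h^1(\shL^{q}(m)) = 0$; for $m \in (q\sigma, q\sigma_+)$ both $\shL^{q}(m)$ and $K_\surface - \shL^{q}(m)$ have self-intersection of order $-q^2$, so by the ample-equals-effective hypothesis neither is pseudoeffective, yielding $h^0 = h^2 = 0$ and $h^1 = -\chi(\shL^{q}(m))$ from Euler characteristic. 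In particular $h^1 = 0$ for $m < 0$, so the sums over $\N$ and $\Z$ coincide. Applying Riemann-Roch, $\chi(\shL^{q}(m)) = \tfrac{1}{2}(qL + mH)^2 + O(q)$, and replacing the sum by a Riemann integral gives
\[
\lim_{e \to \infty} \frac{\sum_{m \in \Z} h^1(\shL^{q}(m))}{q^3} = -\frac{1}{2} \int_{\sigma}^{\sigma_+} (L + uH)^2 \, du.
\]
Evaluating this integral using the Vieta relations produces a nonzero rational multiple of $\Delta^{3/2} = |b|^3 \Delta_0 \sqrt{\Delta_0}$, which is irrational since $\Delta_0$ is a non-square. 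The main technical point will be to control the $O(1)$ transition indices near $m = q\sigma$ and $m = q\sigma_+$ (where $\shL^{q}(m)$ or $K_\surface - \shL^{q}(m)$ crosses the boundary of the positive cone) and to show they contribute only $O(q^2)$ to each sum, hence nothing in the limit.
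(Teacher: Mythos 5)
Your proof is correct and follows essentially the same route as the paper: reduce to Corollary~\ref{topasymptotic} for the $h^2$-limit, and compute $\sum_m h^1(\shM^q(m))$ via Riemann--Roch over the range of $m$ where $\shM^q(m)$ lies outside both the ample and antiample cones, with Kodaira vanishing and (via Lemma~\ref{ampleeffectivepositive}) the ample-equals-positive-cone hypothesis killing $h^0, h^1, h^2$ appropriately. The only cosmetic differences are that you verify irrationality of the $h^2$-limit by Galois conjugation and Vieta's formulas rather than the paper's direct computation preceding the statement, and that you work with $\shM = \shL$ throughout, whereas the paper first normalizes to a divisor $D$ with $H.D=0$ (making the two thresholds $\pm u$ symmetric) and only notes at the end that $\shL$ also works.
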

\begin{proof}
Under the given assumptions,
the ample cone in $\nesenum (\surface)_\R $ is by Lemma \ref{ampleeffectivepositive} given by one half (the half containing $H$)
of the positive doublecone of an integral quadratic form
$n_1x_1^2-n_2x^2_2 - \cdots - n_s x^2_s$
where $n_i \in \N$ (the type of this form is determined by the Hodge index theorem, see \cite[Theorem V.1.9]{hartshorne})
and where the first component corresponds to $H$.
The condition means that there exists a plane spanned by $H$ and an integral divisor
$D$ (or rather its class in $\nesenum (\surface)$) such that $uH + D $
meets the boundary of the ample cone in an irrational number $u$.
Replacing $D$ by a suitable integral combination
$rH+sD$ we may assume $H.D=0$
(note however that $H$ and $rH+sD$ do not generate the same sublattice as $H$ and $D$, only the same real plane).
Then $(uH+D).(uH+D) = u^2 H^2 +  D^2        = 0$ 
shows that $u^2 $ is rational.
The intersection form is now symmetric to the $H$-axis and the $D$-axis.
Now choose $L= c H + D$ to be antiample and denote the corresponding invertible sheaf by $\shL$.
Then the result follows from Corollary \ref{topasymptotic}.

For the second statement we take
the invertible sheaf $\shM$ corresponding to $D$ to get a clearer view on the two distinct boundary phenomena.
Since $H.D=0$ we must have $D.D < 0$
by the Hodge index theorem.
The same applies to $-D$. Hence $D$ lies outside the ample and the antiample cone,
and the ample threshold is $u$ and the antiample threshold is $-u$ by the symmetry of the situation. 

Let $\ell \geq 0$ be such that $\O_\surface(-\ell) \otimes \omega_\surface$ is antiample. Then for $m-\ell > n u$
we have
$H^1(\surface,\shM^n(m)) = H^1(\surface, \shM^n(m-\ell) \otimes \O_\surface(\ell)
\otimes \omega_\surface^{-1} \otimes \omega_\surface)$,
and this is $0$ since
$\shM^n(m - \ell) \otimes \O_\surface(\ell)$ and $ \O_\surface(\ell) \otimes \omega_\surface^{-1} $
are ample and by Kodaira vanishing.
For $ \frac{m}{n} <u$, the sheaf $ \shM^n (m) $ has no non-zero sections.
Also, $H^2(\surface, \shM^n (m)) = 0$ for $m \geq 0$,
since $ \shM^{-n} (-m) \otimes \omega_\surface   $ also has (for $n$ large enough)
negative self intersection and is therefore not effective. Hence we get
\newpage
\begin{eqnarray*}
& & \sum_{m \in {\mathbb N}} h^1( \shM^{n}    (m)   )  \\
& =&  \sum_{m =0}^{ \lceil u n \rceil   -1 } h^1( \shM^{n}    (m)   ) 
+ \sum_{m= \lceil u n \rceil}^{  \lceil u n \rceil + \ell        }    h^1(\shM^n(m)) 
 + \sum_{m= \lceil u n \rceil+\ell+1}^\infty h^1(\shM^n(m)) 
\\
&=& - \sum_{m =0}^{ \lceil u n \rceil -1 } \frac{(mH +nD)^2}{2} +  O(n^2)  \\
&=&  - \frac{1}{2}  \sum_{m =0}^{  \lceil u n \rceil   -1 }  ( m^2 H^2 +n^2 D^2  )  +   O(n^2)   \\
&=&  - \frac{1}{2} \left( \frac{1}{3} u^3 n^3 H^2+un^3 D^2   \right)     +   O(n^2)    \\
&=& - \frac{ u}{2} n^3 \left( \frac{u^2 H^2}{3} +  D^2 \right)  +  O(n^2) \, .
\end{eqnarray*}
Dividing by $n^3$ the additional term on the right converges to $0$. Note that since $\frac{u^2 H^2}{3} +D^2 \neq 0$,
the limit is an irrational number.
The same limit behavior
holds for $q=p^e$ and $e \rightarrow \infty$ instead of $n$.

Similar arguments show that for $\! m  \leq 0 \!$ we only have to consider \linebreak
$h^1(\shM^n(m))$ in the range $ -un \leq m \leq 0$, and that in this range  $h^0(\shM^n(m))$ and $h^2(\shM^n(m))$ are $0$.
The summation gives in the limit again
a positive rational multiple of $u$. For $L=c H+D$ antiample we have
\[ \sum_{m \in \Z}  h^1(\shL^n(m) = \sum_{m \in \Z}  h^1(\shM^n(m) =   \sum_{m \in \N}  h^1(\shL^n(m) \, , \]
so we can also take $\shL$ instead of $\shM$.
\end{proof}

\begin{rmk}
\label{sublattice}
The proof of Corollary \ref{sheafcohomologyirrational} shows that in order to get the stated limit behavior
it is enough to have a sublattice of rank two inside
$\nesenum ( \surface)$ which contains $H$ and
such that the induced ample cone and the induced effective cone are equal up to closure
and such that their boundary is irrational.
However, in general it depends on the full $\nesenum (\surface)$ whether a divisor is ample or not. 
\end{rmk}

\section{Hilbert-Kunz multiplicity for splitting top-dimensional syzygy bundle}
\label{splitting}

We turn now our attention to situations where the top-dimensional syzygy bundle splits into line bundles.

\begin{thm}
\label{hilbertkunzsplitting}
Let $R$ be a standard-graded Cohen Macaulay domain with an isolated singularity of dimension $d +1 \geq 2$
over an algebraically closed field of positive characteristic $p$ and let
$M$ be a graded $R$-module of finite length. Let
\[ \cdots   \longrightarrow F_2       \longrightarrow F_1 
\longrightarrow  F_0  \longrightarrow M \longrightarrow 0 \]
be a graded complex which is exact on $D(R_+)$, where $F_i =\bigoplus_{j \in J_i}   R(- \beta_{i j})$ are graded free $R$-modules.
Set $Y= \operatorname{Proj} R$ and assume that in $\nesenum (Y)_\R $ the closure of the ample cone and the closure of the (pseudo)effective cone are identical and that Kodaira vanishing
holds.
Suppose that $\operatorname{Syz}_{d} \cong \shL_1 \oplus \cdots \oplus \shL_r$
is a direct sum of line bundles with antiample thresholds $\aat_i=\aat(L_i)$.
Then
\begin{eqnarray*}
& & \operatorname{HK}(M)   \\
& \!\!\! \! \! = \!\!\!\! \!\! &\sum_{j=1}^r \! \left(\! \frac{\aat_j}{d!} 
\sum_{i=0}^d \frac{1}{i+1}  \binom{d}{i}  \aat_j^i   H^i . L_j^{d-i} \!  \right)\!
+ \! \frac{H^d}{(d+1)!}   \!\!\!    \left(    \sum_{i=0}^{d} (-1)^{ d+1-i } \!\!\! \left(    \sum_{j \in J_i } 
   \beta^{d+1}_{ij}  \right) \!\!\! \right) \!\!   . 
\end{eqnarray*}
\end{thm}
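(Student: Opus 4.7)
The plan is a direct combination of Theorem \ref{hilbertkunzgeometric} with Corollary \ref{topasymptotic}. First, I would apply Theorem \ref{hilbertkunzgeometric} to the given punctured resolution. Its conclusion splits $\operatorname{HK}(M)$ into two summands, and the second one,
\[
\frac{H^d}{(d+1)!}\sum_{i=0}^{d}(-1)^{d+1-i}\left(\sum_{j\in J_i}\beta_{ij}^{d+1}\right),
\]
already matches the second summand of the asserted formula verbatim; nothing more needs to be done for that piece.

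It remains to evaluate
\[
\lim_{e\to\infty}\frac{\sum_{m\in\N}h^d\bigl((F^{e*}\Syz_d)(m)\bigr)}{p^{e(d+1)}}.
\]
Here I would exploit the splitting hypothesis. Since Frobenius pull-back on a line bundle is just the $p^e$-th tensor power, $F^{e*}\shL_j = \shL_j^{p^e}$, and from $\Syz_d \cong \shL_1\oplus\cdots\oplus\shL_r$ one gets $F^{e*}\Syz_d \cong \bigoplus_j \shL_j^{p^e}$, hence
\[
\sum_{m\in\N}h^d\bigl((F^{e*}\Syz_d)(m)\bigr) \;=\; \sum_{j=1}^r\sum_{m\in\N}h^d\bigl(\shL_j^{p^e}(m)\bigr).
\]
Each inner sum is exactly the expression appearing in Lemma \ref{antiamplethreshold} with $n = p^e$. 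The two global hypotheses needed there (Kodaira vanishing on $Y$, and closure of the pseudoeffective cone equal to closure of the ample cone) are assumed in the statement of the theorem, so I would apply Corollary \ref{topasymptotic} to each $\shL_j$ individually, yielding a limit of $\frac{\aat_j}{d!}\sum_{i=0}^d\frac{1}{i+1}\binom{d}{i}\aat_j^i H^i.L_j^{d-i}$. Summing finitely many limits over $j$ and adding the shift contribution from Theorem \ref{hilbertkunzgeometric} produces the claimed formula.

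The only point requiring care, and the likely main obstacle, is that Lemma \ref{antiamplethreshold} is formally stated for an antiample $L$, whereas the theorem labels each summand merely by its antiample threshold $\aat_j = \aat(L_j)$, which a priori could be non-positive if some $\shL_j$ sits in or near the ample cone. In that degenerate case the interval of $m\ge 0$ with $\shL_j^{p^e}(m)$ antiample is empty, the intermediate range is $O(p^{ed})$ and the range above contributes $0$ by the ample-equals-effective hypothesis, so the limit is $0$; the polynomial in $\aat_j$ that appears on the right-hand side must then be checked to vanish under the same Riemann-Roch bookkeeping as in the proof of Lemma \ref{antiamplethreshold}. A slight generalisation of that lemma, allowing $\aat(L)\le 0$ and replacing the upper limit of summation by $\max(0,\lceil n\aat\rceil-1)$, handles both signs uniformly. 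Once this uniform version is in place, the theorem follows by pure assembly with no further analytic input.
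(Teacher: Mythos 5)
Your proposal reproduces the paper's proof exactly: the paper's proof of Theorem \ref{hilbertkunzsplitting} is the one-line statement ``This follows from Theorem \ref{hilbertkunzgeometric} and Corollary \ref{topasymptotic}'', and your expansion (applying Theorem \ref{hilbertkunzgeometric} to get the two-term decomposition, using $F^{e*}\shL_j = \shL_j^{p^e}$ to split the top-cohomology term across the $\shL_j$, and applying Corollary \ref{topasymptotic} to each summand) is precisely that argument spelled out. The one point of care you raise --- that Corollary \ref{topasymptotic} formally requires each $\shL_j$ antiample, which the theorem does not state --- is a legitimate omission in the paper, but you can resolve it without generalising the lemma: each $\shL_j$ is a direct summand of $\Syz_d \subseteq F_d = \bigoplus_{j'}\O_Y(-\beta_{dj'})$, so some projection gives a nonzero section of $\shL_j^{-1}(-\beta_{dj'})$, i.e. $-L_j - \beta_{dj'}H$ is effective; under the ample-equals-(pseudo)effective hypothesis this class is nef, and since $\beta_{dj'}\geq 1$ (as the complex is graded and $M$ is of finite length, so the shifts grow with homological degree) one gets $-L_j = (-L_j - \beta_{dj'}H) + \beta_{dj'}H$ ample. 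Thus every $\shL_j$ is forced to be antiample, $\aat_j > 0$, and Corollary \ref{topasymptotic} applies as stated; your proposed uniform extension of Lemma \ref{antiamplethreshold} to $\aat\leq 0$ is unnecessary here (and in fact the formula in the theorem would be \emph{false} for $\aat_j\leq 0$, since the polynomial in $\aat_j$ does not vanish there while the left-hand side does, which is another reason it is better to verify antiampleness rather than to generalise the formula).
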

\begin{proof}
This follows from Theorem \ref{hilbertkunzgeometric} and Corollary \ref{topasymptotic}.
\end{proof}

\begin{rmk}
It is not easy to give examples where the splitting behavior supposed in Theorem \ref{hilbertkunzsplitting}
actually holds for a top-dimensional syzygy bundle. It holds when $M$ has finite projective dimension.
See Example \ref{quadric} below
for a splitting case where the projective dimension is not finite. This behavior seems to happen often in the toric case.
Splitting criteria in various situations were given in \cite{bakhtarysplitting}, \cite{halicsplit}, \cite{horrockspunctured},
\cite{mohankumarraoravindra}, \cite{sawadasplitting}. However,
these results mean often splitting into line bundles of the form $\O_Y(\ell)$.
\end{rmk}

\begin{rmk}
If the top-dimensional syzygy bundle splits,
but the effective cone does not coincide with the ample cone up to closure, then two boundaries
are important, the number $\aat$ where $\aat H +D$ leaves the antiample cone and the number
$c$ where $c H +D$ leaves the antieffective cone (so $c$ is the \emph{antieffective threshold}).

Recall that the effective cone equals the big cone up to closure (see \cite[Theorem 2.2.26]{lazarsfeldpositivity1}.
For $m,n$ such that $ \aat \leq   \frac{m}{n} \leq c$ the invertible sheaves $\shL^n(m)$
are not antiample, but antibig.
Hence $h^d (\shL^n(m)) $ can neither be computed numerically by Riemann-Roch,
since there will be intermediate cohomology, nor is it $0$. The sum of this cohomology over this range,
divided by $n^{d+1} $, will in fact contribute substantially, since by antibigness $h^d (\shL^n(m))$ is large of order $v n^d$,
where $v$ stands for the volume of $\shL^{-n}(-m)$ (for a fixed ratio $m/n$).
The exact value can probably be obtained by integrating the continuous volume function
(\cite[Corollary 2.2.45]{lazarsfeldpositivity1}, see also  Cutkosky's positive intersection product
\cite[Theorem 5.2, Theorem 5.3]{cutkoskyteissierproblem}) in this range.

We also conjecture that there exist examples of this type where the Hilbert-Kunz multiplicity is a transcendental number,
and where also the second coefficient in the Hilbert-Kunz function
(see \cite{hunekemcdermottmonsky}) is irrational
(according to the possible irrationality of the volume of big divisors, see \cite[Section 2.3.B]{lazarsfeldpositivity1}).
We will work this out in the near future.
P.Monsky has shown in \cite{monskytranscendence} that if his conjecture
on a specific plane curve of degree three in characteristic two
holds, then he gets (not only the existence of irrational Hilbert-Kunz multiplicities as mentioned in the introduction but also)
that various explicit transcendental special values of certain hypergeometric functions are  
$\Q$-linear combinations of Hilbert-Kunz multiplicities of characteristic $2$ local rings.
\end{rmk}

\begin{rmk}
By the filtration principle for vector bundles (usually called splitting principle,
see \cite[Appendix A.3]{hartshorne} or \cite[Section 3.2]{fultonintersection}),
there exists always a birational morphism
$\psi:Y' \rightarrow Y$ with $Y'$ smooth and projective
such that $ \psi^{*} (E)$ has a filtration $E_1 \subset E_2 \subset \ldots \subset E_{r-1} \subset
\psi^{*} (E) $ with invertible quotients  $E_{i+1}/E_{i}$. These can be constructed inductively by looking at
the projective bundle ${\mathbb P}(E)$
and suitable subvarieties and their resolution of singularities.
Moreover, K. E. Sumihiro has shown in \cite[Theorem 2.1]{sumihiro}
that there exists a finite flat morphism $Y' \rightarrow Y$ with $Y'$
smooth and projective and such that the pull-back of $E$ has a filtration with invertible quotients.
\end{rmk}

\begin{example}
\label{quadric}
We consider a smooth quadric in four variables over an algebraically closed field of
characteristic $p$. They are all isomorphic and 
might be given by $XY-ZW=0$ or by $X^2+Y^2+Z^2+W^2=0$ (for $p \neq 2$). The corresponding projective surface $S$ is isomorphic to
${\mathbb P}^1 \times {\mathbb P}^1$. Let $p_1$, $p_2$ denote the projections and set
$ \shM = p_1^*( \O_{\mathbb P^1} (1))$ and
$ \shN = p_2^*( \O_{\mathbb P^1} (1))$. On the surface we have
\[\O_S(1) = \shM \otimes \shN \text{ and } \Omega_S =  \shM^{-2} \oplus \shN^{-2} \, . \]
On the projective line ${\mathbb P}^1= \Proj K[s,t]$ we have the short exact sequence
\[ 0 \longrightarrow  \O_{\mathbb P^1}(-2) \longrightarrow   \O_{\mathbb P^1}^2     \stackrel{s^2,t^2}{\longrightarrow}  \O_{\mathbb P^1}(2)  \longrightarrow 0 \, .\]
This pulls back to
\[ 0 \longrightarrow  \shM^{-2} \longrightarrow   \O_S^2     \longrightarrow  \shM^2  \longrightarrow 0 \, \]
and so we get
\[ 0 \longrightarrow  \shM^{-2} \oplus \shN^{-2} \longrightarrow   \O_S^4 
\longrightarrow  \shM^2 \oplus \shN^2 \longrightarrow 0 \, . \]
Tensoring with $\O_S(-2)= \shM^{-2} \otimes \shN^{-2}$ yields
\[ 0 \longrightarrow  \left( \shM^{-4} \otimes \shN^{-2} \right) \oplus \left( \shM^{-2} \otimes  \shN^{-4} \right)
\longrightarrow   \O_S (-2)^4
\longrightarrow  \shM^{-2} \oplus \shN^{-2} \longrightarrow 0 . \]
Here $\shE \cong  \left( \shM^{-4} \otimes \shN^{-2} \right) \oplus \left( \shM^{-2} \otimes  \shN^{-4} \right) $
in the notation of Lemma \ref{hypersurfaceresolution}.
The antiample threshold on these two line bundles is $2$. The self intersection of $ \shM^{-4} \otimes \shN^{-2}$ is $16$ and
its intersection with $H$
is $-6$ (the same holds for $ \shM^{-2} \otimes \shN^{-4}$). Therefore the Hilbert-Kunz multiplicity is
 \[ 2    \left(  \frac{1}{3} 8   + 2   (-6)   + 16 \right)  +   \frac{2}{6} ( -5 \cdot 8 +  4    )
 =  \frac{4}{3} \, . \]
This coincides with the known result, see \cite[Theorem 3.1]{watanabeyoshida3}.
\end{example}

The following result is known at least for $M=R/\m$ by work of K. Watanabe \cite{watanabetoric} on normal monoid rings and follows also
from \cite{seibertfinitetype} in connection with \cite{smithbergh}.
It is probably also possible to prove it by other methods like  \cite{conca} or \cite{millerswanson}.

\begin{cor}
\label{quadricrational}
Let $R=K[X,Y,Z,W]/(F)$ be given by a quadratic equation
over an algebraically closed field $K$
such that $\surface =\Proj R$ is smooth.
Let $M$ be a graded $R$-module of finite length. Then the Hilbert-Kunz multiplicity of $M$ is a rational number.
\end{cor}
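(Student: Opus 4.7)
The plan is to apply Theorem \ref{hilbertkunzsplitting} (or a direct extension of it to higher syzygies) to a resolution of $M$ whose top syzygy splits as a direct sum of line bundles on $\surface = \Proj R$. Over algebraically closed $K$, the smooth quadric in $\mathbb P^3$ is $\surface \cong \mathbb P^1_K \times \mathbb P^1_K$; write $\shM, \shN$ for the pull-backs of $\O_{\mathbb P^1}(1)$ along the two projections. Then $\Pic(\surface) = \mathbb Z \shM \oplus \mathbb Z \shN$ with $\shM^2 = \shN^2 = 0$, $\shM \cdot \shN = 1$, and the polarization is $H = \shM + \shN$. The ample cone is the open positive quadrant, which coincides with the effective cone up to closure, so the antiample threshold of any divisor $\alpha \shM + \beta \shN$ is the rational number $\min(-\alpha, -\beta)$; Kodaira vanishing holds on this rational surface, and all intersection numbers on $\surface$ are integers.

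To produce a splitting syzygy, use that $R$ is a hypersurface of dimension $3$: by Eisenbud's matrix factorization theorem the minimal graded free resolution of $M$ is eventually two-periodic, so $\Syz_n(M)$ is a maximal Cohen-Macaulay module for every $n \geq 3$. By the classification of MCM modules over the smooth quadric (Buchweitz-Eisenbud-Herzog, equivalently Kn\"orrer periodicity reducing to the zero-dimensional case), every indecomposable non-free MCM $R$-module is, up to twist, one of the two spinor modules, whose sheafifications on $\surface$ are $\shM$ and $\shN$. Equivalently, the indecomposable aCM line bundles on $\surface$ are precisely $\O_\surface, \shM, \shN$ up to twist, i.e.\ the line bundles $\shM^\alpha \otimes \shN^\beta$ with $|\alpha - \beta| \leq 1$. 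Hence $\widetilde{\Syz_n(M)}$ splits on $\surface$ as $\bigoplus_j \shM^{\alpha_j} \otimes \shN^{\beta_j}$ with $|\alpha_j - \beta_j| \leq 1$ for every $n \geq 3$.

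Now apply Theorem \ref{hilbertkunzgeometric} to the minimal resolution and bridge from the default top syzygy $\Syz_2$ to the splitting $\Syz_n$ by iterating the short exact sequences $0 \to \Syz_{i+1} \to F_{i+1} \to \Syz_i \to 0$ for $i = 2, \ldots, n-1$, pulled back by Frobenius and twisted by $\O_\surface(m)$. The crucial simplification is that every $F^{e*} F_{i+1}(m)$ is a direct sum of diagonal line bundles $\O_\surface(k) = \shM^k \otimes \shN^k$, and by K\"unneth on $\mathbb P^1 \times \mathbb P^1$ one has $H^1(\O_\surface(k)) = 0$ for every $k \in \mathbb Z$. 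The iteration then expresses $\sum_m h^2((F^{e*}\Syz_2)(m))$ as an alternating sum of $\sum_m h^2((F^{e*} F_i)(m))$ for $2 < i \leq n$, plus $\pm \sum_m h^2((F^{e*}\Syz_n)(m))$, modulo intermediate correction terms $\sum_m h^1((F^{e*}\Syz_i)(m))$ that can be re-expressed, via the same long exact sequences and Serre duality on $\surface$, purely in terms of cohomology of explicit line bundle summands of the split bundle $\widetilde{\Syz_n}$. By Corollary \ref{topasymptotic}, each resulting asymptotic contribution is rational --- the antiample thresholds and all intersection data with $H$ are rational/integer --- and adding the explicitly rational contribution of $F_0, F_1, F_2$ from Theorem \ref{hilbertkunzgeometric} gives $\operatorname{HK}(M) \in \mathbb Q$. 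The main obstacle is carrying out this bridging step cleanly: one has to verify that the intermediate $h^1$-corrections accumulated during the iteration contribute only rational amounts to the limit after division by $q^3$. This is where the very special Picard geometry of $\mathbb P^1 \times \mathbb P^1$ --- split Picard group with rational effective cone, K\"unneth vanishing for diagonal line bundles, and Frobenius pull-backs of aCM line bundles being explicit direct sums of line bundles --- is essential and delivers the rationality.
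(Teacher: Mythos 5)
Your key ingredients are the right ones (MCM classification on the smooth quadric forces a splitting into line bundles, and the Picard geometry of $\mathbb P^1\times\mathbb P^1$ makes every antiample threshold rational), but you have missed the fact that makes the whole ``bridging'' step superfluous, and that step, as you yourself note, is not actually carried out. In the paper's indexing $\Syz_2=\ker(\delta_2\colon F_2\to F_1)$ is the \emph{third} syzygy module of $M$ in the usual count; since $R$ is Cohen-Macaulay of dimension $3$ and $M$ has depth $0$, the depth-of-syzygies formula gives $\operatorname{depth}\Syz_2=\min(0+3,\,3)=3$, so $\Syz_2$ is already maximal Cohen-Macaulay. You do not need Eisenbud periodicity to reach an MCM syzygy at some unspecified later stage $n\geq 3$ and then fight your way back to $\Syz_2$ through alternating long exact sequences. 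The MCM classification (Kn\"orrer periodicity / the spinor-bundle classification, which you correctly invoke) applies directly to $\Syz_2$ and shows it splits as $\bigoplus_j\shM^{\alpha_j}\otimes\shN^{\beta_j}$. At that point Theorem~\ref{hilbertkunzsplitting}, whose hypothesis is precisely that $\Syz_d$ splits, applies verbatim with $d=2$, and the rationality of all intersection numbers and antiample thresholds on $\nesenum(\surface)=\Z^2$ gives the conclusion.

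The part of your argument that is genuinely incomplete is exactly the part the depth observation lets you delete. You write that the intermediate $h^1$ correction terms ``can be re-expressed \ldots purely in terms of cohomology of explicit line bundle summands'' but do not show this, and it is not obvious: the Frobenius pull-backs of the intermediate syzygies $\Syz_i$ for $2<i<n$ are not a priori direct sums of line bundles on $\surface$ (it is only $\Syz_n$ that you know to split), so controlling $\sum_m h^1((F^{e*}\Syz_i)(m))$ asymptotically is precisely the kind of intermediate-cohomology problem the paper emphasizes is hard in higher dimension. The K\"unneth vanishing $H^1(\O_\surface(k))=0$ you cite only handles the free summands $F_i$, not the non-free $\Syz_i$. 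So as written your proof has a real gap; recognizing $\Syz_2$ as MCM closes it and reduces to the paper's one-line application of the splitting theorem.
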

\begin{proof}
Let 
\[ 0 \longrightarrow \Syz_2 \longrightarrow F_2 \longrightarrow F_1 \longrightarrow F_0 \longrightarrow M \longrightarrow 0 \]
be the truncated minimal free resolution of $M$. Then $\Syz_2$ is a maximal Cohen-Macaulay module.
The same is true for the corresponding
situation on $S$. By a standard classification result it is known that the only
maximal Cohen-Macaulay modules on quadrics are spinor bundles and certain line bundles,
and that in the given dimension the spinor bundles are
themselves line bundles
(see \cite{ottavianispinor},  \cite{knoerrerperiodicity}). Hence we have $\Syz_2 \cong \shL_1 \oplus \cdots \oplus \shL_r$.
So the result follows from Theorem \ref{hilbertkunzsplitting} and the shape of the intersection form on
$\nesenum (\surface) = \operatorname{Pic} (\surface)= \Z^2$ (see \cite[Example V.1.10.1]{hartshorne}).
\end{proof}

\section{Determinantal quartics}
\label{determinantalquartic}

In this section we describe determinantal quartic surfaces following \linebreak \cite{beauvilledeterminantal},
which give explicit examples of smooth projective surfaces
where the ample cone equals the effective cone up to closure (at least on certain subplanes inside the N\'{e}ron-Severi group)
and where the boundary of the ample cone is irrational.
This part is heavily influenced by the papers \cite{oguisoentropy} of K. Oguiso and
\cite{festigarbagnatigeemenluijk} by D. Festi, A. Garbagnati, B. van Geemen and R. van Luijk,
see in particular \cite[Lemma 3.1, Corollary 3.4, Theorem 4.1, Remark 4.2]{oguisoentropy} and
\cite[Theorem 1.2, Proposition 2.2, Theorem 4.5]{festigarbagnatigeemenluijk}.
I am especially grateful to R. van Luijk for explaining
several aspects in positive characteristics of \cite{festigarbagnatigeemenluijk} to me.

A determinantal quartic in four variables is given by the determinant $F= \operatorname{det} A$ of a matrix
\[ A   =   (  L_{ij} )_{1 \leq i,j \leq 4}  \, ,  \]
where the $L_{i j}$ are linear polynomials in $P=K[X,Y,Z,W]$. Hence a determinantal quartic is a homogeneous form of degree four. It defines
a projective surface $V_+(F) \subset {\mathbb P}^3$ of degree four, hence it is a $K3$ surface (provided it  is smooth)
and its canonical class is trivial. Further properties like smoothness and the shape
of the Picard group and the intersection form depend on the linear entries of the matrix.

\begin{lemma}
\label{determinantalbasic}
A smooth determinantal quartic surface $S$ contains a smooth curve $C$ of genus $3$ and degree $6$.
If $H$ denotes the ample class corresponding to $\O_\surface(1)$, then the intersection form is given by
$H^2=4$, $H.C= 6$, $C^2=4$ on the plane spanned by $H$ and $C$ inside the real N\'{e}ron-Severi group $\nesenum (S)_\R$.
The positive doublecone has irrational boundaries, there are no
integral (or $\Q$-) curves with self intersection $-2$ or $0$ and
the effective cone equals the positive cone up to closure. 
If the Picard rank of $S$ is two, then the ample cone equals the effective cone up to closure.
\end{lemma}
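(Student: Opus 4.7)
My plan is to build $C$ via Beauville's construction, read off the intersection form, and let the explicit quadratic form do most of the cone-theoretic work, leveraging the $K3$ structure of $S$.

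\emph{Step 1 (construction and numerics).} Following \cite{beauvilledeterminantal}, the map $A\colon\O_{\mathbb P^3}(-1)^4\to\O_{\mathbb P^3}^4$ drops rank along $S=V_+(F)$; the cokernel of its restriction to $S$ is an invertible sheaf whose associated linear system cuts out a smooth curve $C$ of degree $6$ (smoothness either being generic, or checked for the specific matrices as in \cite{festigarbagnatigeemenluijk}). Since $S$ is a smooth quartic, it is a $K3$ surface with $K_S=0$; adjunction for $C$ then gives $C^2=2g(C)-2=4$. Combined with $H^2=4$ (the degree of $S$) and $H.C=\deg C=6$, this pins down the Gram matrix $\begin{pmatrix}4 & 6\\ 6 & 4\end{pmatrix}$ on $\langle H,C\rangle$, of discriminant $-20$.

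\emph{Step 2 (irrationality and absence of small classes).} The quadratic form $q(a,b)=(aH+bC)^2=4(a^2+3ab+b^2)$ vanishes exactly when $a/b=(-3\pm\sqrt{5})/2$, so the positive doublecone has irrational boundaries; a rational $q=0$ solution would force $\sqrt{5}\in\Q$. For $q=-2$, the integral case fails by parity: $2(a^2+3ab+b^2)=-1$ has no integer solution. For the rational version, I would write $a=p/n$, $b=q/n$ with $\gcd(p,q,n)=1$, deduce $n=2m$, and obtain $p^2+3pq+q^2=-2m^2$; completing the square rewrites this as $5q^2-(2p+3q)^2=8m^2$. Reducing modulo $5$ and noting that $2$ is a non-square mod $5$ forces successively $5\mid m$, then $5\mid(2p+3q)$, then (using the equation again) $5\mid q$, and finally $5\mid p$, contradicting primitivity of $(p,q,n)$.

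\emph{Step 3 (effective $=$ positive in the plane).} Riemann-Roch on the $K3$ reads $\chi(D)=2+D^2/2$. For $D=aH+bC$ in the open positive cone containing $H$, we have $D^2>0$ and $D.H>0$, hence for $n\gg 0$ both $\chi(nD)>2$ and $h^0(-nD)=0$ (the latter because $-nD.H<0$), forcing $h^0(nD)>0$; so the closure of the positive cone in $\langle H,C\rangle$ lies in the effective cone. Conversely, on a $K3$ every irreducible curve satisfies $E^2\geq -2$ with equality exactly for $(-2)$-curves, by adjunction and $K_S=0$. In the two-dimensional effective cone cut out of the plane, an extremal ray with negative self-intersection would be spanned by a primitive class $v$ with $v^2=-2$, coming from a $(-2)$-curve whose class lies in $\langle H,C\rangle_\Q$; Step 2 forbids this, so both extremal rays of the plane effective cone coincide with the (irrational) isotropic boundary rays of the positive cone.

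\emph{Step 4 (ample $=$ effective for $\rho=2$).} If $\rho(S)=2$, then $\nesenum(S)_\Q=\langle H,C\rangle_\Q$, so Step 2 rules out any $(-2)$-curve on $S$ whatsoever. On a $K3$ without $(-2)$-curves, the ample cone coincides with the positive cone containing $H$ (the nef cone being the positive cone minus walls perpendicular to $(-2)$-curves, all absent here), and Step 3 identifies this with the effective cone up to closure. The most delicate point is the extremal-ray step in Step 3: in higher Picard rank one must argue that a negative extremal direction of the induced plane cone really corresponds to a $(-2)$-curve class that itself lies in $\langle H,C\rangle_\Q$; this is clean for $\rho=2$ and otherwise follows from taking the primitive integer generator of the extremal ray and invoking the $K3$ bound $E^2\geq -2$ on irreducible components.
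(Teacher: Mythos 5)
Your proof is correct and follows essentially the same route as the paper's: cite Beauville for the genus-$3$, degree-$6$ curve $C$, read off the Gram matrix via adjunction and $K_S=0$, show the discriminant-$(-20)$ form has no $\Q$-points of norm $0$ or $-2$ in the $H$--$C$ plane, then combine the $K3$ bound $E^2\ge -2$ with Riemann--Roch to get the cone equalities (with Picard rank two needed to identify the plane with all of $\nesenum(S)_\Q$). The one substantive deviation is that the paper changes basis to $D=2H-C$ so the form becomes $4(x^2+xy-y^2)$ and rules out norm $-2$ by a $2$-adic parity argument valid for all $c$ of odd $2$-adic valuation, whereas you stay in the $\{H,C\}$-basis and run a mod-$5$ descent tailored to $c=-2$; both are fine, the paper's being marginally more economical. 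You correctly flag that the effective-cone-versus-positive-cone comparison in the plane is delicate when $\rho>2$; the paper has the same subtlety, and its downstream use (Lemma \ref{ampleeffectivesublattice}) is in any case restricted to the $\rho=2$ situation.
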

\begin{proof}
The first statement is \cite[Corollary 6.6]{beauvilledeterminantal}.
The intersection numbers follow from this and the adjunction formula
$C^2 = C.(C+K_S) =  2g-2$ (see \cite[Proposition V.1.5]{hartshorne}).
Setting $D=2H-C$ the intersection form on this plane is given by
\[H^2=4,\, H.D= H.(2H-C)= 2 \text{ and }  D^2 = (2H-C)^2 = -4 \, . \]
Thus, in the plane spanned by $H$ and $D$ inside $\nesenum (S)_\R$, the quadratic form is given by
$4x^2 +4xy - 4y^2 $, where $x$ and $y$ denote the coordinates for $H$ and $D$.
The boundary of the positive doublecone is hence given by 
\[ y = \frac{\pm \sqrt{5} + 1 }{2} x \]
in these coordinates, so the slopes are given by the golden ratio.
From this it follows that non-zero integral or $\Q$-divisors in this plane do not have zero self intersection.
If we write $x= n_1/m$ and $y=n_2/m$ as rational numbers, then the possible values of the quadratic form are
\[ \frac{4}{m^2}  (n_1^2 +n_1n_2 -n_2^2 )\]
The integral equation
\[ 4   (n_1^2 +n_1 n_2 -n_2^2 )  = c m^2    \]
can only have a solution when the exponent of $2$ in $c$ is even. This follows immediately
from looking at the possible parities
of $n_1$, $n_2$.
So in particular the quadratic form can not have the value $-2$ (not even for rational arguments).
Other negative integral values of the quadratic form of a $K3$ surface are excluded by the adjunction formula,
so the form is positive on the effective cone and therefore the effective cone lies inside the positive cone.
As the other inclusion (up to closure) always holds (\cite[Corollary V.1.8]{hartshorne}), these two cones coincide.

If the Picard rank is two, then the complete intersection behavior is encoded in our plane, and so there are no $0$- or
$-2$-curves at all
on the surface. Hence every effective divisor is numerically effective and these form the closure of the ample cone.
\end{proof}

\begin{rmk}
\label{idealminors}
Following \cite[6.7]{beauvilledeterminantal} (see also \cite[Proposition 2.5]{festigarbagnatigeemenluijk}),
the ideal corresponding to the curve $C$ can be made more explicitly.
If we delete in the matrix
$A$ the first column and call the remaining $4 \times 3$-matrix $B$, then we have an exact sequence
\[0 \longrightarrow P (-4)^3 \stackrel{B}{\longrightarrow } P (-3)^4 \longrightarrow P \longrightarrow P/I \longrightarrow 0 \, , \]
where the homomorphism in the middle is given by the maximal minors of $B$. The exactness is a direct consequence of linear algebra,
applied to $B$. By computing the determinant of $A$ using the first row,
we also see again that the curve $C=V_+(I)$ lies on the surface
$V_+(F)$. The ideal $I$ has height two in the polynomial ring and height one in $R=P/(F)$.
\end{rmk}

\begin{example}
Determinantal equations are getting quickly quite long and complicated unless they define a singular variety.
The following example was
found by D. Brinkmann with the help of \cite{M2}. The determinant of
\[ A = \begin{pmatrix}
X & Y  &  Z  & 0   \\
Y & Z   & 0  & W   \\
Z  & 0 &  W  & X   \\
W & W  & X  & Y   
 \end{pmatrix}   \, 
\]
defines a projective variety which is smooth unless the characteristic is $p=37013$, $651881$, $742991$. The determinant is
\[- XYZW  -X^3Z -Y^3W -XW^3 -YZ^3+ YW^3 +X^2Y^2 +Z^2W^2 +XZ^2W \, . \]
The ideal defining the curve $C$ is
$I= (-X^2Z + YZW-W^3, \,Y^2W-X^2Y+XZW, \, -XYW-YZ^2 + ZW^2,\, -YW^2-XZ^2 )$ according to
Remark \ref{idealminors}.
\end{example}

The outcome so far is that whenever we can establishe a smooth determinantal quartic surface with Picard rank two, then
we can apply Lemma \ref{determinantalbasic} and  Corollary \ref{sheafcohomologyirrational}
to produce examples of line bundles with irrational Frobenius-asymptotical behavior. To achieve this,
we will now work with more specific determinantal quartics, so that the Picard rank is two and that there
exists nontrivial automorphisms.
In \cite{oguisoentropy}, the author gave an example of a $K3$ surface $S$ over $\mathbb C$
where the automorphism group is large in the sense that there exists a fixpoint free automorphism
such that the corresponding homomorphism on the second singular cohomology $H^2 (S, {\mathbb C})$
has an eigenvalue whose absolute value is larger than $1$.
In \cite{festigarbagnatigeemenluijk}, the authors established a relationship between this example and
work of Cayley \cite{cayleymemoir} and reinterpreted it in terms of determinantal quartic surfaces.
To establish that there exists such surfaces with Picard rank two, they looked at the following example.

\begin{example}
\label{fgglmatrix}
Consider the matrix
\[ A = \begin{pmatrix}
X & Z  &  Y+Z  & Z+W   \\
Y & Z+W  & X+ Y+Z+W  & X+W   \\
X+Z  & X+Y+ Z+W  &  X+Y  & Z   \\
X+Y+W & X+ Z  & W  & Z   
 \end{pmatrix}   \, .
\]
The surface $\surface = V_+(F)$,  where $F= \operatorname{det} (A)$, is smooth in characteristic zero and in characteristic $2$ by
\cite[Theorem 4.5]{festigarbagnatigeemenluijk}, in fact it has singularities exactly in characteristics
$3$, $5$, $7$, $13$, $ 443$, $5399$, $9562057$, $578193147733$, $2202537665175172539619840469$
(this was been checked with the help of Macaulay 2 \cite{M2} by D. Brink\-mann).
With a careful analysis of the situation in characteristic two it was shown in
\cite[Theorem 4.5]{festigarbagnatigeemenluijk} that
the Picard rank for this equation in characteristic two is two.
From this they deduce that the Picard rank in characteristic zero is also two,
using the fact that for a variety over a number field the N\'{e}ron-Severi group
of the fiber in characteristic zero embeds into the N\'{e}ron-Severi group of any smooth special
fiber in positive characteristic (see \cite[Example 20.3.6]{fultonintersection})
and that the Picard group is the N\'{e}ron-Severi group for hypersurfaces of dimension $\geq 2$.
\end{example}

From this example it follows that for a general determinantal quartic surface in characteristic zero the Picard rank is also two.

The following lemma shows that in order to get irrational behavior of the Frobenius asymptotic for almost all prime characteristics
it is enough to have Picard rank two in characteristic zero.

\begin{lemma}
\label{ampleeffectivesublattice}
Let $F$ be a determinantal equation of degree four in four variables over $\Z$ and suppose that the surface $V_+(F)$ is smooth
with Picard rank two in characteristic zero (over $\overline{\mathbb Q}$).
Then for almost all prime numbers,
the effective cone equals the ample cone inside the $H-D$-plane of $\nesenum (S_p)$  up to closure
($H$ and $D$ as in Lemma \ref{determinantalbasic}).
\end{lemma}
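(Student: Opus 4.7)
The plan is to leverage the Cayley--Oguiso automorphism of $\surface$, whose relevant features in this setting were worked out in \cite{oguisoentropy} and \cite{festigarbagnatigeemenluijk}. This automorphism $\phi$ is constructed explicitly from the matrix $A$, so it is defined over $\Z[1/N]$ for some integer $N$; excluding also the finitely many primes at which $\surface$ has singular reduction, we obtain for almost all $p$ an automorphism $\phi_p \colon S_p \to S_p$, whose induced map $\phi_p^*$ on $\nesenum(S_p)$ preserves the intersection form, the ample cone, and the pseudoeffective cone. Because $\phi$ is defined over $\Z$, its action on N\'eron--Severi commutes with the specialization embedding $\nesenum(S_0) \hookrightarrow \nesenum(S_p)$. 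In characteristic zero $\nesenum(S_0) = \Z\langle H, D\rangle$ by assumption, so the real span $V \subseteq \nesenum(S_p)_\R$ of the image of this specialization is exactly the plane spanned by $H$ and $D$ and is $\phi_p^*$-invariant; moreover $\phi_p^*|_V$ is represented by the same integer matrix as $\phi^*|_{\nesenum(S_0)_\R}$. By \cite[Theorem 4.1]{oguisoentropy} this matrix has real distinct irrational eigenvalues, one of absolute value strictly greater than one.

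The heart of the argument is then a purely linear-algebraic observation. Let $K \subseteq V$ be any $\phi_p^*$-invariant convex cone with nonempty interior. Then $(\phi_p^*)^2$ fixes each of the two extremal rays of $K$ (since $\phi_p^*$ permutes them), so every extremal ray of $K$ is an eigenray of some positive power of $\phi_p^*$. Now if $v \in V$ is an eigenvector with eigenvalue $\lambda$ of such a power, then invariance of the intersection form $Q$ yields $Q(v) = \lambda^2 Q(v)$, and since the eigenvalues of $\phi_p^*|_V$ are irrational with absolute value $\neq 1$, we conclude $Q(v) = 0$. Hence the extremal rays of $K$ lie on the null cone of $Q|_V$, which by Lemma \ref{determinantalbasic} consists exactly of the two irrational golden-ratio rays bounding the positive cone of $V$. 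Applying this to the intersection with $V$ of the effective cone and of the ample cone of $S_p$---each is a convex, $\phi_p^*$-invariant cone containing $H$ in its interior, because $H$ is ample on $S_p$---we conclude that each equals the positive-cone half of $V$ on the $H$-side up to closure, so the two cones agree.

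The main obstacle is ensuring that the Cayley--Oguiso construction has a good integral model: the automorphism $\phi$, the classes $H$ and $D$, and the representing integer matrix of $\phi^*|_V$ must all have good reduction simultaneously outside a fixed finite set of primes. This compatibility is essentially built into the explicit determinantal description of the automorphism given in \cite{oguisoentropy} and \cite{festigarbagnatigeemenluijk}, combined with the standard facts that ampleness, the smoothness of $\surface$ and of $C$, and the values of the intersection pairing all spread out from characteristic zero. Once these compatibilities are granted, the argument above goes through uniformly and produces the desired equality of cones for all but finitely many primes.
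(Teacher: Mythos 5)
Your proof is correct and reaches the conclusion by the same overall strategy as the paper — specialize the Cayley--Oguiso automorphism to characteristic $p$, observe that it acts on the $H$-$D$-plane $V$ by the same integer matrix as in characteristic zero (with irrational eigenvalues of modulus $\neq 1$), and use invariance of the cones under this action — but the finishing step is genuinely different. The paper argues ``from the inside'': it explicitly computes the orbit $M^{3n}\cdot H = f_{6n-2}H + f_{6n-1}D$ (Fibonacci coefficients) under positive iterates, shows these rays approximate one boundary of the positive cone, and likewise uses negative iterates for the other boundary; since all these lie in the ample cone and the ample cone sits inside the positive cone, the two coincide. You argue ``from the boundary'': any $\phi_p^*$-invariant proper convex cone in $V$ with nonempty interior has two extremal rays, these are permuted by $\phi_p^*$ and hence fixed by $(\phi_p^*)^2$, so they are eigenrays, and an eigenray of an isometry of a quadratic form $Q$ with eigenvalue $\lambda\neq\pm 1$ satisfies $Q(v)=\lambda^2 Q(v)$, forcing $Q(v)=0$. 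Your route has two advantages: it avoids the explicit Fibonacci computation, and it handles the effective cone and the ample cone symmetrically in one stroke, whereas the paper shows the statement for the ample cone and then invokes Lemma \ref{determinantalbasic} for the effective cone.

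One small point you should make explicit: you need the cones you intersect with $V$ to be \emph{pointed} (i.e.\ proper cones, not half-planes), so that they indeed have exactly two extremal rays and your permutation argument applies. For the closure of the ample cone this is immediate from containment in the positive cone; for the pseudoeffective cone one uses the standard fact that it contains no line (if $D$ and $-D$ are both pseudoeffective then $D\cdot H = 0$ for an ample $H$, forcing $D\equiv 0$). With this observation supplied, your argument is complete.
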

\begin{proof}
We will work with the main result of \cite[Theorem 4.1]{oguisoentropy} and \cite[Theorem 1.2]{festigarbagnatigeemenluijk}, namely
that in this situation there exists a fixpoint free automorphism on the surface $S_{\overline {\mathbb \Q}}$
with rather special properties.
This automorphism is best understood by looking at the induced homomorphism on the Picard group $\nesenum (\surface) \cong \Z^2$.
The matrix $M =\begin{pmatrix}  1 &1 \\ 1 & 2 \end{pmatrix}$ has the property that its $n$th iteration is
$M^n = \begin{pmatrix}  f_{2n-2}  & f_{2n-1}  \\ f_{2n-1} & f_{2n} \end{pmatrix}$, where $f_k$ is the $k$th Fibonacci number.
Multiplication with this matrix defines an isomorphism of $\Z^2$ which respects the quadratic form
$4(x^2 +xy-y^2)$. Now the algebraic automorphism on $S$ induces the homomorphism $M^3$ on the Picard group by
\cite[Theorem 1.2]{festigarbagnatigeemenluijk}.

The automorphism can be defined over an algebraic extension of  $\Z$ after inverting a natural number.
Hence for almost all prime numbers the special surface $S_{\overline{ \kappa(p)}}$ in positive characteristic $p$
(is again smooth and) has an automorphism
which acts on the divisors $H$ and $D$ as in characteristic zero
(we do not know how it acts on the Picard group in case this group has higher rank for a specific prime number)
and the induced homomorphism is the same. As this homomorphism stems from an algebraic automorphism, it must respect the ample cone.
The ample cone in positive characteristic is contained inside the positive cone.
The ample cone contains $H$ and all
its images under the various automorphisms. These images are $ f_{2n-2} H + f_{2n-1} D $ (where $n$ is a multiple of $3$).
The rays given by these points approximate the upper boundary of the positive cone arbitrarily good. The inverse matrix of $M$ is
$M^{-1} = \begin{pmatrix} 2 & -1 \\ -1 & 1 \end{pmatrix}$, and the images of $H$ under these iteratives approximate the
lower boundary of the positive cone arbitrarily good. Therefore the ample cone equals the positive cone and by
Lemma \ref{determinantalbasic} also the effective cone up to closure.
\end{proof}

\begin{rmk}
Note that in the previous 
Lemma \ref{ampleeffectivesublattice} it is essential  to argue with the help of the automorphism. Of course,
an ample divisor $rH+sD$ in characteristic zero will be ample for almost all prime characteristics by the openness of ampleness
(see \cite[Theorem 1.2.17]{lazarsfeldpositivity1}).

.
However, the bound on the prime numbers depend on the divisor itself,
and so we can not exclude that the ample cone in the $H-D$-plane
is in all characteristics strictly smaller than the positive cone. From the openness of ampleness we can only deduce that the
ample cone converges to the positive cone as $p \rightarrow \infty$.
Anyway, this property is strong enough to establish at least that the limit of Hilbert-Kunz
multiplicities for $p \rightarrow \infty$ is irrational.
\end{rmk}

\begin{cor}
\label{oguisoirrational}
Let $\surface= V_+(\operatorname{det} A)$ be a determinantal quartic surface
defined over $\Z$ which is smooth and has Picard rank two in characteristic zero.
Then for almost all prime reductions there exist invertible sheaves $\shL$ and $\shM$ such that
the  limits ($\aat$ denotes the antiample threshold of $\shL$)
\[ \lim_{e \rightarrow \infty} \frac{ \sum_{m \in {\mathbb N}} h^2( \shL^{q}    (m)   )  }{ q^{3} } 
=  \frac{\aat}{2} \left(   \frac{1}{3}  \aat^2 H^2  +  \aat L.H   +  L^2    \right)   \, ,   \]
\[ \lim_{e \rightarrow \infty} \frac{ \sum_{m \in {\mathbb N}} h^1( \shM^{q}    (m)   )  }{ q^{3} }    \]
and
\[ \lim_{e \rightarrow \infty} \frac{ \sum_{m \in {\mathbb Z}} h^1( \shM^{q}    (m)   )  }{ q^{3} }   \,    \]
are irrational numbers. Moreover, these limits are independent of the characteristic.
\end{cor}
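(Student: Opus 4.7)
The plan is to assemble the corollary directly from the three preceding results: Lemma \ref{determinantalbasic} (intersection form and irrational boundary), Lemma \ref{ampleeffectivesublattice} (equality of ample and effective cones on the $H$-$D$-plane for almost all $p$), and Corollary \ref{sheafcohomologyirrational} (the asymptotic cohomological formula). The only additional verification needed is (i) that Kodaira vanishing is available in every positive characteristic here, and (ii) that the explicit value of the limit depends only on data that is constant in $p$.

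First I would fix a determinantal quartic $\surface_{\Z}=V_+(\det A)$ with the assumed properties and, for each prime $p$ outside the finite bad set (where $\surface_p$ acquires singularities or where Lemma \ref{ampleeffectivesublattice} fails), pass to the geometric fibre $\surface_p$ over $\overline{\kappa(p)}$. By Lemma \ref{determinantalbasic} the plane in $\nesenum(\surface_p)_\R$ spanned by $H$ and the determinantal curve class $D=2H-C$ carries the integral quadratic form $4(x^2+xy-y^2)$, whose isotropic rays have irrational (golden-ratio) slopes, and by Lemma \ref{ampleeffectivesublattice} the ample and effective cones inside this plane coincide with the positive cone up to closure. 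Since $\surface_p$ is a $K3$ surface, Kodaira (and Kodaira--Ramanujam) vanishing holds in every characteristic by Saint-Donat, which is precisely the input required by Corollary \ref{sheafcohomologyirrational}.

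Next I would invoke Remark \ref{sublattice}, which is exactly the form of Corollary \ref{sheafcohomologyirrational} one needs when only a rank-two sublattice of $\nesenum(\surface_p)$, rather than all of it, is known to have the ample-equals-effective property. Applied to the $H$-$D$-plane, it yields an antiample divisor class $L=cH+D$ (with $c$ a suitable integer) such that $\shL=\O_\surface(L)$ has antiample threshold $\aat$ equal to an irrational square root of a rational number. Corollary \ref{sheafcohomologyirrational} then provides both of the claimed limit formulas: one for $h^2(\shL^q(m))$ as given by Corollary \ref{topasymptotic}, and two for $h^1(\shM^q(m))$ summed over $\N$ and over $\Z$, where one can take $\shM=\shL$ as noted at the end of that proof.

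For the final assertion that the limits do not depend on $p$, I would simply observe that all three expressions produced by Corollary \ref{sheafcohomologyirrational} are polynomial in $\aat$ with coefficients that are polynomials in the intersection numbers $H^2$, $H.L$, $L^2$. These numbers, together with $\aat$ itself, are computed entirely from the rank-two sublattice spanned by $H$ and $D$; this sublattice and its intersection pairing are determined by the $\Z$-model (the class $D$ being defined by the minors in Remark \ref{idealminors}, independent of $p$). Therefore the numerical output is the same for every allowable prime reduction. The main obstacle is not really an obstacle but a bookkeeping point: one must make sure that when the Picard rank of $\surface_p$ happens to jump above two, the extra classes do not force a different antiample threshold for $L$; this is handled precisely by Lemma \ref{ampleeffectivesublattice}, which produces the automorphism whose iterates force the ample cone in the $H$-$D$-plane to fill the positive cone regardless of what else happens in $\nesenum(\surface_p)$.
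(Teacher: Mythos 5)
Your proposal is correct and follows essentially the same route as the paper: both assemble the result from Lemma \ref{ampleeffectivesublattice} together with Corollary \ref{sheafcohomologyirrational} (via Remark \ref{sublattice}), and both argue independence of $p$ from the fact that the limit is determined by intersection data on the $H$-$D$-sublattice, which is constant across the family. Your write-up is more explicit than the paper's one-sentence proof on two points the paper leaves tacit — that Kodaira vanishing on a $K3$ surface is available in all characteristics, and that Remark \ref{sublattice} is precisely what insulates the argument against a possible jump in Picard rank at a given prime — but these are elaborations, not a different argument.
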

\begin{proof}
This follows from Corollary \ref{sheafcohomologyirrational} (taking Remark \ref{sublattice} into account) and 
Lemma \ref{ampleeffectivesublattice}. The last part follows from the construction of the
family and the computation in Corollary \ref{ampleeffectivepositive} shows that these numbers
only depend on intersection properties of $\shL$ and $\shM$
which are constant in the family.
\end{proof}

\begin{rmk}
For characteristic two, the result of Corollary \ref{oguisoirrational} follows directly from Lemma \ref{determinantalbasic} and the proof of
\cite[Theorem 4.5]{festigarbagnatigeemenluijk} where the authors show for the special matrix mentioned in Example \ref{fgglmatrix}
above that its Picard rank is two. Also, as van Luijk has pointed out, these explicit calculations
in characteristic two show that the Tate conjecture holds for the given surface.
It follows that we are in the situation of \cite[Theorem 1 (1)]{charlesk3} and hence there are infinitely many
prime numbers such that the Picard number of the special fiber is also two, hence we obtain the result for infinitely many
prime numbers also from this.
\end{rmk}

In the next section we will also need the following lemma.

\begin{lemma}
\label{oguisoh1}
Let $\surface =V_+( \det (A)) $ be a determinantal quartic surface
defined over $\Z$ which is smooth and has Picard rank two in characteristic zero. Let
$H$ be the ample class (corresponding to $\O_\surface(1)$)
and let $D$ be the class satisfying $H^2=4$, $H.D=2$, $D^2=-4$.
Let $\shM$ denote the invertible sheaf corresponding to $D$.
Then  $H^1(S,\shM(m))= 0$ for all $m \in \Z$ for almost all prime numbers.
\end{lemma}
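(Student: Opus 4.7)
The plan is to split the range of $m$ into three regimes, corresponding to where $D+mH$ sits relative to the positive/ample cone in the $H$-$D$-plane. Since $\surface$ is a $K3$ surface (smooth quartic in $\mathbb{P}^3$), we have $\omega_\surface \cong \O_\surface$, Serre duality gives $H^1(\surface, \shM(m)) \cong H^1(\surface, \shM^{-1}(-m))^\dual$, and Riemann-Roch reads $\chi(\shN) = \frac{\shN^2}{2}+2$. With the intersection numbers given, $(D+mH)^2 = 4(m^2+m-1)$, hence $\chi(\shM(m)) = 2m(m+1)$. For almost all primes Lemma \ref{ampleeffectivesublattice} applies, so in the $H$-$D$-plane the ample and effective cones agree (up to closure) with the positive cone $\{4(x^2+xy-y^2)>0,\ x>0\}$ in the coordinates where $(x,y)$ corresponds to $xH+yD$.

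First I would treat the ample regime. For $m\ge 1$ the class $D+mH$ has coordinates $(m,1)$ with $m^2+m-1>0$ and $m>0$, so $D+mH$ lies in the interior of the positive cone, hence is ample. Kodaira vanishing on $K3$-surfaces holds in every characteristic (\cite{saintdonat}, \cite[Proposition~3.1]{huybrechtsk3}), giving $H^1(\surface, \shM(m))=0$. Symmetrically, for $m\le -2$, Serre duality reduces the problem to $H^1(\surface, \shM^{-1}(-m))$, and $-D-mH$ has coordinates $(-m,-1)$ with $m^2+m-1>0$ and $-m>0$, so $\shM^{-1}(-m)$ is ample and Kodaira vanishing again yields the desired vanishing.

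The remaining values $m=0$ and $m=-1$ are precisely where $\chi(\shM(m))=0$, so neither Kodaira nor Riemann-Roch alone gives vanishing; here the strategy is instead to show $h^0=h^2=0$, which forces $h^1=-\chi=0$. For $m=0$, the class $D$ has coordinates $(0,1)$ with $0+0-1<0$, so $D$ lies strictly outside the closure of the positive cone, hence is not pseudoeffective; thus $h^0(\shM)=0$. The same is true of $-D$, so by Serre duality $h^2(\shM)=h^0(\shM^{-1})=0$. The case $m=-1$ is identical: $D-H$ has coordinates $(-1,1)$ with $1-1-1<0$ and $H-D$ has coordinates $(1,-1)$ with $1-1-1<0$, placing both outside the closure of the positive cone and hence outside the (closed) effective cone.

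The main subtlety, rather than an obstacle, is justifying that non-pseudoeffectivity in the full $\nesenum (\surface)$ follows from sitting outside the closed positive cone in the $H$-$D$-plane: this uses that Lemma \ref{ampleeffectivesublattice} identifies the effective and ample cones \emph{on} that plane (up to closure), so an integral class inside the plane is effective if and only if it lies in the closed positive cone restricted to the plane. All of the above is independent of characteristic in its formulation, and the only characteristic dependence comes through the hypothesis of Lemma \ref{ampleeffectivesublattice}, which excludes only finitely many primes; hence the stated vanishing holds for almost all $p$.
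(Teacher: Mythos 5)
Your argument follows essentially the same route as the paper: for $m\neq 0,-1$ use that $(D+mH)^2>0$ together with Lemma \ref{ampleeffectivesublattice} to conclude ampleness or antiampleness and then invoke Kodaira vanishing on the $K3$ surface, and for $m=0,-1$ compute $\chi(\shM(m))=0$ by Riemann--Roch and kill $h^0$ and $h^2$ because $D$, $-D$, $D-H$, $H-D$ all have negative self-intersection and hence lie outside the closed positive cone (which, on the $H$--$D$ plane, coincides with the effective cone for almost all $p$). The paper states the $h^0=h^2=0$ step more tersely (``neither ample nor antiample''); you correctly spell out that the force of this comes from the cone equalities of Lemma \ref{ampleeffectivesublattice}, which is the right way to make that step rigorous.
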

\begin{proof}
Suppose that the surface is smooth in characteristic $p$.
We consider the self intersection number
\[ (mH+D)^2 = 4 m^2 +4m  -4 = 4(m^2+m-1) \]
which is positive unless $m=0,-1$.
Hence $\shM(m)$ will either be ample or antiample by Lemma \ref{ampleeffectivesublattice}
for $m \neq 0,-1$. In these cases
$H^1(S, \shM(m))=0$
by Kodaira vanishing (which holds on a $K3$ surface, see \cite[Proposition 3.1]{huybrechtsk3}) and since $\omega_S= \O_S$.
For $m=0,-1$ we need to take a closer look at Riemann-Roch (see \cite[Theorem V.1.6]{hartshorne} for the surface case).
First note that the self intersection number in both remaining cases is $-4$. On a $K3$ surface
we have $h^0(\O_S)=1$, $h^1(\O_S)=0$ and $h^2(\O_S)=1$ and hence $\chi (\O_S) =2$. Therefore by Riemann-Roch we have
\begin{eqnarray*}
 \chi( \shM(m)) & = &  h^0( \shM(m)  )-h^1( \shM(m))+h^2( \shM(m)) \\
  & = &  \frac{1}{2}  (mH+D)^2 - \frac{1}{2}  (mH+D). K_S  + \chi(\O_S) \\
 & = &  \frac{-4}{2}  +0+ 2 
 \\ & = & 0  \, .
\end{eqnarray*}
Because $\shM(0)$, $\shM(-1)$ are neither ample nor antiample they have neither nonzero sections nor second cohomology.
Therefore also the first cohomology vanishes.
\end{proof}

\section{Interpretation as local-cohomological Hilbert-Kunz multiplicities}
\label{interpretation}

Let $Y= \Proj R$, where $R$ is a standard-graded ring, endowed with $\O_Y(1)$.
For a  quasicoherent sheaf $\shM$ on $Y$ one sets $\shM(m)= \shM \otimes_{\O_Y} \O_Y(m)$ and
\[ M:=  \Gamma_*(\shM) = \bigoplus_{m \in \Z} \Gamma(Y, \shM (m)) \, , \]
which is a graded $R$-module. The $\O_Y(m)$ are invertible sheaves on $Y$ related by
$\O_Y(m) \otimes_{\O_Y} \O_Y(m') \cong \O_Y(m+m')$,
and they are free on $D_+(x)$ for any linear polynomial $x \in R_1$.
 The tensor multiplication induces compatible actions
 $ \O_Y(\ell) \otimes_{\O_Y} \shM(m) \rightarrow \shM(\ell  + m)$.

We relate this to the pull-back $\pi^*(\shM)$ for the cone mapping
$\pi:U \rightarrow Y$, where $U=D(R_+) \subseteq \Spec R=X$ is the punctured spectrum.
The multiplicative group acts on $U$
(this corresponds to the grading by \cite[Proposition 4.7.3]{sga3})
and the quotient is the projective variety $Y$.
For a homogeneous element $h \in R$ of positive degree the cone mapping restricts to the affine morphism
$D(h) \rightarrow D_+(h)$ corresponding to the ring homomorphism
\[ \Gamma(D_+(h), \O_Y) =(R_h)_0    \longrightarrow     R_h \cong \Gamma(D(h), \O_X)   \,  \]
and we have
\[ \Gamma(D(h), \pi^* \O_Y)  \cong R_h \cong \bigoplus_{m \in \Z} (R_h)_m \cong \bigoplus_{m \in \Z} \Gamma(D_+(h) ,  \O_Y(m)  ) \, .\]
If $h$ has degree one, then $R_h$ has units in degree one and then $R_h \cong (R_h)_0 [T,T^{-1}]$.
These local isomorphisms together yield a global isomorphism
\[  \Gamma(U, \pi^* \O_Y)  =  \bigoplus_{m \in \Z} \Gamma(Y, \O_Y(m))   \,  \]
(which is $R$ if $R$ is normal of dimension $ \geq 2$).
Locally we have isomorphisms
\begin{eqnarray*}
 \Gamma (D(h), \pi^* \shM) &   \cong  & R_h \otimes_{(R_h)_0 }   \Gamma( D_+(h) , \shM)      \\
& \cong &   \bigoplus_{m \in \Z}     (R_h)_m \otimes_{(R_h)_0 }   \Gamma( D_+(h) , \shM) \\
& \cong &   \bigoplus_{m \in \Z}      \Gamma( D_+(h) , \O_Y(m) \otimes_{\O_Y}  \shM) \\
& \cong & \bigoplus_{m \in \Z} \Gamma(D_+(h), \shM(m) )
\end{eqnarray*}
and these isomorphisms are compatible with the action of
$ \Gamma(U, \pi^* \O_Y)  =  \bigoplus_{\ell \in \Z}\! \Gamma(Y, \O_Y(\ell))$
given by the above mentioned action of
$         \Gamma ( D_+(h), \O_Y( \ell) ) \!  =   (R_h)_\ell     $ on $ \Gamma(D_+(h), \shM (m) )$
with values in  $ \Gamma(D_+(h), \shM (\ell + m) )$.
The $\O_U$-module $\pi^*\shM$ is hence a $\Z$-graded $R$-module and the grading is locally given by
$\bigoplus_{m \in \Z} \Gamma(D_+(h), \shM (m) )$.
Therefore we get a graded isomorphism
\[\Gamma(U, \pi^* \shM ) = \bigoplus_{m \in \Z} \Gamma(Y, \shM(m)) =M \, . \] 
Th graded $\check{\rm C}$ech complex of $M$ restricted to $U$
(for a cover given by homogeneous elements)
is just the direct sum over all $m \in \Z$
of the $\check{\rm C}$ech complexes
for $\Gamma (Y, \shM(m))$. In particular, the sheaf cohomology of $M$ over $U$
is the direct sum over $m \in \Z$ of the sheaf cohomologies of $\shM(m)$
on $Y$. Hence we can translate the previous results to sheaf cohomology on the punctured spectrum and to local cohomology
(see also \cite[Theorem 20.4.4]{brodmannsharp}).

\begin{cor}
\label{module2irrational}
There exists a three-dimensional hypersurface domain $R=K[X,Y,Z,W]/(F)$ where $F$ is homogeneous of degree four and where $K$
has positive characteristic $p \gg 0$ 
and an $R$-module $M$ of rank one which is invertible on the punctured spectrum such that the limit
(the second local cohomological Hilbert-Kunz multiplicity)
\[  \operatorname{HK}^2 (M)  =   \lim_{e \rightarrow \infty}    \frac{ \lg   \left(  H^2_m (F^{e*} M  )  \right) }{p^{3e} }\]
is an irrational number. Moreover, this number is independent of $p$.
\end{cor}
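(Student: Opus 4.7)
The plan is to import the irrational sheaf-cohomological behavior of Corollary \ref{oguisoirrational} directly into local cohomology via the dictionary developed at the start of Section \ref{interpretation}. First I would fix a determinantal quartic $F \in \Z[X,Y,Z,W]$ whose vanishing locus in $\P^3_{\overline{\Q}}$ is smooth with Picard rank two; Example \ref{fgglmatrix} supplies such an $F$. For every sufficiently large prime $p$, Lemma \ref{ampleeffectivesublattice} together with Corollary \ref{oguisoirrational} produce, over $K := \overline{\mathbb{F}_p}$, an invertible sheaf $\shM$ on $\surface := \Proj R$ (with $R = K[X,Y,Z,W]/(F)$) such that
\[ \lim_{e \rightarrow \infty} \frac{ \sum_{m \in \Z} h^1(\surface, \shM^{p^e}(m)) }{p^{3e}} \]
is irrational and, by the final sentence of Corollary \ref{oguisoirrational}, takes a value that does not depend on the chosen large prime $p$.

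Next I would set $M := \Gamma_*(\shM) = \bigoplus_{m \in \Z} H^0(\surface, \shM(m))$. By Serre's theorem $M$ is a finitely generated graded $R$-module, and because $\shM$ is invertible on $\surface$ its sheafification on the punctured spectrum $U = D(R_+)$ agrees with the pull-back $\pi^* \shM$ under the cone projection $\pi \colon U \to \surface$ from Section \ref{interpretation}; in particular $M$ has rank one and is invertible on $U$.

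The heart of the argument is to identify $\lg H^2_\m(F^{e*} M)$ with the sheaf quantity above. Since the absolute Frobenius on $R$ restricts to the absolute Frobenius on $U$ and is compatible with the absolute Frobenius on $\surface$ along $\pi$, the sheafification of $F^{e*} M$ on $U$ is $\pi^*(\shM^{p^e})$. The graded $\check{\rm C}$ech decomposition recalled in Section \ref{interpretation} gives
\[ H^1(U, F^{e*} M) \cong \bigoplus_{m \in \Z} H^1(\surface, \shM^{p^e}(m)), \]
and the Remark at the start of Section \ref{hilbertkunzmodule} supplies $H^2_\m(F^{e*} M) \cong H^1(U, F^{e*} M)$. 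Each summand $H^1(\surface, \shM^{p^e}(m))$ is a finite-dimensional $K$-vector space that vanishes outside a bounded range of $m$ (a line bundle on a smooth projective surface has no $H^1$ in sufficiently positive or sufficiently negative twists), so the total dimension is finite and, since $K$ is algebraically closed, coincides with $\lg H^2_\m(F^{e*} M)$. Dividing by $p^{3e}$ and letting $e \to \infty$ yields exactly the limit of Corollary \ref{oguisoirrational}, which is irrational and characteristic-independent.

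The main bookkeeping obstacle is only the Frobenius-compatibility between the module-level pull-back on $R$ and the sheaf-level pull-back of $\shM$ along $\pi$; once one notes that this is nothing more than the functoriality of $\pi^*$ with respect to the absolute Frobenius on $U$ and on $\surface$, no further computation is needed. The finiteness issues (finitely many cohomologically nontrivial twists per fixed $e$, finiteness of $H^2_\m(F^{e*} M)$ as $R$ has an isolated singularity) are then standard.
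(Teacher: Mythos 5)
Your proposal follows the paper's own proof essentially verbatim: fix a determinantal quartic $F$ over $\Z$ with smooth Picard-rank-two fibre in characteristic zero, take $\shM$ from Corollary~\ref{oguisoirrational}, set $M = \Gamma_*(\shM)$, use the graded \v{C}ech dictionary of Section~\ref{interpretation} and Frobenius-compatibility with $\pi^*$ to identify $H^2_\m(F^{e*}M) \cong H^1(U, F^{e*}M) \cong \bigoplus_m H^1(\surface, \shM^{p^e}(m))$, and conclude. The argument is correct and complete.
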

\begin{proof}
We take $F=\operatorname{det} (A)$,
where $A$ is a $4\times 4$-matrix with linear entries as in Section \ref{determinantalquartic},
defined over $\Z$ and such that $\surface= V_+(F)$
is smooth in characteristic zero with Picard rank two. Let $p$ be a prime number as in Lemma \ref{ampleeffectivesublattice}.
We look at the invertible sheaf $\shM$ from Corollary \ref{oguisoirrational} and set
\[ M:=\Gamma_*(\shM)=\bigoplus_{m \in \Z} \Gamma (\surface, \shM(m)) =\Gamma(U, \pi^*\shM) \, .\]
This is a graded $R$-module, bounded from below, whose sheafification restricted to $U$ is $M|_U = \pi^*\shM $
by \cite[Lemma II 5.14]{hartshorne} (or \cite[Proposition 5.1.2]{ega2}). In particular,
the restriction of $M$ to $U$ is invertible and has rank one.
We have
$H^1(U, \pi^*\shM ) = \bigoplus_{m \in \Z} H^1(\surface , \shM(m))$ and more generally
\[ H^1(U,  \pi^* (F^{e*} \shM ))
= \bigoplus_{m \in \Z} H^1(\surface , \shM^{q} (m)) \, , \] and the sum on the right is finite for each $e$.
For the Frobenius pull-backs we have
\[ (F^{e*}M)|_U = F^{e*}(M|_U) = F^{e*} (\pi^* \shM) = \pi^* (F^{e*} \shM ) = \pi^*(\shM^q) \, . \]
Hence
\[ \dim_K  H^1(U, F^{e*}  M  ) = \sum_{m \in \Z}  \dim_K H^1(\surface , \shM^q (m)) \]
and divided by $q^3$ these numbers have an irrational limit. 
Finally, we have $H^1(U,F^{e*}M) \cong  H^2_\m (F^{e*}M)$ by the exact sequence relating local and global cohomology.
The independence of $p$ follows from the construction.
\end{proof}

\begin{cor}
\label{module1irrational}
There exists a three-dimensional hypersurface domain $R=K[X,Y,Z,W]/(F)$
where $F$ is homogeneous of degree four and $K$
has positive characteristic $p \gg 0 $,
a finitely generated $R$-module $Q$ and a submodule $N \subseteq Q$ with $Q/N$ of finite length
such that the limit
\[  \lim_{e \rightarrow \infty}    \frac{ \lg   \left(  H^0 (U, F^{e*} Q  )/ N^{[q]}  \right) }{p^{3e} }\]
is an irrational number.
\end{cor}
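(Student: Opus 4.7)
The plan is to reduce the desired irrationality to the one already established in Corollary \ref{module2irrational}, by constructing a module $Q$ whose first local cohomology after Frobenius pull-back is naturally isomorphic to $H^2_\m(F^{e*}M)$. I keep $R$ and the invertible sheaf $\shM$ on $\surface$ from Corollary \ref{module2irrational} and set $M := \Gamma_*(\shM)$. By Lemma \ref{ampleeffectivesublattice}, the divisor class $aH - D$ lies in the ample cone for every integer $a \geq 2$, so $\O_\surface(a) \otimes \shM^{\vee}$ is ample and hence admits a nonzero global section; any such section induces an inclusion $\shM \hookrightarrow \O_\surface(a)$, which after applying $\Gamma_*$ yields an injection of graded $R$-modules $M \hookrightarrow R(a)$. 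Define $Q := R(a)/M$ and take $N := Q$, so that $Q/N = 0$ is (trivially) of finite length.

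With $N = Q$, the image of $F^{e*}N$ in $F^{e*}Q$ is all of $F^{e*}Q$, so the four-term exact sequence of Lemma \ref{relatingsequence} has both of its first two terms equal to zero and collapses to an isomorphism
\[
  H^0(U, F^{e*}Q)/N^{[q]} \;\cong\; H^1_\m(F^{e*}Q).
\]
To analyze the right-hand side, I apply the right-exact functor $F^{e*}$ to $0 \to M \to R(a) \to Q \to 0$ and denote by $T^e$ the kernel of $F^{e*}M \to F^{e*}R(a)$. Because $R$ has an isolated singularity, $U = \Spec R \setminus \{\m\}$ is regular, Frobenius is flat on $U$ by Kunz's theorem, and the restriction of the original sequence to $U$ remains short exact after $F^{e*}$; consequently $T^e$ is $\m$-supported. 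Writing $\bar M^e := F^{e*}M/T^e$, one obtains a genuine short exact sequence $0 \to \bar M^e \to F^{e*}R(a) \to F^{e*}Q \to 0$ of $R$-modules. Since $R$ is Cohen-Macaulay of dimension three and $F^{e*}R(a) \cong R(aq)$, its local cohomology vanishes for $i \leq 2$; the long exact sequence of local cohomology therefore yields $H^1_\m(F^{e*}Q) \cong H^2_\m(\bar M^e)$, and as $T^e$ is $\m$-torsion it is invisible to $H^i_\m$ for $i \geq 1$, so $H^2_\m(\bar M^e) \cong H^2_\m(F^{e*}M)$.

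Combining the two identifications, $\lg(H^0(U, F^{e*}Q)/N^{[q]}) = \lg(H^2_\m(F^{e*}M))$ for every $e$, and Corollary \ref{module2irrational} supplies the irrational limit on the right. The only genuinely delicate point is propagating the identification of local cohomologies through the non-flatness of Frobenius on $R$, which could a priori introduce spurious terms; this is handled by the isolated singularity hypothesis, which confines the Frobenius kernel $T^e$ to the $\m$-supported part and hence leaves $H^i_\m$ for $i \geq 1$ unaffected.
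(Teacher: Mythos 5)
Your proof is correct, and it proceeds by a genuinely different construction than the paper's.

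The paper resolves the dual sheaf $\shM^\vee$ by a sum of twists of $\O_\surface$, dualizes to get a three-term exact sequence $0 \to \shM \to \bigoplus_j \O_\surface(-\alpha_j) \to \shQ \to 0$, sets $Q=\Gamma_*(\shQ)$, takes $N$ to be the image of $\bigoplus_j R(-\alpha_j)$ inside $Q$ (a proper submodule with $Q/N$ of finite length), and identifies the quotient with $H^1(U,F^{e*}M)=H^2_\m(F^{e*}M)$ by running the long exact cohomology sequence entirely on $U$, using $H^1(U,\O_U(-\alpha_j q))=0$ from Cohen-Macaulayness. You instead use a single section of the ample sheaf $\O_\surface(a)\otimes\shM^\vee$ (with $a\ge 2$, which works because $(aH-D)^2=4(a^2-a-1)>0$ and Lemma \ref{ampleeffectivesublattice} identifies the ample and positive cones in the $H$-$D$-plane, then Riemann-Roch on a $K3$ gives a nonzero section) to produce an injection $M\hookrightarrow R(a)$, set $Q=R(a)/M$, and take the degenerate choice $N=Q$. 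Lemma \ref{relatingsequence} in its degenerate form (which the paper notes in the remark after it, but does not exploit in this proof) gives $H^0(U,F^{e*}Q)/N^{[q]}\cong H^1_\m(F^{e*}Q)$, and then the local-cohomology long exact sequences for $0\to T^e\to F^{e*}M\to \bar M^e\to 0$ and $0\to\bar M^e\to F^{e*}R(a)\to F^{e*}Q\to 0$ together with the $\m$-support of the Tor-term $T^e$ (correctly deduced from Kunz's flatness over the regular punctured spectrum) yield $H^1_\m(F^{e*}Q)\cong H^2_\m(\bar M^e)\cong H^2_\m(F^{e*}M)$. Both routes are sound; yours is slightly more economical in that it needs only one twist $\O_\surface(a)$ rather than a full surjection onto $\shQ$, at the cost of making the admittedly legitimate but degenerate choice $N=Q$ (so $Q/N=0$) and of having to control the Frobenius Tor-term, which the paper's route avoids by staying on $U$ throughout.
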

\begin{proof}
We start again with $\shM$ from Corollary \ref{oguisoirrational} and let
\[ 0 \longrightarrow \shM \longrightarrow \bigoplus_{j = 0}^s \O_\surface (-\alpha_j) \longrightarrow \shQ \longrightarrow 0 \]
be exact on $\surface$
(which we get by resolving the dual of $\shM$).
Let $Q= \bigoplus_{m \in \Z} \Gamma(\surface, \shQ(m))$.
Note that $Q$ is not the quotient of $\bigoplus_{j=1}^s R(-\alpha_j)$ modulo
$M = \bigoplus_{m \in \Z} \Gamma (\surface, \shM(m))$; however, we have a complex
\[0 \longrightarrow M \longrightarrow \bigoplus_{j=1}^s R(-\alpha_j) \longrightarrow Q \longrightarrow 0\]
whose restriction to $U$ is exact. The Frobenius pull-back of this complex is again a complex, and its restriction to $U$
is exact. Let $N$ denote the image inside $Q$.
Then we have on $U$ a long exact graded cohomology sequence 
\[ \bigoplus_{j=1}^s R(- \alpha_j q) = H^0(U,  \bigoplus_{j=1}^s  \O_U(-  \alpha_j q)  )
\longrightarrow     H^0(U, F^{e*} Q)  \]
\[    \longrightarrow      H^1(U, F^{e*}  M   )      \longrightarrow 0= H^1(U, \bigoplus_{j=1}^s  \O_U(-   \alpha_j q) ) \, . \]
Therefore
\[     H^0(U, F^{e*} Q)    / \im  (  N^{[q]}     )         \cong H^1(U, F^{e*} M )= H^2_\m ( F^{e*} M)  \, \]
and this gives the irrational limit by Corollary \ref{module2irrational}.
\end{proof}

\begin{cor}
There exists a three-dimensional hypersurface domain $R=K[X,Y,Z,W]/(F)$
where $F$ is homogeneous of degree four and $K$
has positive characteristic $p \gg 0$,
a finitely generated $R$-module $Q$ and a submodule $N \subseteq Q$
with $Q/N$ of finite length and such that at least one of the following three limits
\[    \lim_{e \rightarrow \infty} \!   \frac{ \lg   \left( H^0_\m (  ( F^{e*} Q ) / H^0_\m ( N^{[q]})   \right) }{p^{3e} }, 
\lim_{e \rightarrow \infty} \!   \frac{ \lg   \left(   ( F^{e*} Q  )/N^{[q]}   \right) }{p^{3e} },
\lim_{e \rightarrow \infty}  \!  \frac{ \lg   \left(  H^1_\m ( F^{e*} Q  )  \right) }{p^{3e} }, \]
is an irrational number.
\end{cor}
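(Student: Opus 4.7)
The plan is to apply Lemma \ref{relatingsequence} to the pair $(N,Q)$ produced by Corollary \ref{module1irrational} and then read off the irrationality of at least one of the three listed limits from the alternating length relation of the resulting four-term exact sequence. I would take $R$, $Q$ and $N \subseteq Q$ exactly as constructed there; in particular $R$ is the homogeneous coordinate ring of a smooth determinantal quartic surface $\surface \subset {\mathbb P}^3$, so $\Spec R$ has an isolated singularity at $\m$, a point I shall need in order to keep the local cohomological limits honest.

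With this choice, Lemma \ref{relatingsequence} (applied to $M := Q$ and the given $N$) supplies, for every $e$, a four-term exact sequence $0 \to A_e \to B_e \to C_e \to D_e \to 0$ with
\[
A_e = H^0_\m(F^{e*}Q)/H^0_\m(N^{[q]}), \quad B_e = F^{e*}Q/N^{[q]},
\]
\[
C_e = H^0(U, F^{e*}Q)/\im\bigl(F^{e*}N \to H^0(U, F^{e*}Q)\bigr), \quad D_e = H^1_\m(F^{e*}Q).
\]
Taking lengths and dividing by $p^{3e}$ yields the identity
\[ \lg(A_e)/p^{3e} - \lg(B_e)/p^{3e} + \lg(C_e)/p^{3e} - \lg(D_e)/p^{3e} = 0, \]
and Corollary \ref{module1irrational} tells me that $\lg(C_e)/p^{3e} \to \alpha$ as $e \to \infty$ for some irrational $\alpha$.

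Next I would check that two of the three remaining limits exist, so that the third is forced to exist as well. The limit of $\lg(B_e)/p^{3e}$ is the Hilbert-Kunz multiplicity $\operatorname{HK}(N,Q) = \operatorname{HK}(Q/N)$, which exists by Monsky's theorem since $Q/N$ has finite length. Because $R$ has an isolated singularity, \cite[Corollary 3.7]{daosmirnov} guarantees the existence of $\operatorname{HK}^1(Q) = \lim_{e \to \infty} \lg(D_e)/p^{3e}$. The displayed length relation then forces $\lim_{e \to \infty} \lg(A_e)/p^{3e}$ to exist as well, and the four limits satisfy
\[ \lim_e \lg(A_e)/p^{3e} - \lim_e \lg(B_e)/p^{3e} + \alpha - \lim_e \lg(D_e)/p^{3e} = 0. \]
Since $\alpha$ is irrational while the sum of the other three, if they were all rational, would be rational, at least one of $\lim \lg(A_e)/p^{3e}$, $\lim \lg(B_e)/p^{3e}$, $\lim \lg(D_e)/p^{3e}$ must itself be irrational, which is the desired conclusion. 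I do not expect any substantive obstacle: the argument is a pigeonhole deduction from the exact sequence of Lemma \ref{relatingsequence} combined with the previously established irrationality and the Dao-Smirnov existence result, so the only point to verify carefully is that the isolated singularity hypothesis required by Dao-Smirnov is indeed available, which it is by construction of $R$.
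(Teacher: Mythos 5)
Your proposal is correct and follows essentially the same route as the paper, which gives the one-line justification that the corollary ``follows from Corollary \ref{module1irrational} and the short exact sequence of Lemma \ref{relatingsequence}.'' Your elaboration---invoking Monsky for the existence of $\operatorname{HK}(Q/N)$ and \cite[Corollary 3.7]{daosmirnov} (applicable since $R$ has an isolated singularity) for the existence of $\operatorname{HK}^1(Q)$, then reading off the existence of the fourth limit and the irrationality of at least one of the three remaining terms from the alternating length relation---usefully makes explicit the existence considerations that the paper leaves implicit.
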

\begin{proof}
This follows from Corollary \ref{module1irrational} and the short exact sequence of Lemma \ref{relatingsequence}.
\end{proof}

For the following corollary I am grateful to H. Dao,
who brought \cite[Theorem 2.9]{daolimiller} and \cite[Theorem 4.15]{daosmirnov} to my attention.

\begin{cor}
\label{module0irrational}
There exists a three-dimensional hypersurface domain
$R=K[X,Y,Z,W]/(F)$ where $F$ is homogeneous of degree four and where $K$
has positive characteristic $p \gg 0$ and an ideal $I \subset R$ 
such that the limit
(the zeroth local cohomological Hilbert-Kunz multiplicity)
\[  HK^0 (R/I) =  \lim_{e \rightarrow \infty}    \frac{ \lg   \left( H^0_\m (  F^{e*} (R/ I )   \right) }{p^{3e} }   
=    \lim_{e \rightarrow \infty}    \frac{ \lg   \left( H^0_\m (    R/ I^{[q]} )   \right) }{p^{3e} }        \]
is an irrational number. This number is independent of $p$. 
\end{cor}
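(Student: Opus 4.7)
The plan is to deduce this from Corollary \ref{module2irrational} via a short-exact-sequence argument in local cohomology combined with the Dao--Smirnov machinery for local-cohomological Hilbert--Kunz multiplicities.

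First, I realize the rank-one reflexive module $M$ of Corollary \ref{module2irrational} as a (non-principal, hence non-$\m$-primary) divisorial ideal $I \subseteq R$. This is possible because $R$ is a normal Cohen--Macaulay domain of dimension three and $M$ is reflexive of rank one, corresponding to a non-trivial class in $\Cl(R)$; clearing denominators in a fractional-ideal representative produces an honest ideal, and any grading shift is harmless for the forthcoming asymptotics.

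Second, I analyze the short exact sequence $0 \to I^{[q]} \to R \to R/I^{[q]} \to 0$. Since $R$ is Cohen--Macaulay of dimension three, $H^i_\m(R)=0$ for $i\le 2$, and the long exact sequence of local cohomology yields
\[
H^0_\m(R/I^{[q]}) \cong H^1_\m(I^{[q]}), \qquad H^1_\m(R/I^{[q]}) \cong H^2_\m(I^{[q]}).
\]
The canonical surjection $F^{e*}I \twoheadrightarrow I^{[q]}$ has kernel $T_e$ supported on $\{\m\}$, because Frobenius is flat on the regular locus $U=\Spec R \setminus\{\m\}$ and $R$ has isolated singularity; a module supported on $\m$ has $H^i_\m = 0$ for $i\ge 1$, so $H^i_\m(I^{[q]}) \cong H^i_\m(F^{e*}I) = H^i_\m(F^{e*}M)$ for $i \geq 1$. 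Consequently $HK^1(R/I) = HK^2(M)$, which is irrational by Corollary \ref{module2irrational}; but the target quantity $HK^0(R/I) = HK^1(M)$ is not yet seen to be irrational.

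Third, to transfer the irrationality from $HK^1$ into $HK^0$ of an ideal, I invoke the Dao--Smirnov results (\cite[Theorem 4.15]{daosmirnov} together with \cite[Theorem 2.9]{daolimiller}). Under the isolated-singularity hypothesis these provide both the existence of all relevant limits and Euler-characteristic-type relations for the alternating sum $\sum_i (-1)^i HK^i$ under quotienting by a generic regular element. Passing from $I$ to $(I,x)$ for a sufficiently generic linear form $x \in R_1$ (chosen to avoid the minimal primes of $I$, so that $x$ is a nonzerodivisor on $R/I$) drops the support dimension by one; the Dao--Smirnov reduction then identifies $HK^0(R/(I,x))$ with $HK^1(R/I) = HK^2(M)$ up to a rational correction coming from the finite-projective-dimension contribution of $x$. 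Characteristic-independence is inherited from Corollary \ref{module2irrational} through the family construction of Corollary \ref{oguisoirrational}.

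The principal obstacle will be executing the dimension-reduction step so that the irrational contribution lands squarely in $HK^0$ of the resulting ideal rather than being redistributed over several cohomological degrees; this is exactly what the Dao--Smirnov reduction formulas are designed to deliver, but one must check that the ``rational corrections'' really are rational in our setting (e.g., that they come from objects of finite projective dimension whose Hilbert--Kunz multiplicity is known to be rational).
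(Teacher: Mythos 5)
Your first step correctly identifies the obstruction, but then you leave it unresolved. Taking $I$ to be the divisorial ideal representing $M$ and passing through $0 \to I^{[q]} \to R \to R/I^{[q]} \to 0$ lands you, as you note, at $HK^0(R/I) = HK^1(M)$, i.e.\ at $\lim \lg H^1_\m(F^{e*}M)/p^{3e}$. This quantity is the asymptotic of the cokernel of $F^{e*}\Gamma_*(\shM) \to \Gamma_*(\shM^{p^e})$, not the asymptotic of $\bigoplus_m h^1(\surface, \shM^{p^e}(m))$, and Corollary~\ref{module2irrational} says nothing about it. So the first step does not reach irrationality.

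Your proposed fix --- adjoining a generic linear form $x$ to $I$ and invoking a ``Dao--Smirnov reduction'' relating $HK^0(R/(I,x))$ to $HK^1(R/I)$ up to rational corrections --- is not an argument in the paper, is not stated in \cite{daosmirnov}, and is not carried out here; in particular $F^{e*}(R/(I,x)) = R/(I^{[q]},x^q)$, so the multiplication-by-$x^q$ exact sequences degrade with $e$ in a way you do not control. The paper's actual mechanism is quite different: it does not use the divisorial ideal $J$ as the test ideal, but instead writes $J = (a:b)$ (this is what \cite[Theorem~2.9]{daolimiller} provides) and sets $I = (a,b)$. The crucial identities are $J^{(q)} = (a^q:b^q)$ and the exact sequence
\[
0 \longrightarrow R/(a^q:b^q) \stackrel{\cdot b^q}{\longrightarrow} R/(a^q) \longrightarrow R/(a^q,b^q) \longrightarrow 0,
\]
from which $H^0_\m(R/I^{[q]}) \cong H^1_\m(R/J^{(q)}) \cong H^2_\m(J^{(q)})$, using $\depth R/(a^q) = 2$ and the Cohen--Macaulayness of $R$. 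Since $J$ is invertible on the punctured spectrum $U$, one has $H^2_\m(J^{(q)}) = H^1(U, F^{e*}J) = H^2_\m(F^{e*}J)$, which is precisely what Corollary~\ref{module2irrational} controls. In short, the paper shifts the cohomological degree from $0$ up to $2$ via linkage, rather than trying to transfer irrationality sideways between cohomological degrees; your route as written has a genuine unfilled gap at exactly the place you flagged.
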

\begin{proof}
We start again with the invertible sheaf $\shM$ on $\surface$ as in Corollary \ref{oguisoirrational}
and we look at a twist such that $\shM(\ell) \subseteq \O_\surface$ becomes an ideal sheaf on $\surface$.
Then the shifted version $J=M(\ell)$ is an ideal of $R$ which is isomorphic to $ M$,
in particular it is an invertible ideal sheaf on $U$.
Note that restricted to $U$, the Frobenius powers, the ordinary powers and the symbolic powers of this ideal are the same.
Under the given assumption we can write  $J=(a:b)$ for some nonzero elements $a,b \in R$, see \cite[Theorem 2.9]{daolimiller}.
We have a short exact sequence of $R$-modules
\[0 \longrightarrow R/(a^q:b^q) \longrightarrow R/(a^q) \longrightarrow R/(a^q,b^q)  \longrightarrow 0 \, ,\]
where the map on the left is given by $1 \mapsto b^q$. For the $q$th symbolic power we have $J^{(q)} =(a^q:b^q) $. We set $I=(a,b)$.
From $H^0_\m (R/(a^q)) =H^1_\m (R/(a^q)) =0$, which holds because the depth of $R/(a^q)$ is two, we infer
$H^0_\m (R/(I^{[q]} ))  \cong H^1_\m(R/ J^{(q)} ) $
and from
$0 \rightarrow J^{(q)} \rightarrow R \rightarrow R/ J^{(q)} \rightarrow 0 $
we conclude $H^1_\m(R/ J^{(q)}) \cong H^2_\m (J^{(q)})$, since $R$ is Cohen-Macaulay of dimension three.
Finally we get
\[H^2_\m(J^{(q)})  =H^1(U, J^{(q)}) = H^1 (U, F^{e*}  J )  =H^2_\m ( F^{e*}   J)  \, , \]
where $U$ denotes the punctured spectrum and where the equation in the middle holds since $U$ is regular and $J$ is invertible on $U$.
So we get altogether $H^0_\m (  F^{e*} ( R/ I )  )  \cong H^2_\m( F^{e*}  J) $
and the limit behavior follows from Corollary \ref{module2irrational}.
\end{proof}

In the final result of this section we get away from local cohomological versions of Hilbert-Kunz multiplicities
and return to the artinian case with the help of  
\cite[Theorem 5.2]{daosmirnov}.

\begin{thm}
\label{moduleartinianirrational}
There exists a three-dimensional hypersurface domain $R=K[X,Y,Z,W]/(F)$ where $F$ is homogeneous of degree four and where $K$
has positive characteristic $p \gg 0$ and a finitely generated artinian $R$-module $M$
such that the limit
(the Hilbert-Kunz multiplicity)
\[  HK (M) =  \lim_{e \rightarrow \infty}    \frac{ \lg   \left(  F^{e*} M  \right) }{p^{3e} }  \]
is an irrational number. 
\end{thm}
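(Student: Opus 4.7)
The plan is to leverage Corollary \ref{module0irrational}, which already supplies a three-dimensional degree-four hypersurface domain $R = K[X,Y,Z,W]/(F)$ in characteristic $p \gg 0$ (coming from a determinantal quartic of Section \ref{determinantalquartic}, hence with isolated singularity at $\m$) together with an ideal $I \subset R$ whose zeroth local-cohomological Hilbert-Kunz multiplicity $HK^0(R/I)$ is irrational. The task reduces to trading this local-cohomological irrationality for ordinary Hilbert-Kunz irrationality of a finite-length module, and for this I would invoke \cite[Theorem 5.2]{daosmirnov}, exactly as the lead-in to the theorem suggests.

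The natural candidate for the artinian module is
\[ M \; := \; H^0_\m(R/I), \]
which is finitely generated and supported at $\m$, hence of finite length, because $R/I$ is Noetherian and the $\m$-torsion part is annihilated by some power of $\m$. What one needs to compare is
\[ HK(M) \;=\; \lim_{e\to\infty} \frac{\lg(F^{e*}M)}{p^{3e}} \qquad\text{versus}\qquad HK^0(R/I) \;=\; \lim_{e\to\infty} \frac{\lg(H^0_\m(R/I^{[q]}))}{p^{3e}}. \]
The canonical map $F^{e*}(H^0_\m(R/I)) \to H^0_\m(F^{e*}(R/I)) = H^0_\m(R/I^{[q]})$ is not an isomorphism in general (Frobenius does not commute with saturation), but the content of \cite[Theorem 5.2]{daosmirnov} is precisely that under the hypotheses that our $R$ satisfies (isolated singularity, existence of the relevant cohomological limits via \cite[Corollary 3.7]{daosmirnov}) the kernel and cokernel of this comparison map have length $o(p^{3e})$. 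Consequently $HK(M) = HK^0(R/I)$, and the latter is irrational by Corollary \ref{module0irrational}.

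The main (and essentially only) obstacle is citational bookkeeping: one must verify that the particular ideal $I = (a,b)$ produced in the proof of Corollary \ref{module0irrational}, together with the hypersurface $R$ from Section \ref{determinantalquartic}, fits the hypotheses of \cite[Theorem 5.2]{daosmirnov}. Since the construction delivers an isolated singularity (because $\Proj R$ is smooth by the choice of a determinantal quartic with Picard rank two) and $R$ is a three-dimensional Cohen-Macaulay standard-graded domain, all the regularity and length-finiteness requirements of the Dao-Smirnov theorem are met. Beyond this verification no further analytic input is needed: the entire irrationality has already been extracted, in Section \ref{determinantalquartic} and Corollary \ref{oguisoirrational}, from the Cayley-Oguiso automorphism and the quadratic form $4(x^2+xy-y^2)$ on the two-dimensional sublattice of $\nesenum(\surface)$.
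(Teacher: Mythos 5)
The proposal has a genuine gap, and in fact the candidate module you propose is the zero module.

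You set $M := H^0_\m(R/I)$. But the paper's own proof of this theorem spends its first half (via Lemma \ref{oguisoh1} and the chain of isomorphisms from Corollary \ref{module0irrational} at $q=1$) establishing precisely that
\[ H^0_\m(R/I) \;=\; H^1_\m(R/J) \;=\; H^2_\m(J) \;=\; H^1(U,J) \;=\; 0, \]
because the invertible sheaf $\shM$ was chosen (Lemma \ref{oguisoh1}) so that $H^1(\surface,\shM(m))=0$ for all $m$. So your $M$ is $0$, $HK(M)=0$, and nothing has been produced. The vanishing $H^0_\m(R/I)=0$ is not an accident of the construction; it is exactly what makes $\depth(R/I)\geq 1$, which is a \emph{hypothesis} of \cite[Theorem 5.2]{daosmirnov} and is the reason Lemma \ref{oguisoh1} exists in the paper at all.

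The second problem is that you misstate the content of \cite[Theorem 5.2]{daosmirnov}. It does not assert that the comparison map $F^{e*}(H^0_\m(N)) \to H^0_\m(F^{e*}N)$ has kernel and cokernel of length $o(p^{3e})$; indeed that comparison map is not the object it addresses. What it says is: for a hypersurface ring $R$ and a finitely generated $R$-module $N$ with $\depth N \geq 1$, there exist two finite-length $R$-modules $N_1$, $N_2$ with
\[ \operatorname{HKF}^0(N,e) = \frac{\operatorname{HKF}(N_1,e) - \operatorname{HKF}(N_2,e)}{2}. \]
Dividing by $p^{3e}$ and passing to the limit gives $\operatorname{HK}^0(R/I) = \tfrac{1}{2}\bigl(\operatorname{HK}(N_1) - \operatorname{HK}(N_2)\bigr)$. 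Since the left side is irrational by Corollary \ref{module0irrational}, at least one of $\operatorname{HK}(N_1)$, $\operatorname{HK}(N_2)$ must be irrational — and it is \emph{one of these modules} $N_1, N_2$ (not $H^0_\m(R/I)$) that serves as the artinian module $M$ in the theorem. Your proof is missing the depth verification (Lemma \ref{oguisoh1}) and replaces the actual two-module decomposition of Dao--Smirnov with a claim that is both unsourced and false in general (Frobenius does not commute with saturation, and there is no reason the error should be $o(p^{3e})$ without further input).
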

\begin{proof}
By Corollary \ref{module0irrational} we know that there is an ideal $I =(a,b) $ such that
$ \lim_{e \rightarrow \infty} \frac{ \lg \left( H^0_\m (F^{e*} (R/I) ) \right)}{p^{e3}} $ is irrational. Moreover, $J=(a:b)$
is the graded ideal coming from a shift of the invertible sheaf
$\shM$ on a smooth determinantal quartic surface $S$ as described in Section \ref{determinantalquartic}. 
By Lemma \ref{oguisoh1}, this $\shM$ which we have picked to define $J$ and all its shifts $\shM(m)$ do not have first cohomology
on $\surface$.
From this we deduce with the short exact sequences from Corollary \ref{module0irrational} (for $q=1$)
\[H^0_\m( R/I)
= H^1_\m (R/J) = H^2_\m (J )  = H^1  (U, J) =0 \, .   \]
Therefore $R/I$ has depth at least one and we can apply \cite[Theorem 5.2]{daosmirnov} which states that for a hypersurface ring $R$
and a finitely
generated $R$-module $N$ of depth $\geq 1$ there exist two $R$-modules $N_1$ and $N_2$ of finite length such that
\[ \operatorname{HKF}^0 (N, e )   =  \frac{ \operatorname{HKF} (N_1 , e ) -  \operatorname{HKF} (N_2 , e ) }{2}   \, .   \]
The same equality holds by dividing through $p^{3e}$ and going to the limit, i.e. for the Hilbert-Kunz multiplicities, so
\[  \operatorname{HK}^0 (N)   =  \frac{ \operatorname{HK} (N_1) -  \operatorname{HK} (N_2 ) }{2}   \, .   \]
Since we have an irrational number on the left for $N=R/I$,
at least one of the numbers on the right must be irrational as well.
\end{proof}

\section{Reductions to the case of a maximal ideal}
\label{reductions}

In this section we show that there also exists a local noetherian ring with irrational Hilbert-Kunz multiplicity.
This is achieved by two algebraic reductions which are independent of previous results. First we show that if
a local noetherian ring $R$ of positive characteristic has an $R$-module of finite length with irrational Hilbert-Kunz multiplicity,
then there exists also a primary ideal in some polynomial ring over $R$ with irrational Hilbert-Kunz multiplicity. Then we show that whenever we have
a local ring with a primary ideal with irrational Hilbert-Kunz multiplicity,
then we can also construct a local ring where the maximal ideal has
irrational Hilbert-Kunz multiplicity.

\begin{thm}
\label{reductionideal}
Let $(R,\m)$ be a local noetherian ring containing a field of positive characteristic $p$.
Suppose that there exists an $R$-module $M$ of finite length with irrational
Hilbert-Kunz multiplicity.
Then there exists an ideal $I$ in some polynomial ring $R[T_1, \ldots, T_m]$,
primary to $\m +(T_1, \ldots, T_m)$, such that its Hilbert-Kunz multiplicity is irrational.
\end{thm}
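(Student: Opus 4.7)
My plan is to transfer the irrationality from $M$ to an ideal by encoding a presentation of $M$ into a polynomial extension of $R$. First I would fix a presentation $R^s \xrightarrow{A} R^n \to M \to 0$ with $A=(a_{ij})$. Since $M$ has finite length, $\m^N M = 0$ for some $N$, so $\Ann_R(M) \supseteq \m^N$ is $\m$-primary, and standard prime avoidance produces a full system of parameters $x_1,\ldots,x_d$ of $R$ lying in $\Ann_R(M)$. In $S = R[T_1,\ldots,T_n]$ with maximal ideal $\mathfrak{M} = \m S + (T_1,\ldots,T_n)$, the ideal of interest will be
\[ I = (x_1,\ldots,x_d) + (T_iT_j \,:\, 1 \le i \le j \le n) + \bigl(\textstyle\sum_i a_{ij}T_i \,:\, 1 \le j \le s\bigr). \]
The identification $S/I \cong R/(x_1,\ldots,x_d) \oplus M$ as $R$-modules (with the $T_i$'s acting as a square-zero copy of a generating system of $M$) is direct, so $S/I$ has finite length and $I$ is $\mathfrak{M}$-primary.

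The central computation is to determine $\lg(S/I^{[q]})$ for $q=p^e$. In characteristic $p$ we have $(T_iT_j)^q = T_i^q T_j^q$ and $(\sum_i a_{ij}T_i)^q = \sum_i a_{ij}^q T_i^q$, so
\[ I^{[q]} = (x_k^q,\; T_i^q T_j^q,\; \textstyle\sum_i a_{ij}^q T_i^q). \]
I would write each monomial of $S$ uniquely as $T^\alpha = T^\beta(T^q)^\gamma$ with $0 \le \beta_i < q$. The generators $T_i^qT_j^q$ kill every slot with $|\gamma| \ge 2$ or some $\gamma_i \ge 2$, leaving only $\gamma=0$ and $\gamma=e_k$; the $x_k^q$ reduce each coefficient to $R/x^{[q]}$. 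For fixed $\beta$, multiplying $\sum_i a_{ij}^q T_i^q$ by $T^\beta$ imposes the $A^{[q]}$-column relations on the $n$ slots $T^{\beta+qe_k}$, so their joint contribution is $(R/x^{[q]})^n/\im(A^{[q]}) = F^{e*}M/x^{[q]}F^{e*}M$. The key observation is that $x^{[q]}$ annihilates $F^{e*}M$: from $x_i \in \Ann_R(M)$, i.e.\ $x_i e_j \in \im(A) \subset R^n$, raising the relevant linear combination to the $q$-th power (using $(\sum_k a_{jk}c_k)^q = \sum_k a_{jk}^q c_k^q$ in characteristic $p$) gives $x_i^q e_j \in \im(A^{[q]})$. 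Hence $F^{e*}M/x^{[q]}F^{e*}M = F^{e*}M$, and summing over the $q^n$ residues $\beta$ yields
\[ \lg(S/I^{[q]}) = q^n\bigl(\lg(R/x^{[q]}) + \lg(F^{e*}M)\bigr). \]

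Since $\dim S_\mathfrak{M} = d+n$, dividing by $q^{d+n}$ and letting $e \to \infty$ will give
\[ \hkmodulemult(I) = \hkmodulemult((x_1,\ldots,x_d)) + \hkmodulemult(M) = e(x_1,\ldots,x_d) + \hkmodulemult(M), \]
the last equality being Lech's limit formula $\lg(R/(x_1^{n_1},\ldots,x_d^{n_d}))/(n_1 \cdots n_d) \to e(x_1,\ldots,x_d)$, valid in any Noetherian local ring. Since $e(x_1,\ldots,x_d) \in \Z_{>0}$ and $\hkmodulemult(M)$ is irrational by hypothesis, $\hkmodulemult(I)$ is irrational as well. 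The main obstacle is the careful monomial-basis bookkeeping of $S/I^{[q]}$: its clean closed form hinges on the square-zero relation $T_iT_j=0$ isolating exactly the $\gamma \in \{0,e_k\}$ slots, on the Frobenius-linearization turning $\sum_i a_{ij}T_i \in I$ into a clean $A^{[q]}$-type relation in $I^{[q]}$, and on Lech's identity that the Hilbert-Kunz multiplicity of a parameter ideal equals its ordinary multiplicity.
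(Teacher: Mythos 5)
Your proof is correct, and the core of the argument is the same as the paper's: encode a presentation $R^s \xrightarrow{A} R^n \to M \to 0$ into an ideal $I$ of $S=R[T_1,\ldots,T_n]$ via the square-zero construction, use the $(\Z/q\Z)^n$-organization of monomials (the $T_iT_j$ relations isolate the $\gamma\in\{0,e_k\}$ slots), observe that the $q$-th power of the annihilator kills $F^{e*}M$ (because $xe_j\in\im A$ Frobenius-linearizes to $x^q e_j\in\im A^{[q]}$), and obtain $\operatorname{HK}^S(S/I)=\operatorname{HK}^R(\text{base ideal})+\operatorname{HK}^R(M)$. The one substantive point where you depart from the paper is the choice of base ideal: the paper takes the full annihilator $\mathfrak{a}=\Ann_R M$ and disposes of the possibility $\operatorname{HK}(R/\mathfrak{a})$ irrational via a ``without loss of generality'' at the outset, whereas you replace $\mathfrak{a}$ by a system of parameters $(x_1,\ldots,x_d)\subseteq\mathfrak{a}$ and invoke Lech's limit formula $\lim_q \operatorname{length}\bigl(R/(x_1^q,\ldots,x_d^q)\bigr)/q^d = e(x_1,\ldots,x_d)\in\Z_{>0}$ to guarantee the extra summand is a positive integer. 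This removes the case distinction at the cost of introducing Lech's theorem; the paper's version is self-contained but has the two-branch structure. Both are correct, and your variant is arguably a little tidier. One small caution in the write-up: when you argue $x_i^q e_j\in\im(A^{[q]})$, the identity $(\sum_k a_{jk}c_k)^q=\sum_k a_{jk}^q c_k^q$ holds because Frobenius is additive term by term, i.e.\ $(\sum u_k)^q=\sum u_k^q$ with $u_k=a_{jk}c_k$; it is worth phrasing it that way since, stated as a distributive law for $(\cdot)^q$, it reads ambiguously.
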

\begin{proof}
Let ${\mathfrak a} = \operatorname{Ann}_R M$ be the annihilator of $M$, which is an $\m$-primary ideal in $R$.
If its Hilbert-Kunz multiplicity is irrational, then we are already done, so assume it is a rational number.
Let
\[  R^n \stackrel{A}{\longrightarrow} R^m \longrightarrow M \longrightarrow 0 \]
be a presentation of $M$, where $A$ is an $m \times n$-matrix. Let 
\[c_j= \begin{pmatrix} f_{1j} \\ \vdots \\ f_{mj} \end{pmatrix} ,\, j=1, \ldots , n,\] 
be the columns of $A$ (each with $m$ entries). Note that the elements $he_i$, $1 \leq i \leq m$, $h \in {\mathfrak a}$
($e_i$ being the standard basis of $R^m$),
lie in the image of $A$, i.e. there exist linear combinations $he_i = \sum_{j=1}^n r_j c_j$ of the columns $c_j$.
Raising this system of equations to the $q$th power, we see that
$h^q e_i $ is a linear combination of the $q$th power of the columns (taken componentwise).
In particular ${\mathfrak a}^{[q]} $ annihilates $F^{e*}M$.

Consider in the polynomial ring $S=R[T_1, \ldots, T_m]$ the ideal
\[ I = {\mathfrak a} + ( f_{1j}T_1 + \cdots +f_{mj}T_m ,\, j=1, \ldots , n)   +( T_iT_j,\, 1 \leq i,j \leq m) \, .\]
So each column contributes with a linear polynomial.
This ideal is homogeneous in the standard grading (we will work soon with different gradings as well) and clearly
$\m +(T_1, \ldots , T_m)$-primary.
Its $q$th Frobenius power is
\[ I^{[q]} = {\mathfrak a}^{[q]} + ( f_{1j}^qT^q_1 + \cdots +f_{mj}^qT^q_m ,\, j=1, \ldots , n)  +( T^q_iT^q_j,\, 1 \leq i,j \leq m) \, .\]
We have to compute the lengths of $S/I^{[q]}$. For this we work with the $(\Z/(q))^m$-grading (over $R$)
of the polynomial ring $S$, i.e.
each $T_i$ gets the degree $e_i \in  (\Z/(q) )^m$
($e_i$ being now the standard basis in this group).
Then the ideal $I^{[q]}$ is homogeneous in this grading and in fact generated by
elements of degree $0$. Therefore the residue class ring $S/I^{[q]}$ is  $(\Z/(q) )^m$-graded as well.
Hence we can compute its length by computing the lengths of its graded
pieces (which are $R$-modules). These pieces are indexed by $\mu \in  (\Z/(q) )^m$ (where $0 \leq \mu_i < q$ for each $i$)
and its monomial representatives in $S_\mu$ are
\[ (1)\,\,\, T^\mu,\, \,\,
(2) \,\,\, T^\mu T_1^q, \ldots , T^\mu T_m^q,\,  \,\,
(3)\,\,\,  T^\mu T_i^qT_j^q ,\, 1\leq i,j \leq m,\ \rm{etc.} \,  , \]
so these monomials form an $R$-module generating system of $(S/I^{[q]} )_\mu$.
We have to understand what happens to these monomials 
modulo $I^{[q]}$. 
For type (1) we only have to consider the ideal ${\mathfrak a}^{[q]}$. For type (2), note that by the remark made above, each
${\mathfrak a}^{[q]} T^\mu T_i^q$ is contained in the $R$-module generated by
$T^\mu(\sum_{i=1}^m f^q_{ij}T_i^q) $, $j=1, \ldots ,n $,
so only these $q$-powers of the linear polynomials are relevant,
and type (3) is completely killed by $I^{[q]}$.
Therefore we get an $R$-module isomorphism
\[ (S/I^{[q]})_\mu  \cong 
R/ {\mathfrak a}^{[q]}   \oplus
\left(   RT^\mu T_1^q  \oplus \cdots \oplus RT^\mu T_m^q   \right)/(  T^\mu( \sum_{i=1}^m f^q_{ij} T_i^q ),\, j=1,\ldots, n)    \]
and this $R$-module is isomorphic to
\[ R/{\mathfrak a}^{[q]}  \oplus F^{e*}M \,. \]
Hence its length is $\operatorname{HKF}^R (R/{\mathfrak a}, e) +  \operatorname{HKF}^R (M, e)    $.
Since there exist $q^m$ graded pieces, we get
\[\operatorname{HKF}^S (S/I,e)  = q^m (  \operatorname{HKF}^R (R/{\mathfrak a}, e) +  \operatorname{HKF}^R (M, e)    )  \]
and therefore
\[\frac{ \operatorname{HKF}^S (S/I,e)}{q^{\dim S} }
= \frac{  \operatorname{HKF}^R (R/{\mathfrak a}, e) +  \operatorname{HKF}^R (M, e)    ) }{q^{\dim R} } \, .\]
Taking the limit for $e \rightarrow \infty$
we get
\[  \operatorname{HK}^S (S/I)  =   \operatorname{HK}^R (R/{\mathfrak a}) +  \operatorname{HK}^R (M)   \, .     \]
Since $ \operatorname{HK}^R (R/{\mathfrak a})$ is assumed to be rational and $\operatorname{HK}^R (M)$ is  irrational, also
$ \operatorname{HK}^S (S/I) $ is irrational.
\end{proof}

\begin{thm}
\label{reductionmaximal}
Let $(R, \m)$ be a local noetherian normal excellent domain containing a field $K$
of positive characteristic and let $I$ be an $\m$-primary ideal.
Then there
exists a local noetherian domain
$(S,\n)$ such that the Hilbert-Kunz multiplicity of $S$ is the Hilbert-Kunz multiplicity of $I$
up to a multiple of a rational number.
\end{thm}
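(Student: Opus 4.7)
The plan is to realize the primary ideal $I$ as the maximal ideal of a suitable subring $S \subseteq R$. By completing (which preserves both normality and the Hilbert-Kunz multiplicity of the primary ideal $I$), I may assume $R$ is complete. The Cohen structure theorem then provides a coefficient field $k \cong R/\m$ inside $R$. Writing $I = (f_1, \ldots, f_n)$, I set
\[ S \, := \, k + I \, \subseteq \, R, \]
the subring of those elements whose image in $R/I$ lies in $k$. Any $c + i \in S$ with $c \in k^\times$, $i \in I$ is a unit in $S$ via the convergent series $c^{-1} \sum_{n \geq 0}(-c^{-1} i)^n$, which lies in $k+I = S$ because $R$ is complete and $I$ is closed in the $\m$-adic topology; hence $S$ is local with maximal ideal $\m_S = I$ and residue field $k$.

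Choosing lifts $r_0 = 1, r_1, \ldots, r_{\mu - 1}$ in $R$ of a $k$-basis of $R/I$ (with $\mu = \lg(R/I)$ finite by $\m$-primariness of $I$), any $r \in R$ decomposes as $\sum_j c_j r_j + \iota$ with $c_j \in k$ and $\iota \in I \subseteq S$, exhibiting $R = \sum_j S r_j$ as a finite $S$-module. Eakin's theorem then yields that $S$ is noetherian; $S$ is a domain as a subring of $R$, and integrality of $R$ over $S$ gives $\dim S = \dim R = d$.

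The main step is to verify that $\operatorname{HK}(S) = \operatorname{HK}(I)$, which gives the theorem with rational factor one. Because $I \subseteq S$, the $R$-ideal $I^{[q]}$ is contained in $S$ and satisfies $I^{[q]} = I_S^{[q]} \cdot R$, where $I_S^{[q]} := (f_1^q, \ldots, f_n^q) S$. The $S$-module $R/S$ is annihilated by $I$ (since $I R \subseteq I \subseteq S$) and, by the basis decomposition, is isomorphic to $k^{\mu - 1}$. Tensoring $0 \to S \to R \to R/S \to 0$ with $S/I_S^{[q]}$ over $S$ produces the four-term exact sequence
\[ \Tor_1^S(R/S, S/I_S^{[q]}) \lra S/I_S^{[q]} \lra R/I^{[q]} \lra R/S \lra 0, \]
which rearranges to
\[ \lg_S (S/I_S^{[q]}) \, = \, \lg_R (R/I^{[q]}) \, - \, (\mu - 1) \, + \, \lg_S \bigl( \im (\Tor_1^S(R/S, S/I_S^{[q]}) \to S/I_S^{[q]} ) \bigr) . \]
A standard computation using $0 \to I_S^{[q]} \to S \to S/I_S^{[q]} \to 0$ identifies $\Tor_1^S(k, S/I_S^{[q]}) \cong I_S^{[q]}/\m_S I_S^{[q]}$, of $k$-dimension equal to the minimal number of generators $\mu_S(I_S^{[q]}) \leq n$. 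Hence the $\Tor$ term is bounded by $n(\mu-1)$ uniformly in $q$, and dividing by $q^d$ gives $\operatorname{HK}(S) = \operatorname{HK}(I)$.

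The principal obstacle is controlling this $\Tor$ contribution: a priori it could grow like $q^d$ and wipe out the equality. The rescue is the elementary bound $\mu_S(I_S^{[q]}) \leq n$, reflecting the fact that $I_S^{[q]}$ is generated by the $n$ fixed elements $f_1^q, \ldots, f_n^q$. Combined with Theorem \ref{reductionideal} and Theorem \ref{moduleartinianirrational}, this construction produces local noetherian domains with irrational Hilbert-Kunz multiplicity, establishing Theorem \ref{hilbertkunzirrational}.
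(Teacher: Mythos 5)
Your approach is genuinely different from the paper's. The paper builds $S$ from the subalgebra $K[f_1,\ldots,f_n]$, localizes at $(f_1,\ldots,f_n)$, completes, and passes to the image in $\hat{R}$, so that the maximal ideal of $S$ is \emph{tautologically} generated by the images of the $f_i$; it then concludes immediately with the Watanabe--Yoshida colength formula for the module-finite extension $S \subseteq \hat{R}$. You instead form $S = k + I$ directly inside $\hat{R}$ and compare lengths by a Tor argument. The structural claims (locality of $S$ with $\m_S = I$, module-finiteness of $R$ over $S$, Eakin, $\dim S = \dim R$, $S$ a domain since $\hat{R}$ is one by normality and excellence) and the Tor computation showing $\lim_e \lg_S\bigl(S/(f_1^q,\ldots,f_n^q)S\bigr)/q^{\dim R} = \operatorname{HK}(I,R)$ are all sound.

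There is, however, a gap at the final step: you define $I_S^{[q]} := (f_1^q, \ldots, f_n^q)S$ and identify $\lim \lg_S(S/I_S^{[q]})/q^{\dim R}$ with $\operatorname{HK}(S) = \operatorname{HK}(\m_S,S)$, which tacitly assumes $\m_S^{[q]} = I_S^{[q]}$. But $\m_S = I$ is \emph{not} in general generated by $f_1,\ldots,f_n$ as an $S$-ideal: with $R = k[[x]]$ and $I=(x^2)$ one gets $S = k[[x^2,x^3]]$ and $\m_S = (x^2,x^3)S \supsetneq (x^2)S$. As an $S$-ideal, $\m_S$ is generated by the products $\{f_i r_j\}$, so $\m_S^{[q]} = (f_i^q r_j^q : i,j)S$, a priori strictly larger than $(f_1^q,\ldots,f_n^q)S$. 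The fix is short and keeps your argument self-contained: choose $N$ with $\m^N \subseteq I$. Since $k$ is a coefficient field of the artinian local ring $R/I$ and Frobenius is additive in characteristic $p$, writing $\bar{r}_j = c_j + \bar{m}_j$ with $c_j \in k$ and $\bar{m}_j \in \m/I$ gives, for every $q = p^e \geq N$, that $\bar{r}_j^{\,q} = c_j^q + \bar{m}_j^{\,q} = c_j^q \in k$, i.e.\ $r_j^q \in k + I = S$. Hence $f_i^q r_j^q \in (f_i^q)S$ and $\m_S^{[q]} = (f_1^q, \ldots, f_n^q)S$ for all such $q$, so the limits agree. With this addition your proof is correct, and it trades the Watanabe--Yoshida citation for a direct length computation; the paper's construction simply sidesteps the whole issue by forcing $\m_S$ to be $n$-generated from the outset.
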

\begin{proof}
Let $I=(f_1, \ldots , f_n)$ and consider the subalgebra $T'=K[f_1, \ldots, f_n] \subseteq R$.
Let 
$ T =  T'_{\m'} $ be the localization, where $\m' =  \m \cap T' = (f_1, \ldots, f_n) $.
Then the extended ideal is $\m' R = I$. 

Now we work with the completions of $T$ and of $R$. Note that the Hilbert-Kunz multiplicities are not changed by completions.
The completion $\hat{R}$ of $R$ is again a domain by the normality and excellence assumption (see \cite[Corollaire 7.6.2]{ega4.2}),
and we get a ring homomorphism
$\hat{T} \rightarrow \hat{R}$.
Let ${\mathfrak p}$ be its kernel and let $S= \hat{T}/{\mathfrak p}$.
Then $S \subseteq \hat{R}$
is a finite extension of domains by \cite[Theorem 30.6]{nagata} and the maximal ideal $\n$ of $S$ extends to $I \hat{R}$. 
By \cite[Theorem 2.7]{watanabeyoshidacolength}
we have $\operatorname{HK} (\n) = \frac{ \operatorname{HK} (I \hat{R} )  [\hat{R}/\m \hat{R} : S/ \n] }{   [Q(\hat{R}) :Q(S)]}$.
\end{proof}

\begin{thm}
\label{hilbertkunzirrational}
There exists a local noetherian domain whose Hilbert-Kunz multiplicity is an irrational number.
\end{thm}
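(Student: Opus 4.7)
The plan is to chain together the three main results Theorem \ref{moduleartinianirrational}, Theorem \ref{reductionideal}, and Theorem \ref{reductionmaximal}, which were prepared precisely with this conclusion in mind. There is no new geometry or cohomology to do: everything irrational has already been produced by Theorem \ref{moduleartinianirrational}, and the remaining two theorems are purely algebraic transfer mechanisms.

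First I would invoke Theorem \ref{moduleartinianirrational} to obtain a three-dimensional standard-graded hypersurface domain $R = K[X,Y,Z,W]/(F)$ (with $F$ of degree four and $K$ algebraically closed of sufficiently large positive characteristic $p$) together with a finitely generated artinian $R$-module $M$ whose Hilbert-Kunz multiplicity is irrational. Before passing to the reduction theorems I would localize at the graded maximal ideal $\m = (X,Y,Z,W)$; since $M$ is artinian with support $\{\m\}$, the localization leaves $M$ and its Hilbert-Kunz multiplicity unchanged. In passing I would observe that $R_\m$ satisfies all the hypotheses needed downstream: because $\Proj R$ is smooth and $R$ is a Cohen-Macaulay hypersurface, Serre's criterion $R_1 + S_2$ gives normality, and since $R$ is finitely generated over a field it and its localizations are excellent.

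Next I would feed $R_\m$ and $M$ into Theorem \ref{reductionideal} to produce, in some polynomial ring $R_\m[T_1, \ldots, T_n]$, an ideal $I$ primary to the ideal $\m R_\m + (T_1, \ldots, T_n)$ whose Hilbert-Kunz multiplicity is irrational. To put this into the form required by Theorem \ref{reductionmaximal}, I would then localize $R_\m[T_1, \ldots, T_n]$ at this maximal ideal; the Hilbert-Kunz multiplicity of the primary ideal $I$ is unaffected, and the resulting ring is still a local noetherian normal excellent domain containing $K$ of positive characteristic, since polynomial extensions and localizations at maximal ideals preserve each of these four properties.

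Finally I would apply Theorem \ref{reductionmaximal} to this local ring and to the extended ideal $I$ to construct a local noetherian domain $(S, \n_S)$ whose Hilbert-Kunz multiplicity equals that of $I$ up to multiplication by a nonzero rational number (namely $[\hat{R}/\m\hat{R}:S/\n_S]/[Q(\hat{R}):Q(S)]$ in the notation of the proof of that theorem). Irrationality is preserved by a nonzero rational factor, so $S$ has irrational Hilbert-Kunz multiplicity and is the sought-after example. I do not foresee any genuine obstacle in this step: the three theorems were engineered to compose, and the only verifications left are the routine persistence of normality, excellence, and the domain property under polynomial extension and localization at a maximal ideal.
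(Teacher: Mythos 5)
Your proposal is correct and follows essentially the same chain as the paper: Theorem \ref{moduleartinianirrational}, then Theorem \ref{reductionideal}, then a localization, then Theorem \ref{reductionmaximal}. The paper's one-line proof elides the localization at $R_+$ needed to put the graded ring into the local form required by Theorem \ref{reductionideal}, as well as the verifications of normality and excellence needed for Theorem \ref{reductionmaximal}; you supply both explicitly and correctly, so the content is the same, just more fully spelled out.
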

\begin{proof}
This follows from Theorem \ref{moduleartinianirrational}, Theorem \ref{reductionideal}
followed by localization at the irrelevant ideal
and
Theorem \ref{reductionmaximal}. 
\end{proof}


\bibliographystyle{amsalpha}
\bibliography{bibfile}

\end{document}